\newtheorem{lemma}{Lemma}[section]
\newtheorem{theorem}{Theorem}[section]
\newtheorem{proposition}{Proposition}[section]
\newtheorem{remark}{Remark}[section]
\numberwithin{equation}{section}
\newcommand{\beq}{\begin{equation}}
\newcommand{\eeq}{\end{equation}}
\newcommand{\ben}{\begin{eqnarray}}
\newcommand{\een}{\end{eqnarray}}
\newcommand{\beno}{\begin{eqnarray*}}
\newcommand{\eeno}{\end{eqnarray*}}
\begin{document}

\title[Stability for the 2-D Poiseuille flow]{Enhanced dissipation and Transition threshold for the 2-D plane Poiseuille flow via Resolvent Estimate}




\author[Shijin Ding, Zhilin Lin]{Shijin Ding, Zhilin Lin$^*$}

\address[S. Ding]{South China Research Center for Applied Mathematics and Interdisciplinary Studies, South China Normal University,
Guangzhou, 510631, China}

\address{School of Mathematical Sciences, South China Normal University,
Guangzhou, 510631, China}
\email{dingsj@scnu.edu.cn}

\address[Z. Lin]{School of Mathematical Sciences, South China Normal University,
Guangzhou, 510631, China}
\email{zllin@m.scnu.edu.cn}

\thanks{$^*$Corresponding author}
\thanks{{\it Keywords}: Enhanced dissipation, Poiseuille flow, Stability.}
\thanks{{\it AMS Subject Classification}: 76N10, 35Q30, 35R35}%
\date{\today}

\begin{abstract}
In this paper, we study the transition threshold problem for the 2-D Navier-Stokes equations around the Poiseuille flow $(1-y^2,0)$ in a finite channel with Navier-slip boundary condition. Based on the resolvent estimates for the linearized operator around the Poiseuille flow, we first establish the enhanced dissipation estimates for the linearized Navier-Stokes equations with a sharp decay rate $e^{-c\sqrt{\nu}t}$. As an application, we prove that if the initial perturbation of vortiticy satisfies
$$\|\omega_0\|_{L^2}\leq c_0\nu^{\frac{3}{4}},$$
for some small constant $c_0>0$ independent of the viscosity $\nu$, then the solution dose not transition away from the Poiseuille flow for any time.
\end{abstract}

\maketitle

\section{Introduction}

In this paper, we study the transition threshold problem for the 2-D incompressible Navier-Stokes equations in a channel $\Omega=\mathbb{T} \times I(I=(-1,1))$:
\begin{equation}\label{1.1}
\left \{
\begin{array}{lll}
\partial_t v-\nu \Delta v+(v\cdot \nabla)v +\nabla q=0,\\
\nabla \cdot v=0,\\
v|_{t=0}=v_0(x,y),
\end{array}
\right.
\end{equation}
where $\nu >0$ is the viscosity, $v(t;x,y)\in \mathbb{R}^2$ is the velocity fields and $q\in \mathbb{R}$ is the pressure.

It is well known that the Poiseuille flow $V=(1-y^2,0)$ is a steady solution of the Navier-Stokes equations in (\ref{1.1}) with a constant pressure gradient $\nabla Q \equiv (Q_0,0)$ for some constant $Q_0$. In order to study the stability of the Poiseuille flow, we introduce the perturbations $u=v-V$ and $P=q-Q$, which satisfy
\begin{equation}\label{1.2}
\left \{
\begin{array}{lll}
\partial_t u-\nu \Delta u+(u\cdot \nabla)u +(1-y^2)\partial_x u+\left(
\begin{array}{c} -2y u_2 \\ 0 \end{array}\right)+\nabla P =0,\\
\nabla \cdot u=0,\\
u|_{t=0}=u_0(x,y,z).
\end{array}
\right.
\end{equation}

To avoid the boundary layer effect, we will consider the Navier-slip boundary condition for the perturbation system \eqref{1.2}:
\begin{equation}\label{1.2a}
\omega =0\quad \mathrm{on}\ \ \{y=\pm1\}.
\end{equation}
Here $\omega=\partial_y u_1-\partial_x u_2$ is the vorticity.

Introduce the stream function $\Phi$ as follows
$$ u=\nabla^\bot \Phi=(\partial_y \Phi,-\partial_x \Phi),$$
then $\Delta \Phi=\omega$ and \eqref{1.2} with \eqref{1.2a} can be rewritten as
\begin{equation}\label{1.2d}
\left \{
\begin{array}{lll}
\partial_t \omega-\nu\Delta \omega+(1-y^2)\partial_x \omega+2\partial_x \Delta^{-1}\omega =\nabla\cdot(\omega u),\\
\Delta \Phi=\omega,\\
\omega(t;x,\pm1)=0,\\
\omega|_{t=0}=\omega_0=:\omega(0).
\end{array}
\right.
\end{equation}

The hydrodynamic stability at high Reynolds number (or small viscosity) has been an important topic since the Reynolds's famous experiment \cite{Rey}. The problem that how the laminar flows become unstable and transition into turbulence is a very important topic in this field, see \cite{Sch,Yaglom} for instance. It is well-known that some laminar flows such as plane Couette flow and pipe Poiseuille flow are linearly stable for any fixed Reynolds number (or viscosity) \cite{Rom,CWZ1}. However, the instability even turbulence would occur in experiments with small perturbations at high Reynolds number \cite{DHB}, which is contradictory with theoretical analysis. This phenomena is well known as Sommerfeld paradox and it is a longstanding problem in the theory of hydrodynamic stability, see \cite{Li} and the references therein.

There are lots of attempts from different points of view to resolve this paradox, such as \cite{Chapman,Li} and the references therein. Among these literatures, the resolution introduced by Kelvin \cite{Kelvin} is that the basin of attraction of laminar flows shrinks as the Reynolds number tends to infinity such that the flow could be nonlinearly unstable for small perturbations. With this resolution, a fundamental question firstly proposed by Trefethen et al. \cite{Tre}, formulated as the mathematical version in \cite{BGM4} is that\smallskip

\emph{Given a norm $\Vert \cdot\Vert_X$, determine a $\gamma=\gamma(X)$ such that
$$\Vert u_0\Vert_X \leq \nu^\gamma \Longrightarrow \ \ stability,$$
$$\Vert u_0\Vert_X \gg\nu^\gamma \Longrightarrow\ \ instability.$$}

\noindent Here the exponent $\gamma$ is referred to as the transition threshold in the applied literatures, and there are lots of applied works devoted to determining $\gamma$ for some important laminar flows such as Couette flow and Poiseuille flow \cite{LHR,Reddy,Yaglom}. \smallskip

Recently, the transition threshold problem for the Couette flow was studied in a series of important mathematical works \cite{BGM1,BGM2,BGM3,BMV,BVW,WZ}.
These works showed that the transition threshold $\gamma\le 1$ with either $X$ Gevrey class or Sobolev space for the 3-D Couette flow in the case without the boundary effect. Very recently, Chen, Li, Wei and Zhang \cite{CLWZ} studied the transition threshold problem for the 2-D Couette flow in a finite channel. Moreover, Chen, Wei and Zhang \cite{CWZ2} proved that the transition threshold $\gamma \leq 1$ still holds in the Sobolev space for the 3-D Couette flow in a finite channel with nonslip boundary condition for the perturbation.

For the 2-D Kolmogorov flow, Wei, Zhang and Zhao \cite{WZZ3} showed that the transition threshold is $\gamma \leq \frac{2}{3}+$ in $\mathbb{T}_{2\pi\delta}\times \mathbb{T}$ with $\delta<1$. For the 3-D Kolmogorov flow, Li, Wei and Zhang \cite{LWZ2} proved that the the transition threshold is $\gamma \leq \frac{7}{4}$, where the proof is based on the resolvent estimate method and wave operator method developed in \cite{LWZ1}. For the Poiseuille flow $(y^2,0)$, Coti Zelati et al. \cite{Coti} proved that the transition threshold is $\gamma \leq \frac{3}{4}+2\mu$ (for any $\mu>0$) in $\mathbb{T}\times \mathbb{R}$.

For general shear flows, the stability and transition problems would be very challenging, since the linearized operator is non selfadjoint and nonlocal. In the inviscid case $\nu=0$, the analysis for the 2D linearized problem is reduced to solving the Rayleigh equations \cite{WZZ1,WZZ2,WZZ3}. However, in the viscous problem with small $\nu>0$, the problem would be much difficult since one has to solve the Orr-Sommerfeld equation, which is a fourth-order ODE. For example, for the Orr-Sommerfeld equation around a non-monotone flow such as Poiseuille flow in this paper, one has to study the behavior near the critical point and handle nonlocal term carefully. There are some important progress on the enhanced dissipation for the 2-D linearized Navier-Stokes equations, see \cite{Beck,Gall,IMM,LWZ2,LX,WZZ3} for details. We also refer to the survey article \cite{BGM4} for more results and open problems.\smallskip

The first result of this paper is to establish the enhanced dissipation estimates for the linearized Navier-Stokes equations around the 2-D Poiseuille flow:
\begin{equation}\label{1.7}
\left \{
\begin{array}{lll}
(\partial_t+\mathscr{L}) \omega =0,\\
\omega(t;x,\pm1)=0,\\
\omega|_{t=0}=\omega_0,
\end{array}
\right.
\end{equation}
where
\begin{align}
\label{1.5}
&\mathscr{L} \omega=-\nu\Delta \omega+(1-y^2)\partial_x \omega+2\partial_x \Delta^{-1}\omega.
\end{align}

Let $\widehat{f}(t;k,y)$ denote the Fourier transform of the function $f(t;x,y)$ with respect to $x$. And define
$$\overline{f}(t;x,y):=\int_{\mathbb{T}}f\mathrm{d}x, \ f_{\not=}:=f-\overline{f}.$$

First, we have the following enhanced dissipation with sharp decay rate for linearized Navier-Stokes equations.
\begin{theorem}\label{thm:ED}
For any $0<\nu \leq 1$, if $\omega_0\in L^2$ with $\int_{\mathbb{T}}\omega_0(x,y)\mathrm{d}x=0,$ then the solution of \eqref{1.7} satisfies the following estimates
\begin{equation}\nonumber
\begin{aligned}
\|\omega(t)\|_{L^2}\leq C e^{-c\sqrt{\nu}t}\|\omega_0\|_{L^2}.
\end{aligned}
\end{equation}
\end{theorem}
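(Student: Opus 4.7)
The plan is to reduce the PDE decay estimate to a quantitative spectral bound on the generator via Fourier decomposition in $x$ combined with a resolvent-semigroup duality. Expanding $\omega(t,x,y) = \sum_{k \in \Z} \widehat{\omega}(t,k,y) e^{ikx}$ (the zero mode vanishes by the zero-mean hypothesis on $\omega_0$), \eqref{1.7} decouples into a family of one-dimensional evolutions $\pa_t \widehat{\omega} + \mathscr{L}_k \widehat{\omega} = 0$ on $L^2(I)$, where
\[
\mathscr{L}_k = -\nu(\pa_y^2 - k^2) + ik(1-y^2) + 2ik(\pa_y^2 - k^2)^{-1}
\]
with Dirichlet boundary conditions $\widehat{\omega}(\pm 1) = 0$. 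Each $\mathscr{L}_k$ is maximal accretive on $L^2(I)$, since $(\pa_y^2 - k^2)^{-1}$ is self-adjoint negative, so $\mathrm{Re}\langle \mathscr{L}_k u, u\rangle = \nu(\|\pa_y u\|_{L^2}^2 + k^2 \|u\|_{L^2}^2) \geq 0$. By the quantitative Gearhart--Pr\"uss theorem in Wei's form, setting $\Psi(\mathscr{L}_k) := \inf_{\la \in \R}\inf_{\|u\|_{L^2}=1}\|(\mathscr{L}_k - i\la)u\|_{L^2}$, one has $\|e^{-t\mathscr{L}_k}\|_{L^2 \to L^2} \leq C\,e^{-\Psi(\mathscr{L}_k) t}$. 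The theorem then follows by Plancherel from the uniform resolvent lower bound
\[
\Psi(\mathscr{L}_k) \geq c\sqrt{\nu|k|}, \qquad k \in \Z\setminus\{0\},
\]
the slowest rate $\sqrt{\nu}$ being attained at $|k|=1$.

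The entire weight of the argument thus sits on establishing this resolvent estimate. Writing $F = (\mathscr{L}_k - i\la)u$ and introducing the stream function $\varphi \in H^1_0(I)\cap H^2(I)$ via $u = (\pa_y^2 - k^2)\varphi$, the problem becomes the Orr--Sommerfeld equation
\[
-\nu(\pa_y^2 - k^2)^2 \varphi + ik\bigl[(1-y^2 - \tfrac{\la}{k})(\pa_y^2 - k^2)\varphi + 2\varphi\bigr] = F.
\]
I would split the spectral parameter space into regimes. When $\la/k \notin [0,1]$ there is no critical layer inside the channel, and testing against $u$ with a real/imaginary decomposition, combined with the Hardy-type bound $\|\varphi\|_{L^2} \lesssim |k|^{-2}\|u\|_{L^2}$ to absorb the nonlocal term, gives the required bound from viscous dissipation alone. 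When $|k| \gtrsim \nu^{-1/3}$ (so that $\nu k^2 \gtrsim \sqrt{\nu|k|}$) or $|\la|$ is very large, the pure viscous coercivity already suffices. The substantive regime is $1 \leq |k| \lesssim \nu^{-1/3}$ with $\la/k \in [0,1]$, where one or two genuine critical points $\pm y_c$ with $1 - y_c^2 = \la/k$ lie inside $I$ and coalesce to $y=0$ as $\la/k \to 1$.

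The main obstacle is precisely this critical regime, and especially the degeneracy at $y=0$ where $\pa_y(1-y^2)$ vanishes: this feature is what rules out the Couette-type rate $\nu^{1/3}$ and forces the slower $\nu^{1/2}$. My plan is to localize near each critical point and test against a weighted multiplier of the type $u/((1-y^2) - \la/k - i\de)$ with an artificial regularization scale $\de \sim (\nu/|k|)^{1/2}$ tuned so that transport and viscosity balance at the degenerate point, use monotonicity of $1-y^2$ on each of $[-1,0]$ and $[0,1]$ to run a monotone-shear style resolvent estimate on each half, and glue at $y=0$ by exploiting the quadratic vanishing $1 - y^2 - 1 = -y^2$ to extract a coercive quantity of order $\sqrt{\nu|k|}\|u\|_{L^2}^2$. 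This is broadly modeled on the Orr--Sommerfeld analyses for Kolmogorov flow in \cite{WZZ3} and for the flow $(y^2,0)$ in \cite{Coti}, but the bounded channel with Navier-slip boundary requires additional care for boundary contributions. The nonlocal term $2ik\varphi$ should be treated as a lower order perturbation via the elliptic gain in $\varphi$ and absorbed either directly or through a Schur-type bootstrap. Carrying out this delicate Orr--Sommerfeld analysis uniformly in $\la$ and $k$ through the degenerate critical layer is where I expect the bulk of the technical effort to go.
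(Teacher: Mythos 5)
Your top-level architecture coincides with the paper's: Fourier decomposition in $x$, m-accretivity of each $\mathscr{L}_k$, the quantitative Gearhart--Pr\"uss theorem of Wei, and the reduction of the whole theorem to the resolvent lower bound $\Psi(\mathscr{L}_k)\geq c\sqrt{\nu|k|}$. That bound, however, is the entire substance of the result (it is Proposition 3.1 and occupies all of Section 3 of the paper), and your treatment of it is a sketch whose one committed quantitative choice is wrong. At the degenerate point $y=0$ with $\lambda$ near $1$ (the paper's normalization; your $\la/k$), the profile $1-y^2-\lambda$ vanishes quadratically, so the transport--viscosity balance reads $\nu/\delta^2\sim|k|\delta^2$, i.e.\ $\delta\sim(\nu/|k|)^{1/4}$, not $(\nu/|k|)^{1/2}$ as you propose. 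This is not cosmetic: the coercivity extractable outside a layer of width $\delta$ is $|k|\inf|1-y^2-\lambda|\gtrsim|k|\delta^2$, which equals $\sqrt{\nu|k|}$ for $\delta=(\nu/|k|)^{1/4}$ but only $\nu$ (the bare heat rate) for your choice, so the scheme as stated cannot yield the claimed rate. (The paper in fact needs two scales: $\delta=(\nu/|k|)^{1/4}$ when the two critical points are within $O\bigl((\nu/|k|)^{1/4}\bigr)$ of each other, and $\delta^3(y_2-y_1)|k|=\nu$ when they are well separated, reflecting the linear vanishing at $\pm y_c$ away from the origin.)

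A second gap is the plan to absorb the nonlocal term $2ik\varphi$ as a ``lower order perturbation via the elliptic gain.'' With the multiplier $w/(1-y^2-\lambda)$ one is forced to control $\int|\varphi|^2/(1-y^2-\lambda)^2$, which the elliptic gain $\|\varphi\|_{L^2}\lesssim|k|^{-2}\|w\|_{L^2}$ does not dominate near the critical points. Inside the critical interval $(y_1,y_2)$ the paper handles this through a sign identity special to the Poiseuille profile,
\begin{equation*}
(1-y^2-\lambda)|w|^2+4\,\mathrm{Re}(\varphi\overline{w})+\frac{4|\varphi|^2}{1-y^2-\lambda}
=(1-y^2-\lambda)\Bigl|w+\frac{2\varphi}{1-y^2-\lambda}\Bigr|^2\geq 0\quad\text{on }(y_1,y_2),
\end{equation*}
combined with a weighted Hardy inequality and separate $L^\infty$ estimates on the harmonic correction $\varphi_2$ and on $w'$ near $y_1,y_2$. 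Without an ingredient of this type --- or your half-interval monotone gluing actually carried out, including the boundary contributions at $y=0$ and $y=\pm1$ --- the argument does not close. In short: the reduction to the resolvent estimate is correct and identical to the paper's, but the resolvent estimate itself is left open, and the critical-layer scale you commit to would have to be corrected before the plan could work.
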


As an application, the second result of this paper is to study the stability threshold for the 2-D Poiseuille flow, which is stated as follows.
\begin{theorem}\label{mainresult1}
There exist $\nu_0\in (0,1)$ and $c_0\in (0,1)$ independent of $\nu$ so that if $\nu\in (0,\nu_0)$ and $\omega_0\in L^2$ with
$$\Vert \omega_0\Vert_{L^2}\leq c_0\nu^{\frac{3}{4}},$$
then the solution $\omega$ of (\ref{1.2d}) is global in time with the bounds
\begin{equation}\nonumber
\begin{aligned}
\|\omega_{\not=}(t)\|_{L^2}\leq C e^{-c\sqrt{\nu}t}\|\omega_{\not=}(0)\|_{L^2}.
\end{aligned}
\end{equation}

\end{theorem}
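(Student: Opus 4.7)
The plan is a bootstrap argument that upgrades the linear enhanced dissipation of Theorem \ref{thm:ED} to the nonlinear setting. First decompose $\omega = \overline{\omega} + \omega_{\neq}$ into its $x$-zero mode and non-zero modes. Incompressibility together with the no-penetration boundary condition force $\overline{u_2}\equiv 0$, so the zero mode satisfies a 1-D heat equation with quadratic forcing
\begin{equation*}
\partial_t \overline{\omega} - \nu \partial_y^2 \overline{\omega} = -\partial_y \overline{u_{2,\neq}\,\omega_{\neq}},\qquad \overline{\omega}|_{y=\pm 1}=0,
\end{equation*}
while the non-zero mode obeys the forced linearized equation $\partial_t \omega_{\neq} + \mathscr{L}\omega_{\neq} = -[u\cdot\nabla\omega]_{\neq}$, whose right-hand side splits as $-\overline{u_1}\partial_x\omega_{\neq} - u_{2,\neq}\partial_y\overline{\omega} - [u_{\neq}\cdot\nabla\omega_{\neq}]_{\neq}$.

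On a maximal interval $[0,T^*]$ I would set the bootstrap hypotheses
\begin{equation*}
\|\omega_{\neq}(t)\|_{L^2} \leq 2C_0 e^{-\frac{c\sqrt{\nu}}{2}t}\|\omega_0\|_{L^2},\quad \|\overline{\omega}(t)\|_{L^2}\leq 2\|\omega_0\|_{L^2},\quad \nu\int_0^t\|\nabla\omega_{\neq}\|_{L^2}^2\,ds\leq 4\|\omega_0\|_{L^2}^2,
\end{equation*}
and close each with halved constant under $\|\omega_0\|_{L^2}\leq c_0\nu^{3/4}$ for $c_0$ sufficiently small. For the non-zero mode, apply Duhamel and insert Theorem \ref{thm:ED} inside the integral:
\begin{equation*}
\|\omega_{\neq}(t)\|_{L^2}\leq C e^{-c\sqrt{\nu}t}\|\omega_0\|_{L^2} + C\int_0^t e^{-c\sqrt{\nu}(t-s)}\|[u\cdot\nabla\omega]_{\neq}(s)\|_{L^2}\,ds.
\end{equation*}
Using the elliptic bound $\|u_{\neq}\|_{H^1}\lesssim \|\omega_{\neq}\|_{L^2}$ combined with Ladyzhenskaya's inequality $\|f\|_{L^4}^2\lesssim \|f\|_{L^2}\|\nabla f\|_{L^2}$, the self-interaction and $\overline{u_1}\partial_x\omega_{\neq}$ terms are estimated by $\|\omega_{\neq}\|_{L^2}^{3/2}\|\nabla\omega_{\neq}\|_{L^2}^{1/2}$-type factors which, after Cauchy--Schwarz in time against the enhanced-dissipation kernel and the bootstrapped gain $\int_0^t\|\nabla\omega_{\neq}\|_{L^2}^2\,ds\lesssim \nu^{-1}\|\omega_0\|_{L^2}^2$, yield contributions of order $c_0\|\omega_0\|_{L^2}e^{-c\sqrt{\nu}t/2}$. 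For the zero mode, the energy identity followed by Young's inequality gives
\begin{equation*}
\frac{d}{dt}\|\overline{\omega}\|_{L^2}^2 + \nu \|\partial_y\overline{\omega}\|_{L^2}^2 \lesssim \frac{1}{\nu}\|u_{2,\neq}\omega_{\neq}\|_{L^2}^2,
\end{equation*}
and anisotropic product estimates that exploit the mean-zero structure of $u_{2,\neq}$ in $x$ produce a time-integrated correction which at the threshold $\|\omega_0\|_{L^2}\sim c_0\nu^{3/4}$ reduces to $\lesssim c_0^2\|\omega_0\|_{L^2}^2$, closing the $\overline{\omega}$-bound and simultaneously controlling $\sqrt{\nu}\|\partial_y\overline{\omega}\|$ in space-time $L^2$.

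The main obstacle is the coupling term $u_{2,\neq}\partial_y\overline{\omega}$ appearing in the $\omega_{\neq}$ equation, since $\overline{\omega}$ has no enhanced dissipation and appears with a bare $y$-derivative. The remedy is to pair the enhanced-dissipation decay of $u_{2,\neq}$ (inherited from $\omega_{\neq}$ via elliptic regularity) with the parabolic $L^2_tL^2$ gain $\sqrt{\nu}\|\partial_y\overline{\omega}\|_{L^2_tL^2}\lesssim \|\omega_0\|_{L^2}$ dropped out of the zero-mode dissipation, and apply Cauchy--Schwarz in time against the kernel $e^{-c\sqrt{\nu}(t-s)}$ to convert the convolution into a $\nu^{-1/4}$ loss that is precisely absorbed by the $c_0\nu^{3/4}$ smallness of the initial data. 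A standard continuity-in-time argument then extends $T^*=\infty$ and delivers the stated exponential-in-$\sqrt{\nu}$ decay of $\omega_{\neq}$.
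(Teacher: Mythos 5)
Your overall architecture (mode decomposition, continuity/bootstrap, the elliptic bound $\|u_{\neq}\|_{L^\infty}\lesssim\|\omega_{\neq}\|_{L^2}$, and the arithmetic that closes at $\|\omega_0\|_{L^2}\leq c_0\nu^{3/4}$) matches the paper's in spirit, and your zero-mode treatment is fine (the paper simply uses the global energy bound $\|\omega\|_{L^\infty L^2}^2+\nu\|\nabla\omega\|_{L^2L^2}^2\leq\|\omega_0\|_{L^2}^2$ to control $\overline{\omega}$ and $\overline{u}$, which is simpler). The substantive difference is how the derivative in the nonlinearity is handled, and here your proposal has a gap.

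You apply Duhamel with the $L^2\to L^2$ semigroup bound to the nonlinearity in non-divergence form, which forces you to estimate $\|[u\cdot\nabla\omega]_{\neq}(s)\|_{L^2}$ pointwise in $s$. For the term $\overline{u_1}\,\partial_x\omega_{\neq}$ the velocity factor $\overline{u_1}$ carries no decay, so after Cauchy--Schwarz against the kernel $e^{-c\sqrt{\nu}(t-s)}$ the exponential weight must land on $\|\nabla\omega_{\neq}\|_{L^2}$; recovering the claimed output $c_0\|\omega_0\|_{L^2}e^{-c\sqrt{\nu}t/2}$ therefore requires the \emph{weighted} space-time bound $\nu\int_0^t e^{c\sqrt{\nu}s}\|\nabla\omega_{\neq}\|_{L^2}^2\,\mathrm{d}s\lesssim\|\omega_0\|_{L^2}^2$, whereas your third bootstrap hypothesis is unweighted. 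The weighted gradient bound cannot be extracted from Duhamel plus the semigroup decay; it comes from a weighted energy identity whose bad term $c'\sqrt{\nu}\int e^{2c'\sqrt{\nu}s}\|\omega_{\neq}\|_{L^2}^2\,\mathrm{d}s$ must be absorbed by the $L^2_t$-enhanced dissipation $\sqrt{\nu}\|e^{c'\sqrt{\nu}t}\omega_{\neq}\|_{L^2L^2}^2$ --- a quantity absent from your scheme. The paper sidesteps all of this by keeping the forcing in divergence form and proving the inhomogeneous space-time estimate
\begin{equation*}
\Vert \omega_{\not=}\Vert_{X_{c'}}^2\lesssim \Vert \omega_{\not=}(0)\Vert_{L^2}^2+\nu^{-1}\Vert e^{c'\sqrt{\nu}t} f_{\not=}\Vert_{L^2L^2}^2,\qquad f=-u\omega,
\end{equation*}
which gains the derivative structurally (the $\nabla\cdot$ is paid for by the $\nu\|\nabla\omega_{\neq}\|^2$ dissipation on the left) and only ever requires $\|u\omega\|_{L^2}$, never $\|u\cdot\nabla\omega\|_{L^2}$. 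To repair your argument you should either switch to the divergence form and prove that space-time estimate, or enlarge the bootstrap to include the two weighted space-time norms $\sqrt{\nu}\|e^{c'\sqrt{\nu}t}\omega_{\neq}\|_{L^2L^2}^2$ and $\nu\|e^{c'\sqrt{\nu}t}\nabla\omega_{\neq}\|_{L^2L^2}^2$ and close them by a weighted energy estimate; as written, the decay factor is asserted but not actually produced for the terms containing $\nabla\omega_{\neq}$ against non-decaying coefficients.
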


\begin{remark}\label{rk1}
Our main results, the Theorem \ref{thm:ED} and Theorem \ref{mainresult1}, give the enhanced dissipation with decay rate $e^{-\sqrt{\nu}t}$ and the transition threshold $\gamma\leq \frac{3}{4}$, respectively.
In \cite{Coti}, the enhanced dissipation with decay rate $e^{-\varepsilon_0\frac{\sqrt{\nu}}{1+|\log \nu|}t}$ and $\gamma\leq \frac{3}{4}+2\mu$ for any $\mu>0$ for the 2-D Poiseuille flow $(y^2,0)$ in the domain $\mathbb{T}\times \mathbb{R}$ were firstly obtained via the hypocoercivity method.

Compared with \cite{Coti}, our problem is to treat with the case in a channel $\mathbb{T}\times (-1,1)$ and our method is very different from that in \cite{Coti}.
Our proof is based on the resolvent estimates, which give the enhanced dissipation with decay rate $e^{-c\sqrt{\nu}t}$, and hence the transition threshold $\gamma\leq  \frac{3}{4}.$
\end{remark}
\begin{remark}\label{rk2}
The transition threshold $\gamma\leq \frac{3}{4}$ may not be sharp. To improve it, much substantial works are needed.

In addition, the transition threshold $\gamma\leq \frac{7}{4}$ for the 3-D Kolmogorov flow is obtained in \cite{LWZ2}. It was conjectured in \cite{Chapman} that the transition threshold $\gamma\leq \frac{3}{2}$ for the 3-D plane Poiseuille flow.
\end{remark}

Let us give some comments about the main results. Two main theorems give the linear enhanced dissipation and nonlinear stability transition threshold for 2-D Poiseuille flow. Our first result gives the enhanced dissipation with a sharp decay rate $e^{-c\sqrt{\nu}t}$, which is based on our careful resolvent estimates for the O-S equation. This is very different from the hypocoercivity method applied in \cite{Coti}.
We study the enhanced dissipation and transition threshold problem for the 2-D Poiseuille flow $(1-y^2,0)$ in a channel $\mathbb{T}\times (-1,1)$, which is very different from the case of $\mathbb{T}\times \mathbb{R}$ in \cite{Coti}.
The key point is to derive the sharp resolvent estimates for the O-S equations with boundaries.
Due to the interval $(-1,1)$, we will derive the resolvent estimate by several cases. The most challenges are resulted from the critical points of Poiseuille flow in the interval $(-1,1)$, in which more estimates near the points are needed. To overcome the difficulties, more careful estimates about some suitable multipliers near the critical points are needed.
Based on the careful and sharp resolvent estimates, the enhanced dissipation with sharp decay rate $e^{-c\sqrt{\nu}t}$ and transition threshold $\nu^\gamma$ for $\gamma\leq\frac{3}{4}$ are obtained.

Through this paper, we always suppose that $|k|\geq1$, and denote by $C$ a positive constant independent of $\nu,k,\lambda$. In addition, we use the notation $a \sim b$ for $C^{-1}b \leq a \leq Cb$ and $a \lesssim b$ for $a \leq Cb$, in which $C>0$ is an absolute constant.

\section{Sketch and key points of the proof}\label{sec2}
Our proof is based on the careful resolvent estimates. It is necessary to give some analysis on our problem and method.

We first study the resolvent estimates of the following
linearized operator
\beno
&-\nu(\partial_y^2-|k|^2)w+ik\big[(1-y^2-\lambda)w+2\varphi\big]=F,
\eeno
where $(\partial_y^2-|k|^2)\varphi=w, \,\varphi(\pm 1)=0$.
In Proposition \ref{prop:res}, we will establish the following resolvent estimate:
\beno
\nu^{\frac{1}{2}}|k|^{\frac{1}{2}}\Vert w \Vert_{L^2}\leq C\Vert F\Vert_{L^2}.
\eeno
The proof follows the resolvent estimate method introduced in \cite{LWZ2}.

The key point in our problem is to derive the resolvent estimates for O-S equation with boundaries. The estimates will be concluded by several cases: (i) $\lambda \geq 1$; (ii) $\lambda \leq 0$; (iii) $\lambda \in (0,1)$. The most difficult case is (iii), since one has to deal with the estimates near the points $y_i$ with $1-y_i^2=\lambda (i=1,2)$. Therefore, we should establish the estimates on the interval $(y_1,y_2)$ and outside interval, respectively. To this end, more careful estimates about some suitable multipliers near the points $y_i$ should be needed. See Section \ref{secresolvent} for details.

Taking the Fourier transformation in $x$, the linearize operators $\mathscr{L}$ can be rewritten as
\beno
&\widehat{\mathscr{L}}=-\nu(\partial_y^2-|k|^2)+ik(1-y^2)+2ik(\partial_y^2-|k|^2)^{-1},
\eeno
Unlike the case of Kolmogorov flow \cite{LWZ2}, the operators $\widehat{\mathscr{L}}$ is symmetric and m-accretive in the sense
\begin{equation}\nonumber
\mathrm{Re} \big\langle \widehat{\mathscr{L}}f,f\big\rangle =\nu |k|^2\Vert f\Vert_{L^2}^2+\nu\Vert f' \Vert_{L^2}^2 \geq 0.
\end{equation}
We define the pseudospectral bound for an accretive operator $A$ by
$$\Psi(A):=\inf\big\{\Vert (A-i\lambda)s\Vert:s\in D(A),\lambda \in \mathbb{R},\Vert s\Vert=1\big\}.$$
Based on the resolvent estimates for $\widehat{\mathscr{L}}$, we can deduce that
\begin{equation}\nonumber
\begin{aligned}
\Psi(\widehat{\mathscr{L}}) \geq c\nu^{\frac{1}{2}}|k|^{\frac{1}{2}}+\nu |k|^2.
\end{aligned}
\end{equation}
Then we can easily obtain the sharp semigroup bound:
$$\Vert e^{-t\mathscr{L}}g_{\not=}\Vert_{L^2}\leq Ce^{-c\sqrt{\nu}t-\nu t}\Vert g_{\not=}\Vert_{L^2}.$$

To derive the enhanced dissipation estimates for the linearized Navier-Stokes equations, we consider the following equation
\begin{equation}\nonumber
\left \{
\begin{array}{lll}
(\partial_t+\widehat{\mathscr{L}})\widehat{\omega}=0,\\
\widehat{\omega}(t;x,\pm 1)=0,\\
\widehat{\omega}|_{t=0}=\widehat{\omega}_0.
\end{array}
\right.
\end{equation}
Based on the sharp semigroup bound, the enhanced dissipation is obtained.

To study nonlinear problem, we first establish the space-time estimates for the following coupled system
\begin{equation}\nonumber
\left \{
\begin{array}{lll}
\big(\partial_t+\mathscr{L}\big)\omega=\nabla\cdot f,\\
\omega(t;x,\pm 1)=0,\\
\omega|_{t=0}=\omega_0=:\omega(0).
\end{array}
\right.
\end{equation}
More precisely, the following estimate will be established
\begin{equation}\nonumber
\begin{array}{lll}
\Vert  \omega_{\not=}\Vert_{X_{c'}}^2\lesssim \Vert \omega_{\not=}(0)\Vert_{L^2}^2+\nu^{-1}\Vert e^{c'\sqrt{\nu}t} f_{\not=}\Vert_{L^2L^2}^2,
\end{array}
\end{equation}
where
$$\Vert \omega_{\not=}\Vert_{X_{c'}}^2=\Vert e^{c'\sqrt{\nu}t}\omega_{\not=}\Vert_{L^\infty L^2}^2+\sqrt{\nu}\Vert e^{c'\sqrt{\nu}t}\omega_{\not=}\Vert_{L^2 L^2}^2+\nu\Vert e^{c'\sqrt{\nu}t}\nabla \omega_{\not=}\Vert_{L^2 L^2}^2.$$

Let us remark that the first two parts in the norm $\Vert \cdot \Vert_{X_{c'}}$, correspond to the enhanced dissipation, whose decay rate $e^{-c\sqrt{\nu}t}$ is resulted from the sharp resolvent estimates, and the last part is due to the combined effect of heat diffusion and enhanced dissipation.

Then we will obtain nonlinear stability from a continuity argument.

\section{Resolvent estimates for Orr-Sommerfeld equation}\label{secresolvent}

In this section, we will establish the resolvent estimates of the Orr-Sommerfeld equation
\begin{equation}\label{eq:res}
\left \{
\begin{array}{lll}
-\nu(\partial_y^2-|k|^2)w+ik\big[(1-y^2-\lambda)w+2\varphi\big]=F,\\
w(\pm 1)=0,
\end{array}
\right.
\end{equation}
where
\beno
(\partial_y^2-|k|^2)\varphi=w,\quad \varphi(\pm 1)=0.
\eeno

The main result about the resolvent estimate is stated as the following proposition.
\begin{proposition}\label{prop:res}
Let $w \in H^2(I)$ be the solution of (\ref{eq:res}) with $\lambda \in \mathbb{R}$ and $F \in L^2(I)$. Then there holds that
\begin{equation}\nonumber
\begin{aligned}
\nu^{\frac{1}{2}}|k|^{\frac{1}{2}} \Vert w \Vert_{L^2}
\leq C \Vert F \Vert_{L^2}.
\end{aligned}
\end{equation}
\end{proposition}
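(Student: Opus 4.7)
The plan is to derive the resolvent bound by pairing \eqref{eq:res} with carefully chosen multipliers and combining the real and imaginary parts of the resulting identities, with the choice of multiplier adapted to the location of the critical set $\{y:1-y^2=\lambda\}$. I start from the basic identity obtained by testing against $\bar w$ in $L^2(I)$: the real part gives
\[
\nu\bigl(\|w'\|_{L^2}^2+|k|^2\|w\|_{L^2}^2\bigr)=\mathrm{Re}\,\langle F,w\rangle,
\]
since the contribution of $2ik\varphi$ is purely imaginary once $\int_I\varphi\bar w=-\|\varphi'\|_{L^2}^2-|k|^2\|\varphi\|_{L^2}^2\in\mathbb{R}$ is used (via $w=(\partial_y^2-|k|^2)\varphi$ and $\varphi(\pm 1)=0$); the imaginary part produces the oscillatory quantity $k\int_I(1-y^2-\lambda)|w|^2\,dy$ that will drive the enhanced dissipation. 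Throughout the argument the nonlocal term $2ik\varphi$ is absorbed as a perturbation via the elliptic gains $\|\varphi\|_{L^2}\le|k|^{-2}\|w\|_{L^2}$ and $\|\varphi'\|_{L^2}\le|k|^{-1}\|w\|_{L^2}$, which hold because $\varphi$ satisfies Dirichlet boundary conditions and $|k|\ge 1$.

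Next I split into the three regimes according to the sign structure of $U(y)-\lambda=1-y^2-\lambda$. In Case (i), $\lambda\ge 1$, one has $\lambda-U(y)\ge y^2$ on $I$, so pairing the equation with a multiplier proportional to $(\lambda-U)\bar w$ and taking imaginary parts produces $|k|\int_I(\lambda-U)^2|w|^2\,dy$, which, combined with the basic dissipation bound and an interpolation in $y$ near the origin, yields the target estimate. In Case (ii), $\lambda\le 0$, $U-\lambda\ge 1-y^2\ge 0$ vanishes only at $y=\pm 1$, and the same strategy works, the Dirichlet condition $w(\pm 1)=0$ together with a Hardy-type inequality taking care of the boundary layer of thickness $(\nu/|k|)^{1/2}$. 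In Case (iii), $\lambda\in(0,1)$, the critical points $y_\pm=\pm\sqrt{1-\lambda}$ lie inside $I$; following the paper's suggestion I decompose $I=(-1,y_-)\cup(y_-,y_+)\cup(y_+,1)$, use the multipliers of Cases (i)/(ii) on the parts of each subinterval where $|U-\lambda|$ is bounded below by $\delta^2$ with $\delta=(\nu/|k|)^{1/4}$, and switch to a refined multiplier on the neighborhoods $\{|y-y_\pm|\lesssim\delta\}$, tailored to the local linearization $U-\lambda\approx -2y_\pm(y-y_\pm)$ (and to $U-1\approx -y^2$ when $\lambda$ is near $1$ and the two critical points collide at the origin).

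The technical crux is Case (iii), and in particular the regime $\lambda\uparrow 1$. A naive Airy analysis at a single non-degenerate critical point would give the stronger rate $\nu^{1/3}|k|^{1/3}\|w\|_{L^2}\lesssim\|F\|_{L^2}$, but the associated constants blow up as $y_+-y_-\to 0$; the weaker $\nu^{1/2}|k|^{1/2}$ rate reflects exactly the degenerate boundary layer of thickness $(\nu/|k|)^{1/4}$ generated by the quadratic tangency of $U$ to $1$ at the origin. To obtain a uniform bound I plan to use a regularized multiplier of the form $\chi(y)\,\bar w/(U-\lambda+i\eta)$, where $\chi$ is a smooth cut-off localizing to a $\delta$-neighborhood of the critical set and $\eta\sim\nu^{1/2}|k|^{1/2}$ is chosen so that the effective boundary-layer size matches $\delta$. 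Integration by parts against this multiplier yields a positive contribution $\sim|k|\,\eta\int_{\text{local}}|w|^2/(|U-\lambda|^2+\eta^2)\,dy$, which after summation is equivalent to $\nu^{1/2}|k|^{1/2}\|w\|_{L^2}^2$, together with interior boundary terms at $y_\pm$ that are absorbed using the $H^1$ control of $w$ supplied by the basic dissipation bound, and error terms from $\chi'$ and from the nonlocal $2ik\varphi$ piece that are controlled by the $|k|^{-1}$ gains noted above.

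The main obstacle I anticipate is making the choice of multiplier and regularization parameter $\eta$ genuinely uniform across Case (iii), so that all pairings of multipliers across the three subregions fit together without introducing a loss that depends on $\mathrm{dist}(\lambda,1)$ or on the separation $y_+-y_-$. Once the three cases are combined and the resulting lower-order terms absorbed via Young's inequality, one reads off $\nu^{1/2}|k|^{1/2}\|w\|_{L^2}\le C\|F\|_{L^2}$ with $C$ independent of $\nu$, $k$, and $\lambda$, which is the desired bound.
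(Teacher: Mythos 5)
Your overall architecture matches the paper's: test against $\bar w$ for the basic dissipation identity $\nu(\|w'\|_{L^2}^2+|k|^2\|w\|_{L^2}^2)=\mathrm{Re}\langle F,w\rangle$ and the imaginary-part identity, split into the three regimes of $\lambda$, use the singular multiplier $\bar w/(1-y^2-\lambda)$ localized away from the critical set, take the layer width $\delta=(\nu/|k|)^{1/4}$, and recognize that the collision of the two critical points as $\lambda\uparrow 1$ is what degrades the rate from $\nu^{1/3}$ to $\nu^{1/2}$. Cases $\lambda\ge 1$ and $\lambda\le 0$ go through essentially as you describe.

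However, there is a genuine gap in Case (iii), and it is precisely where the paper's main technical effort lies: the nonlocal term $2ik\varphi$ cannot be ``absorbed as a perturbation via the elliptic gains $\|\varphi\|_{L^2}\le|k|^{-2}\|w\|_{L^2}$.'' Pairing $2ik\varphi$ with your regularized multiplier $\chi\bar w/(U-\lambda+i\eta)$ produces, in the imaginary part, a term of size $|k|\int\chi\,|\varphi|^2/(|U-\lambda|^2+\eta^2)\,\mathrm{d}y$; since $\varphi$ has no reason to vanish at the critical points $y_\pm$, this term is of order $|k|\,|\varphi(y_\pm)|^2\delta/\eta^2$, and a check of exponents (using the best a priori bound $|\varphi(y_\pm)|\lesssim|k|^{-3/2}\|w\|_{L^2}$ from Sobolev embedding) shows it is \emph{not} dominated by the coercive contribution $\nu^{1/2}|k|^{1/2}\|w\|_{L^2}^2$ for $k=1$ and small $\nu$. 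The paper handles this by (a) the structural energy inequality $(1-y^2-\lambda)|w|^2+4\varphi w+4|\varphi|^2/(1-y^2-\lambda)\ge 0$ (Lemma \ref{lem:energy}), which exploits $U''\equiv-2$ to convert $\langle\varphi,w\rangle$ into a weighted $\dot H^1$ norm of $\varphi/(1-y^2-\lambda)$; (b) the decomposition $\varphi=\varphi_1+\varphi_2$ with $\varphi_1(y_1)=\varphi_1(y_2)=0$, so that the Hardy inequality (Lemma \ref{lem:hardy}) applies to $\varphi_1$ and the harmonic part $\varphi_2$ is estimated separately in $L^\infty$ by an averaging argument (Lemma \ref{lem:varphi-Linfty}); and (c) a dedicated two-case estimate of $|\varphi_1(0)|^2/(y_2-y_1)^3$ according to whether $|k|(y_2-y_1)\gtrless 1$. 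None of this is replaceable by the $|k|^{-1}$ gains you invoke, so as written the plan does not close in the regime $\lambda\in(0,1)$.
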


\subsection{Some basic lemmas}

The following key energy inequality \eqref{eq:energy-key} takes advantage of the special structure of Poiseuille flows.

\begin{lemma}\label{lem:energy}
Let $-1 \leq y_1 \leq 0 \leq y_2 \leq 1$. If $(\varphi,w)$ solves
\begin{equation}\label{eq:stream-cut}
\left \{
\begin{array}{lll}
\big(\partial_y^2-|k|^2\big)\varphi=w, \ y \in[y_1,y_2],\\
\varphi(y_1)=\varphi(y_2)=0,
\end{array}
\right.
\end{equation}
then we have
\begin{equation}\label{eq:energy-cut1}
-\big\langle \varphi,w\chi_{(y_1,y_2)}\big\rangle=\Vert \varphi' \Vert_{L^2(y_1,y_2)}^2+|k|^2\Vert \varphi \Vert_{L^2(y_1,y_2)}^2,
\end{equation}
and
\begin{equation}\label{eq:energy-key}
\begin{aligned}
2\left(\int_{y_1}^{y_2} (1-y^2-\lambda)^2\left|\left(\frac{\varphi}{1-y^2-\lambda}\right)'\right|^2\mathrm{d}y+\int_{y_1}^{y_2} k^2 |\varphi|^2\mathrm{d}y\right) \\
\leq \int_{y_1}^{y_2} (1-y^2-\lambda)|w|^2\mathrm{d}y+\big\langle 2 \varphi,w \chi_{(y_1,y_2)}\big\rangle.
\end{aligned}
\end{equation}
If $(\varphi,w)$ solves
\begin{equation}\nonumber
\left \{
\begin{array}{lll}
\big(\partial_y^2-|k|^2\big)\varphi=w, \quad y \in [-1,1]\setminus (y_1,y_2),\\
\varphi(y_1)=\varphi(y_2)=\varphi(\pm 1)=0,
\end{array}
\right.
\end{equation}
then we have
\begin{equation}\label{eq:energy-cut2}
-\big\langle \varphi,w\chi_{\big([-1,1]\setminus (y_1,y_2)\big)}\big\rangle=\Vert \varphi' \Vert_{L^2\big([-1,1]\setminus (y_1,y_2)\big)}^2+|k|^2\Vert \varphi \Vert_{L^2\big([-1,1]\setminus (y_1,y_2)\big)}^2.
\end{equation}
\end{lemma}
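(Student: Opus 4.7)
The plan is to prove the two Dirichlet identities (2.4) and (2.6) by direct integration by parts, and then derive the key inequality (2.5) from (2.4) together with an identity for $\int U^2|(\varphi/U)'|^2$ followed by a completion of squares. Throughout, write $U(y):=1-y^2-\lambda$.

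For the two identities, I would pair $w = (\partial_y^2-|k|^2)\varphi$ with $\bar\varphi$ on the relevant interval and integrate by parts once in $y$. For (2.4), the only boundary contribution $[\bar\varphi\,\varphi']_{y_1}^{y_2}$ vanishes by $\varphi(y_1)=\varphi(y_2)=0$, which immediately yields the stated identity. For (2.6), the same calculation on $[-1,1]\setminus(y_1,y_2)$ produces boundary contributions at $\{y_1,y_2,\pm 1\}$, all of which vanish by the Dirichlet hypothesis $\varphi(y_1)=\varphi(y_2)=\varphi(\pm 1)=0$.

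The substantive content is (2.5). The regime of interest is $\lambda\in(0,1)$ with $y_{1,2}=\mp\sqrt{1-\lambda}$, so that $U\geq 0$ on $(y_1,y_2)$ and $U(y_i)=0$ with $U'(y_i)\neq 0$. First I would expand algebraically
\begin{equation*}
U^2\left|\left(\frac{\varphi}{U}\right)'\right|^2 = |\varphi'|^2 - \frac{U'}{U}\bigl(|\varphi|^2\bigr)' + \frac{(U')^2}{U^2}|\varphi|^2.
\end{equation*}
Integrating the middle term by parts on $(y_1,y_2)$ and using $U''\equiv -2$, so that $(U'/U)' = -2/U - (U'/U)^2$, I obtain, after the boundary terms $[(U'/U)|\varphi|^2]_{y_1}^{y_2}$ are checked to vanish,
\begin{equation*}
\int_{y_1}^{y_2} U^2\left|\left(\frac{\varphi}{U}\right)'\right|^2 \mathrm{d}y = \int_{y_1}^{y_2}|\varphi'|^2\,\mathrm{d}y - 2\int_{y_1}^{y_2}\frac{|\varphi|^2}{U}\mathrm{d}y.
\end{equation*}
Doubling this identity, adding $2k^2\int|\varphi|^2\,\mathrm{d}y$, and invoking (2.4) rewrites the left-hand side of (2.5) as $-2\langle\varphi, w\chi_{(y_1,y_2)}\rangle - 4\int_{y_1}^{y_2}|\varphi|^2/U\,\mathrm{d}y$. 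Consequently, (2.5) collapses to the bare completion of squares
\begin{equation*}
\int_{y_1}^{y_2} U|w|^2\,\mathrm{d}y + 4\,\langle\varphi,w\chi_{(y_1,y_2)}\rangle + 4\int_{y_1}^{y_2}\frac{|\varphi|^2}{U}\mathrm{d}y = \int_{y_1}^{y_2}\left|U^{1/2}w + 2U^{-1/2}\varphi\right|^2\mathrm{d}y \geq 0,
\end{equation*}
where the pairing is real by the computation in (2.4).

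The main obstacle I expect is justifying the integrability of $|\varphi|^2/U$ and the vanishing of the boundary terms at the critical points $y_i$, where both $\varphi$ and $U$ vanish. Since $\varphi\in H^2(I)$ with $\varphi(y_i)=0$, Taylor expansion gives $|\varphi(y)|^2\lesssim (y-y_i)^2$ in a neighborhood of $y_i$, while $U(y)\sim U'(y_i)(y-y_i)$ with $U'(y_i)=\mp 2\sqrt{1-\lambda}\neq 0$. Hence $|\varphi|^2/U = O(|y-y_i|)$ is integrable and $(U'/U)|\varphi|^2 = O(|y-y_i|)\to 0$, so every manipulation above is legitimate. This careful local control at the critical points of the Poiseuille profile is precisely the point on which the sharp resolvent bound $\nu^{1/2}|k|^{1/2}\Vert w\Vert_{L^2}\lesssim\Vert F\Vert_{L^2}$ (and therefore the sharp enhanced-dissipation rate $e^{-c\sqrt{\nu}t}$) will rely.
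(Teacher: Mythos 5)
Your proof is correct and follows essentially the same route as the paper: both rest on the integration-by-parts identity $\int_{y_1}^{y_2} U^2\bigl|(\varphi/U)'\bigr|^2\mathrm{d}y=\int_{y_1}^{y_2}\bigl(|\varphi'|^2-2|\varphi|^2/U\bigr)\mathrm{d}y$ with $U=1-y^2-\lambda$, the pointwise completion of squares $U|w|^2+4\,\mathrm{Re}(\varphi\bar w)+4|\varphi|^2/U\ge 0$, and the identity \eqref{eq:energy-cut1}. The only differences are presentational — you reduce \eqref{eq:energy-key} to a single pointwise nonnegativity rather than chaining inequalities, and you supply the verification of the boundary terms and of the integrability of $|\varphi|^2/U$ at the critical points, which the paper leaves implicit.
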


\begin{proof}
(\ref{eq:energy-cut1}) and (\ref{eq:energy-cut2}) are obvious.
We get by integration by parts that
$$\int_{y_1}^{y_2}\left( |\varphi'|^2-\frac{2}{1-y^2-\lambda}|\varphi|^2\right) \mathrm{d}y=\int_{y_1}^{y_2} (1-y^2-\lambda)^2\left|\left(\frac{\varphi}{(1-y^2-\lambda)}\right)'\right|^2\mathrm{d}y,$$
from which and the fact that
$$(1-y^2-\lambda)|w|^2+4 \varphi w +4\frac{|\varphi|^2}{1-y^2-\lambda} \geq 0,$$
we infer that for $y\in (y_1,y_2)$,
\begin{equation}\nonumber
\begin{aligned}
&\int_{y_1}^{y_2}(1-y^2-\lambda)|w|^2\mathrm{d}y+\big\langle 2 \varphi,w\chi_{(y_1,y_2)}\big\rangle \\
&\geq -\big\langle 2 \varphi,w\chi_{(y_1,y_2)}\big\rangle -4\int_{y_1}^{y_2}\frac{|\varphi|^2}{1-y^2-\lambda}\mathrm{d}y\\
&=2\left(\int_{y_1}^{y_2}(|\varphi'|^2+|k|^2|\varphi|^2)\mathrm{d}y\right)-4\int_{y_1}^{y_2}\frac{|\varphi|^2}{1-y^2-\lambda}\mathrm{d}y\\
&\geq 2\int_{y_1}^{y_2}\Big(|\varphi'|^2-\frac{2}{1-y^2-\lambda}|\varphi|^2\Big)\mathrm{d}y+2 \int_{y_1}^{y_2} |k|^2|\varphi|^2\mathrm{d}y\\
&=2 \int_{y_1}^{y_2} (1-y^2-\lambda)^2\left|\left(\frac{\varphi}{(1-y^2-\lambda)}\right)'\right|^2\mathrm{d}y+2 \int_{y_1}^{y_2} |k|^2|\varphi|^2\mathrm{d}y,
\end{aligned}
\end{equation}
which gives (\ref{eq:energy-key}).
\end{proof}

In some sense, the inequality means that the plane Poiseuille flow is a stable steady solution of the Euler equations. \smallskip

The following lemma is a Hardy type inequality.

\begin{lemma}\label{lem:hardy}
For $\lambda\in [0,1]$, let $-1 \leq y_1 \leq 0\leq y_2 \leq 1$ so that $\lambda=1-y_1^2=1-y_2^2$. If $(\varphi,w)$ solves (\ref{eq:stream-cut}), then we have
\begin{equation}\nonumber
\begin{aligned}
&\int_{y_1}^{y_2}\frac{|\varphi|^2}{(1-y^2-\lambda)^2}\mathrm{d}y\\
&\lesssim \frac{1}{(y_2-y_1)^2}\int_{y_1}^{y_2}(1-y^2-\lambda)^2\left|\left(\frac{\varphi}{(1-y^2-\lambda)}\right)'\right|^2\mathrm{d}y+\frac{|\varphi(0)|^2}{(y_2-y_1)^3}.
\end{aligned}
\end{equation}
\end{lemma}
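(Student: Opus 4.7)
The natural reduction is to pass to the quotient $\psi := \varphi/\rho$, where $\rho(y) := 1-y^2-\la = (y-y_1)(y_2-y)$, and to set $L := y_2-y_1 = 2\sqrt{1-\la}$ (the case $\la = 1$ being vacuous). Since $\rho$ has simple zeros at $y_1, y_2$ and $\varphi$ also vanishes there, L'H\^opital's rule shows that $\psi$ extends continuously to $[y_1,y_2]$, with for instance $\psi(y_2) = -\varphi'(y_2)/L$, so $\psi$ remains bounded near the singular endpoints of $\rho$. Using $\rho(0) = 1-\la = L^2/4$ and $|\varphi(0)|^2 = (L^2/4)^2 \psi(0)^2$, the inequality to be shown is equivalent to the weighted Hardy estimate
\begin{equation*}
\int_{y_1}^{y_2} \psi^2 \, \mathrm{d}y \;\lesssim\; L\,\psi(0)^2 + \frac{1}{L^2} \int_{y_1}^{y_2} \rho^2 (\psi')^2 \, \mathrm{d}y.
\end{equation*}

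I would prove this by splitting at $y=0$ and handling the two halves symmetrically; consider $(0,y_2)$. Integrating by parts against the trivial antiderivative $y - y_2$ of $1$ gives
\begin{equation*}
\int_0^{y_2} \psi^2 \, \mathrm{d}y = \bigl[(y-y_2)\psi^2\bigr]_0^{y_2} - 2 \int_0^{y_2} (y-y_2) \psi \psi' \, \mathrm{d}y = y_2 \psi(0)^2 - 2 \int_0^{y_2} (y-y_2) \psi \psi' \, \mathrm{d}y,
\end{equation*}
the boundary contribution at $y=y_2$ vanishing because $\psi$ is bounded there. Cauchy--Schwarz and Young's inequality on the remaining cross term absorb half of the left-hand side and produce $4 \int_0^{y_2} (y_2-y)^2 (\psi')^2 \, \mathrm{d}y$. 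Finally, the pointwise lower bound $\rho(y) = (y_2-y)(y+y_2) \geq (L/2)(y_2-y)$ on $[0,y_2]$ (using $y + y_2 \geq y_2 = L/2$) converts this last quantity into $\tfrac{16}{L^2} \int_0^{y_2} \rho^2 (\psi')^2 \, \mathrm{d}y$. Adding the analogous estimate on $(y_1, 0)$ and translating back to $\varphi$ yields the claim.

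The only real delicacy in this argument is the justification of the vanishing boundary term at the singular endpoints $y = y_{1,2}$ of $\rho$, which is what allows the integration by parts to go through cleanly. This relies essentially on the Dirichlet conditions $\varphi(y_1) = \varphi(y_2) = 0$, which exactly match the simple zeros of $\rho$ and keep $\psi$ bounded up to the boundary; without them the weighted Hardy inequality would fail. Apart from this mild technical point, the argument is a one-shot integration by parts followed by Cauchy--Schwarz, Young's inequality, and a pointwise weight comparison.
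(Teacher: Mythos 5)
Your argument is correct and complete: the substitution $\psi=\varphi/(1-y^2-\lambda)$ (well defined up to the endpoints thanks to the Dirichlet conditions matching the simple zeros of the weight), the split at $y=0$, the integration by parts against $y-y_2$ (resp.\ $y-y_1$), and the comparison $\rho(y)\ge \tfrac{L}{2}(y_2-y)$ on $[0,y_2]$ together give exactly the stated bound with explicit constants. The paper omits this proof entirely, deferring to Lemma 3.7 of [LWZ2]; your write-up is precisely the standard weighted-Hardy argument that reference uses, so it supplies the missing details rather than diverging from the paper's route.
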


The proof is similar to Lemma 3.7 in \cite{LWZ2}. Here we omit the details.\smallskip

\begin{lemma}\label{lem:P}
For $\lambda\in [0,1]$, let $-1 \leq y_1 \leq 0\leq y_2 \leq 1$ so that $\lambda=1-y_1^2=1-y_2^2$. Then for any $0\leq \delta \leq 1$,
\begin{align}
&\frac{1}{|1-(y_1-\delta)^2-\lambda|}=\frac{1}{(y_2-y_1+\delta)\delta},\label{eq:P-eq}\\
&\left\Vert \frac{1}{1-y^2-\lambda}\right\Vert_{L^2\big((-1,1)\setminus (y_1-\delta,y_2+\delta)\big)}\lesssim \frac{1}{(y_2-y_1+\delta)\delta^{\frac{1}{2}}},\label{eq:P-L2}\\
&\left\Vert \frac{1}{1-y^2-\lambda}\right\Vert_{L^1\big((-1,1)\setminus (y_1-\delta,y_2+\delta)\big)}\lesssim \frac{1+\ln \left(1+\frac{y_2-y_1}{\delta}\right)}{(y_2-y_1+\delta)},\label{eq:P-L1}\\
&	\left\Vert \frac{2y}{(1-y^2-\lambda)^2}\right\Vert_{L^2\big((-1,1)\setminus (y_1-\delta,y_2+\delta)\big)}\lesssim \frac{1}{(y_2-y_1+\delta)\delta^{\frac{3}{2}}}.\label{eq:P-L2-w}
\end{align}
\end{lemma}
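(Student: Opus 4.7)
The plan rests on three structural observations. First, the constraint $\lambda=1-y_1^2=1-y_2^2$ with $y_1\leq 0\leq y_2$ forces $y_1=-y_2$, so that
\[
1-y^2-\lambda=y_2^2-y^2=-(y-y_1)(y-y_2),
\]
and the complement $(-1,1)\setminus(y_1-\delta,y_2+\delta)$ is invariant under $y\mapsto -y$. Second, the integrand $1/(1-y^2-\lambda)$ is even and $2y/(1-y^2-\lambda)^2$ is odd, so for \eqref{eq:P-L2}, \eqref{eq:P-L1} and \eqref{eq:P-L2-w} it suffices to estimate each integral on the half-interval $(y_2+\delta,1)$ and double. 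Third, on $(y_2+\delta,1)$ one has the pointwise lower bounds $|y-y_2|\geq \delta$ and $|y-y_1|=y-y_1\geq y_2-y_1+\delta$.

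With these in hand, \eqref{eq:P-eq} is immediate: plugging $y=y_1-\delta$ into the factored form gives $1-(y_1-\delta)^2-\lambda=-\delta(y_2-y_1+\delta)$. For \eqref{eq:P-L2} I factor out $|y-y_1|^{-2}\leq (y_2-y_1+\delta)^{-2}$ and compute $\int_{y_2+\delta}^1(y-y_2)^{-2}\,\mathrm{d}y\leq \delta^{-1}$. For \eqref{eq:P-L2-w} the decisive extra gain comes from $0\leq y\leq y-y_1$, which yields $|y|/(y-y_1)^2\leq 1/(y-y_1)\leq 1/(y_2-y_1+\delta)$ and hence
\[
\frac{|2y|}{(y-y_1)^2(y-y_2)^2}\leq \frac{2}{(y_2-y_1+\delta)(y-y_2)^2};
\]
then $\int_{y_2+\delta}^{1}(y-y_2)^{-4}\,\mathrm{d}y\lesssim \delta^{-3}$ delivers the desired $(y_2-y_1+\delta)^{-1}\delta^{-3/2}$.

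The only estimate requiring care is the logarithmic $L^1$ bound \eqref{eq:P-L1}. Here I would use the partial fraction
\[
\frac{1}{(y-y_1)(y-y_2)}=\frac{1}{y_2-y_1}\Bigl(\frac{1}{y-y_2}-\frac{1}{y-y_1}\Bigr),
\]
integrate from $y_2+\delta$ to $1$, and discard the non-positive boundary term $\ln\frac{1-y_2}{1+y_2}$ to obtain
\[
\int_{y_2+\delta}^{1}\frac{\mathrm{d}y}{|1-y^2-\lambda|}\leq \frac{1}{y_2-y_1}\ln\Bigl(1+\frac{y_2-y_1}{\delta}\Bigr).
\]
The main (mild) obstacle is that the prefactor $(y_2-y_1)^{-1}$ appears to diverge as $\lambda\to 1$; this is resolved by the elementary inequality $\ln(1+x)\leq x$, which shows the right-hand side is always bounded by $\delta^{-1}$. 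A brief case split, according as $y_2-y_1\leq \delta$ (so $y_2-y_1+\delta\sim\delta$ and the log factor is $O(1)$) or $y_2-y_1>\delta$ (so $y_2-y_1+\delta\sim y_2-y_1$ absorbs the $(y_2-y_1)^{-1}$), then reproduces the claimed bound $(1+\ln(1+(y_2-y_1)/\delta))/(y_2-y_1+\delta)$ in both regimes.
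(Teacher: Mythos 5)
Your proposal is correct and follows essentially the same route as the paper: identical pointwise bounds $y-y_1\ge y_2-y_1+\delta$ and $y\le y-y_1$ on $(y_2+\delta,1)$ for \eqref{eq:P-eq}, \eqref{eq:P-L2} and \eqref{eq:P-L2-w}, with the only (cosmetic) difference being that for \eqref{eq:P-L1} you evaluate the integral exactly by partial fractions and then split cases on $y_2-y_1$ versus $\delta$, whereas the paper splits the domain of integration at $y-y_2=y_2-y_1+\delta$; both yield the same logarithm, though you should note that the partial-fraction identity degenerates at $y_1=y_2$, a case your $\ln(1+x)\le x$ remark covers by continuity.
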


\begin{proof}
\eqref{eq:P-eq} follows by noticing that $y_1=-y_2$ and
$$1-(y_1-\delta)^2-\lambda=y_2^2-(y_1-\delta)^2=(y_2-y_1+\delta)\delta.$$
Now \eqref{eq:P-L2} follows from
\begin{equation}\nonumber
\begin{aligned}
\int_{y_2+\delta}^1\left|\frac{1}{1-y^2-\lambda}\right|^2\mathrm{d}y=&\int_{y_2+\delta}^1\left|\frac{1}{(y-y_1)(y-y_2)}\right|^2\mathrm{d}y\\
\lesssim&\frac{1}{(y_2-y_1+\delta)^2}\int_{y_2+\delta}^1 \frac{1}{(y-y_2)^2}\mathrm{d}y\\
\lesssim &\frac{1}{(y_2-y_1+\delta)^2\delta},
\end{aligned}
\end{equation}
and \eqref{eq:P-L1} follows from
\begin{equation}\nonumber
\begin{aligned}
\int_{y_2+\delta}^1\left|\frac{1}{1-y^2-\lambda}\right|\mathrm{d}y=&\int_{y_2+\delta}^1\left|\frac{1}{(y-y_1)(y-y_2)}\right|\mathrm{d}y\\
\lesssim&\frac{1}{y_2-y_1+\delta}\int_{y_2+\delta}^{2y_2-y_1+\delta}\frac{1}{y-y_2}\mathrm{d}y+\int_{2y_2-y_1+\delta}^{\infty}\frac{1}{(y-y_2)^2}\mathrm{d}y\\
\lesssim&\frac{1+\ln\left(1+\frac{y_2-y_1}{\delta}\right)}{(y_2-y_1+\delta)}.
\end{aligned}
\end{equation}

Thanks to
\begin{equation}\nonumber
\begin{aligned}
\left|\frac{-2y}{(1-y^2-\lambda)^2}\right|
\lesssim&\frac{y}{(y-y_1)^2(y-y_2)^2}\lesssim\frac{1}{(y_2-y_1+\delta)(y-y_2)^2}
\end{aligned}
\end{equation}
for $y\in (y_2+\delta,1)$, we infer that
\begin{equation}\nonumber
\begin{aligned}
\int_{y_2+\delta}^1\left|\frac{-2y}{(1-y^2-\lambda)^2}\right|^2\mathrm{d}y\lesssim&\frac{1}{(y_2-y_1+\delta)^2}\int_{y_2+\delta}^1\frac{1}{(y-y_2)^4}\mathrm{d}y\\
\lesssim&\frac{1}{(y_2-y_1+\delta)^2\delta^3},
\end{aligned}
\end{equation}
which gives (\ref{eq:P-L2-w}).
\end{proof}

Similarly, we can show that for $0<\delta<\frac {y_2-y_1} 4$,
\begin{align}
&\left\Vert \frac{1}{1-y^2-\lambda}\right\Vert_{L^2\big((y_1+\delta,y_2-\delta)\big)}\lesssim \frac{1}{(y_2-y_1)\delta^{\frac{1}{2}}}.\label{eq:P-L2-in}
\end{align}

\subsection{Case of $\lambda>1$}

By integration by parts, we get
\begin{equation}\nonumber
\begin{aligned}
\Big|&\mathrm{Im}\big\langle -\nu(\partial_y^2-|k|^2) w+ik[(1-y^2-\lambda)w+2\varphi],w \big\rangle\Big|\\
&=|k|\left(\int_{-1}^1 (\lambda-1+y^2)|w|^2\mathrm{d}y+2\Vert \varphi'\Vert_{L^2}^2+2|k|^2\Vert \varphi\Vert_{L^2}^2\right),
\end{aligned}
\end{equation}
which gives
$$\int_{-1}^1 (\lambda-1+y^2)|w|^2\mathrm{d}y+2\Vert \varphi'\Vert_{L^2}^2+2|k|^2\Vert \varphi\Vert_{L^2}^2 \leq \frac{1}{|k|}\Vert F\Vert_{L^2}\Vert w \Vert_{L^2}.$$
Then we obtain
\begin{equation}\nonumber
\begin{aligned}
\Vert w \Vert_{L^2}^2 \leq &\Vert w\Vert_{L^2\big([-1,1]\setminus (-\delta,\delta)\big)}^2 +2\delta\Vert w \Vert_{L^\infty}^2\\
\lesssim&\frac{1}{|k|\delta^2}\Vert F\Vert_{L^2}\Vert w \Vert_{L^2}+\delta \Vert w \Vert_{L^2} \Vert w' \Vert_{L^2}\\
\lesssim&\frac{1}{|k|\delta^2}\Vert F\Vert_{L^2}\Vert w \Vert_{L^2}+\delta \nu^{-\frac{1}{2}}\Vert F \Vert_{L^2}^{\frac{1}{2}} \Vert w \Vert_{L^2}^{\frac{3}{2}},
\end{aligned}
\end{equation}
here we used $\lambda-1+y^2\geq \delta^2$ for $y\in [-1,1]\setminus (-\delta,\delta)$. This shows by taking $\delta=(\nu |k|^{-1})^{\frac{1}{4}}$ that
$$\Vert w \Vert_{L^2} \lesssim \nu^{-\frac{1}{2}}|k|^{-\frac{1}{2}}\Vert F\Vert_{L^2}.$$

\subsection{Case of $\lambda\in [0,1]$}

Let $-1 \leq y_1 \leq 0 \leq y_2 \leq 1$ so that $\lambda=1-y_1^2=1-y_2^2$.
We introduce the decomposition $\varphi=\varphi_1+\varphi_2$, where $\varphi_1$ solves
\begin{equation}\nonumber
\left\{
\begin{array}{lll}
(\partial_y^2-|k|^2)\varphi_1=w,\\
\varphi_1(\pm 1)=\varphi_1(y_1)=\varphi_1(y_2)=0,
\end{array}
\right.
\end{equation}
and $\varphi_2$ solves
\begin{equation}\nonumber
\left\{
\begin{array}{lll}
(\partial_y^2-|k|^2)\varphi_2=0,\\
\varphi_2(\pm 1)=0,\quad \varphi_2(y_1)=\varphi(y_1),\quad \varphi_2(y_2)=\varphi(y_2).
\end{array}
\right.
\end{equation}
It is easy to see that
\begin{equation}\nonumber
\varphi_2(y)=
\left\{
\begin{aligned}
&\frac{\sinh |k|(y+1)}{\sinh |k|(y_1+1)}\varphi (y_1), &y\in [-1,y_1],\\
&\frac{\sinh |k|(y-y_1)}{\sinh |k|(y_2-y_1)}\varphi(y_2)+\frac{\sinh |k|(y_2-y)}{\sinh |k|(y_2-y_1)}\varphi(y_1), &y\in [y_1,y_2],\\
&\frac{\sinh |k|(1-y)}{\sinh |k|(1-y_2)}\varphi (y_2),&y\in [y_2,1].
\end{aligned}
\right.
\end{equation}

We first give $L^2$ estimate of $w$ outside the interval $(y_1,y_2)$.

\begin{lemma}\label{lem:outside}
It holds that for any $\delta \in (0,1]$
\begin{equation}\nonumber
\Vert w \Vert_{L^2\big((-1,1)\setminus (y_1,y_2)\big)}^2 \leq C \mathcal{E}_1(w),
\end{equation}
where
\begin{equation}\label{def:E1}
\begin{aligned}
\mathcal{E}_1(w)=&\frac{\Vert F \Vert_{L^2}\Vert w \Vert_{L^2}}{|k|(y_2-y_1+\delta)\delta}+\frac{\nu\Vert w'\overline{w}\Vert_{L^\infty(B(y_1,\delta)\cup B(y_2,\delta))}}{|k|(y_2-y_1+\delta)\delta}\\
&+\frac{\nu \Vert w' \Vert_{L^2}\Vert w \Vert_{L^\infty}}{|k|(y_2-y_1+\delta)\delta^{\frac{3}{2}}}+\frac{\Vert \varphi_2 \Vert_{L^\infty}^2}{(y_2-y_1+\delta)^2\delta}+\delta \Vert w \Vert_{L^\infty}^2.
\end{aligned}
\end{equation}
\end{lemma}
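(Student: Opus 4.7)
I would split $(-1,1)\setminus(y_1,y_2)$ into the outer region $\Omega_\delta := (-1,y_1-\delta)\cup(y_2+\delta,1)$ and the two buffer strips $B_\delta := (y_1-\delta,y_1)\cup(y_2,y_2+\delta)$. On $B_\delta$ the trivial pointwise bound $\|w\|_{L^2(B_\delta)}^2\le 2\delta\|w\|_{L^\infty}^2$ immediately produces the last term of $\mathcal E_1(w)$. The main work is on $\Omega_\delta$, where \eqref{eq:P-eq} gives the uniform lower bound $|1-y^2-\lambda|\ge (y_2-y_1+\delta)\delta$, so the reciprocal weight $(1-y^2-\lambda)^{-1}$ is admissible as a test multiplier.

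On $\Omega_\delta$ the core of the argument is to pair the Orr--Sommerfeld equation \eqref{eq:res} with the test function $\bar w\chi_{\Omega_\delta}/(1-y^2-\lambda)$ and take the imaginary part. The weight cancels the convection coefficient, so that the middle term contributes exactly $k\|w\|_{L^2(\Omega_\delta)}^2$, which is the quantity we want to estimate; three contributions then remain on the right. The forcing gives $\mathrm{Im}\int_{\Omega_\delta}F\bar w/(1-y^2-\lambda)\,dy$, and Cauchy--Schwarz together with the $L^\infty$ bound $\|(1-y^2-\lambda)^{-1}\|_{L^\infty(\Omega_\delta)}\le [(y_2-y_1+\delta)\delta]^{-1}$ produces the first term of $\mathcal E_1$ after dividing by $k$. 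For the viscous term, the zeroth-order piece $|k|^2|w|^2/(1-y^2-\lambda)$ is real and drops; one integration by parts of $\partial_y^2 w\cdot\bar w/(1-y^2-\lambda)$ also eliminates the real contribution $|w'|^2/(1-y^2-\lambda)$ and leaves a boundary contribution at $y=y_1-\delta,y_2+\delta$ (those at $y=\pm 1$ vanish since $w(\pm 1)=0$), controlled by $\|w'\bar w\|_{L^\infty(B_\delta)}\cdot [(y_2-y_1+\delta)\delta]^{-1}$ via \eqref{eq:P-eq}, plus an interior term $\int_{\Omega_\delta}w'\bar w\cdot 2y/(1-y^2-\lambda)^2\,dy$ bounded by $\|w'\|_{L^2}\|w\|_{L^\infty}$ times the $L^2$ estimate \eqref{eq:P-L2-w} with its $[(y_2-y_1+\delta)\delta^{3/2}]^{-1}$ factor; together these produce the second and third terms of $\mathcal E_1$.

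The remaining and most delicate contribution is the stream-function coupling $-2k\,\mathrm{Re}\int_{\Omega_\delta}\varphi\bar w/(1-y^2-\lambda)\,dy$, which I would split via $\varphi=\varphi_1+\varphi_2$. For $\varphi_2$, Cauchy--Schwarz with the $L^2$ bound \eqref{eq:P-L2} followed by Young's inequality absorbs $(k/2)\|w\|_{L^2(\Omega_\delta)}^2$ into the left-hand side and leaves the fourth term $\|\varphi_2\|_{L^\infty}^2/[(y_2-y_1+\delta)^2\delta]$. The $\varphi_1$ piece is the main obstacle, since a naive $L^\infty$-of-weight estimate would introduce a term of the form $\|\varphi_1\|_{L^\infty}^2/[(y_2-y_1+\delta)^2\delta^2]$ that is not in $\mathcal E_1(w)$; instead one should integrate by parts using $\bar w=(\partial_y^2-|k|^2)\bar\varphi_1$ and exploit the zeros $\varphi_1(\pm 1)=\varphi_1(y_1)=\varphi_1(y_2)=0$. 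The dominant piece $-\int(|\varphi_1'|^2+|k|^2|\varphi_1|^2)/(1-y^2-\lambda)\,dy$ is nonnegative on $\Omega_\delta$ (the denominator is negative there), so it contributes to $-2k\,\mathrm{Re}(\cdots)$ with a favorable sign and can simply be dropped from the upper bound; the residual boundary contribution at $y_1-\delta,y_2+\delta$ (where $\varphi_1$ is small thanks to $\varphi_1(y_1)=\varphi_1(y_2)=0$) and the remainder $\int 2y\varphi_1\bar\varphi_1'/(1-y^2-\lambda)^2\,dy$ are then controlled via a Hardy-type inequality together with the energy identity \eqref{eq:energy-cut2} (which bounds $\|\varphi_1'\|_{L^2(\Omega_0)}$ by $\|w\|_{L^2(\Omega_0)}$), and ultimately absorbed into $k\|w\|_{L^2(\Omega_\delta)}^2$ and the strip term $\delta\|w\|_{L^\infty}^2$ already present. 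Making this absorption produce \emph{no new term} in $\mathcal E_1(w)$, i.e., carefully balancing the favorable sign of the bulk integral against the boundary and lower-order leftovers, is the key technical step; the other four contributions to $\mathcal E_1(w)$ follow by direct substitution of Lemmas \ref{lem:energy} and \ref{lem:P}.
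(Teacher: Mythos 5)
Your treatment of the buffer strips, of the forcing term, of the viscous boundary and $2y/(1-y^2-\lambda)^2$ contributions, and of the $\varphi_2$-coupling coincides with the paper's use of the weighted multiplier $\bar w\,\chi/(1-y^2-\lambda)$ on $(-1,1)\setminus(y_1-\delta,y_2+\delta)$, and those four contributions to $\mathcal E_1(w)$ are fine. The genuine gap is in the $\varphi_1$-coupling. Your sign observation — that the bulk term $-\int_{\Omega_\delta}(|\varphi_1'|^2+|k|^2|\varphi_1|^2)/(1-y^2-\lambda)\,\mathrm{d}y$ is nonnegative because $1-y^2-\lambda<0$ there — is correct, but it only removes the leading piece; the argument then stands or falls on the leftover boundary terms $\varphi_1\bar\varphi_1'/(1-y^2-\lambda)$ at $y_1-\delta$ and $y_2+\delta$ and on the remainder $\int_{\Omega_\delta}2y\,\varphi_1\bar\varphi_1'/(1-y^2-\lambda)^2\,\mathrm{d}y$, and your claim that these are ``absorbed into $k\|w\|_{L^2(\Omega_\delta)}^2$ and $\delta\|w\|_{L^\infty}^2$ with no new term'' is precisely the step that fails as stated. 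The only input you allow yourself is $\|\varphi_1'\|_{L^2}\lesssim\|w\|_{L^2}$ from \eqref{eq:energy-cut2} plus a Hardy inequality; every estimate of those leftovers then carries a prefactor of at least $(y_2-y_1+\delta)^{-2}$ (and the boundary term additionally needs a pointwise bound on $\varphi_1'$ that you never produce), so after multiplying by $|k|$ you would have to absorb something of the size $|k|(y_2-y_1+\delta)^{-2}\|w\|_{L^2}^2$ into $\tfrac{|k|}{2}\|w\|_{L^2(\Omega_\delta)}^2$, which is impossible when $y_2-y_1$ is small — exactly the regime ($\lambda$ close to $1$) where the lemma is needed.

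The missing ingredient is the paper's \emph{first}, unweighted multiplier: pairing the equation with $w\,\chi_{(-1,1)\setminus(y_1,y_2)}$ and invoking \eqref{eq:energy-cut2} yields $\int_{(-1,1)\setminus(y_1,y_2)}(\lambda-1+y^2)|w|^2\,\mathrm{d}y+\Vert\varphi_1'\Vert_{L^2((-1,1)\setminus(y_1,y_2))}^2\lesssim |k|^{-1}\Vert F\Vert_{L^2}\Vert w\Vert_{L^2}+\nu|k|^{-1}\Vert w'\bar w\Vert_{L^\infty(B(y_1,\delta)\cup B(y_2,\delta))}+\big|\int 2\varphi_2\bar w\,\mathrm{d}y\big|$, i.e.\ it controls $\Vert\varphi_1'\Vert_{L^2}^2$ directly by quantities that are already of size $\mathcal E_1(w)(y_2-y_1+\delta)\delta$ rather than by $\Vert w\Vert_{L^2}^2$. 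Feeding this into Hardy's inequality (using $\varphi_1(y_1)=\varphi_1(y_2)=0$) bounds $\int|\varphi_1|^2/(1-y^2-\lambda)^2$ by $\mathcal E_1(w)$, after a separate $L^1$ bootstrap for $\Vert w\Vert_{L^1}$ outside $(y_1-\delta,y_2+\delta)$ that closes the $\varphi_2$ cross term; no integration by parts on the $\varphi_1$ coupling and no sign argument are then required. If you wish to keep your route, you would in any case have to import this unweighted-multiplier identity to obtain a usable bound on $\Vert\varphi_1'\Vert_{L^2}$, at which point the paper's Cauchy--Schwarz-plus-Hardy argument is the more economical way to finish.
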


\begin{proof}
By integration by parts, we get
\begin{equation}\nonumber
\begin{aligned}
&\Big|\mathrm{Im}\big\langle -\nu(\partial_y^2-|k|^2)w+ik[(1-y^2-\lambda)w+2\varphi], w \chi_{(-1,1)\setminus (y_1,y_2)}\big\rangle\Big|\\
&=\bigg|k\int_{(-1,1)\setminus (y_1,y_2)}(1-y^2-\lambda)|w|^2\mathrm{d}y+\nu \mathrm{Im}\big(w'\overline{w}(y_2)-w'\overline{w}(y_1)\big)\\
&\quad+\mathrm{Im}\int_{(-1,1)\setminus (y_1,y_2)}2ki\varphi \overline{w}\mathrm{d}y\bigg|\\
&\geq  |k|\int_{(-1,1)\setminus (y_1,y_2)}(\lambda-1+y^2)|w|^2\mathrm{d}y-2|k| \int_{(-1,1)\setminus (y_1,y_2)}\varphi_1\overline{w}\mathrm{d}y\\
&\quad-\nu\big(|w'\overline{w}(y_2)|+|w'\overline{w}(y_1)|\big)-2|k|\left| \int_{(-1,1)\setminus (y_1,y_2)}\varphi_2\overline{w}\mathrm{d}y\right|,
\end{aligned}
\end{equation}
which gives
\begin{equation}\label{eq:out-w}
\begin{aligned}
&\int_{(-1,1)\setminus (y_1,y_2)}(\lambda-1+y^2)|w|^2\mathrm{d}y+ \int_{(-1,1)\setminus (y_1,y_2)}-2\varphi_1\overline{w}\mathrm{d}y \\
&\leq |k|^{-1}\Vert F\Vert_{L^2}\Vert w \Vert_{L^2}\\
&\quad+\frac{\nu}{|k|}\Vert w'\overline{w}\Vert_{   L^\infty(B(y_1,\delta)\cup B(y_2,\delta))}+\left| \int_{(-1,1)\setminus (y_1,y_2)}2\varphi_2\overline{w}\mathrm{d}y\right|.
\end{aligned}
\end{equation}
Similarly, we have
\begin{equation}\nonumber
\begin{aligned}
&\bigg|\mathrm{Im} \bigg\langle -\nu(\partial_y^2-|k|^2)w\\
&\qquad+ik[(1-y^2-\lambda)w+2\varphi], \frac{w}{1-y^2-\lambda} \chi_{(-1,1)\setminus (y_1-\delta,y_2+\delta)}\bigg\rangle\bigg|\\
=&\bigg|\mathrm{Im}\bigg(-\nu\frac{w'\overline{w}}{(1-y^2-\lambda)}\Big|_{-1}^{y_1-\delta}-\nu\frac{w'\overline{w}}{(1-y^2-\lambda)}\Big|_{y_2+\delta}^{1}\bigg)\\
&+\mathrm{Im}\bigg(2ik\int_{(-1,1)\setminus (y_1-\delta,y_2+\delta)}\frac{\varphi \overline{w}}{1-y^2-\lambda}\mathrm{d}y\\
&+\nu\int_{(-1,1)\setminus (y_1-\delta,y_2+\delta)}\frac{2yw'\overline{w}}{(1-y^2-\lambda)^2}\mathrm{d}y\\
&+ik\int_{(-1,1)\setminus (y_1-\delta,y_2+\delta)}|w|^2\mathrm{d}y\bigg)\bigg|\\
\geq& -\nu\Vert w'\overline{w}\Vert_{L^\infty(B(y_1,\delta)\cup B(y_2,\delta))}\left(\frac{1}{|y_1^2-(y_1-\delta)^2|}+\frac{1}{|y_2^2-(y_2+\delta)^2|}\right)\\
&-\nu\Vert w \Vert_{L^\infty}\Vert w' \Vert_{L^2}\left\Vert \frac{2y}{(1-y^2-\lambda)^2}\right\Vert_{L^2((-1,1)\setminus (y_1-\delta,y_2+\delta))}\\
&+\frac{|k|}{2}\int_{(-1,1)\setminus (y_1-\delta,y_2+\delta)}|w|^2\mathrm{d}y\\
&-|k|\int_{(-1,1)\setminus (y_1-\delta,y_2+\delta)}\frac{2|\varphi|^2}{(1-y^2-\lambda)^2}\mathrm{d}y,
\end{aligned}
\end{equation}
which shows that
\begin{equation}\nonumber
\begin{aligned}
\frac{|k|}{2}\int_{(-1,1)\setminus (y_1-\delta,y_2+\delta)}|w|^2\mathrm{d}y
\leq& \Vert F\Vert_{L^2}\left\Vert \frac{w}{(1-y^2-\lambda)}\right\Vert_{L^2((-1,1)\setminus (y_1-\delta,y_2+\delta))}\\
&+\frac{2\nu \Vert w'\overline{w}\Vert_{L^\infty(B(y_1,\delta)\cup B(y_2,\delta))}}{|y_1^2-(y_1-\delta)^2|}\\
&+\nu\Vert w \Vert_{L^\infty}\Vert w' \Vert_{L^2}\left\Vert \frac{2y}{(1-y^2-\lambda)^2}\right\Vert_{L^2((-1,1)\setminus (y_1-\delta,y_2+\delta))}\\
&+|k|\int_{(-1,1)\setminus (y_1-\delta,y_2+\delta)}\frac{2|\varphi|^2}{(1-y^2-\lambda)^2}\mathrm{d}y.
\end{aligned}
\end{equation}
This along with Lemma \ref{lem:P} shows that
\begin{equation}\label{eq:out-dw}
\begin{aligned}
\Vert w \Vert_{L^2((-1,1)\setminus (y_1,y_2))}^2 \leq & \Vert w \Vert_{L^2((-1,1)\setminus (y_1-\delta,y_2+\delta))}^2+2\delta \Vert w \Vert_{L^\infty}^2\\
\lesssim & \frac{\Vert F \Vert_{L^2}\Vert w \Vert_{L^2}}{|k|(y_2-y_1+\delta)\delta}+\frac{\nu\Vert w'\overline{w}\Vert_{L^\infty(B(y_1,\delta)\cup B(y_2,\delta))}}{|k|(y_2-y_1+\delta)\delta}\\
&+\frac{\nu \Vert w'\Vert_{L^2}\Vert w \Vert_{L^\infty}}{|k|\delta^{\frac{3}{2}}(y_2-y_1+\delta)}+\delta \Vert w \Vert_{L^\infty}^2\\
&+\int_{(-1,1)\setminus (y_1-\delta,y_2+\delta)}\frac{|\varphi|^2}{(1-y^2-\lambda)^2}\mathrm{d}y.
\end{aligned}
\end{equation}

It remains to estimate $\int_{(-1,1)\setminus (y_1-\delta,y_2+\delta)}\frac{|\varphi|^2}{(1-y^2-\lambda)^2}\mathrm{d}y$, which is bounded as
\begin{equation}\nonumber
\begin{aligned}\int_{(-1,1)\setminus (y_1-\delta,y_2+\delta)}\frac{|\varphi|^2}{(1-y^2-\lambda)^2}\mathrm{d}y \lesssim &\int_{(-1,1)\setminus (y_1-\delta,y_2+\delta)}\frac{|\varphi_1|^2}{(1-y^2-\lambda)^2}\mathrm{d}y\\
&+\int_{(-1,1)\setminus (y_1-\delta,y_2+\delta)}\frac{|\varphi_2|^2}{(1-y^2-\lambda)^2}\mathrm{d}y.
\end{aligned}
\end{equation}
Thanks to $|1-y^2-\lambda|=(y-y_1)(y-y_2),$ we get by Hardy's inequality that
\begin{equation}\label{eq:out-phi1}
\begin{aligned}
&\int_{(-1,1)\setminus (y_1-\delta,y_2+\delta)}\frac{|\varphi_1|^2}{(1-y^2-\lambda)^2}\mathrm{d}y \\
&\lesssim \int_{-1}^{y_1-\delta}\frac{|\varphi_1|^2}{(1-y^2-\lambda)^2}\mathrm{d}y+\int_{y_2+\delta}^{1}\frac{|\varphi_1|^2}{(1-y^2-\lambda)^2}\mathrm{d}y\\
&\leq  \frac{1}{(y_2-y_1+\delta)^2}\bigg(\int_{y_2+\delta}^{1}\frac{|\int_{y_1}^{y}\varphi_1'\mathrm{d}z|^2}{(y-y_1)^2}\mathrm{d}y+ \int_{y_2+\delta}^{1}\frac{|\int_{y_2}^{y}\varphi_1'\mathrm{d}z|^2}{(y-y_2)^2}\mathrm{d}y\bigg)\\
&\lesssim \frac{1}{(y_2-y_1+\delta)^2} \int_{(-1,1)\setminus (y_1-\delta,y_2+\delta)}|\varphi_1'|^2\mathrm{d}y.
\end{aligned}
\end{equation}
By (\ref{eq:energy-cut2}) and (\ref{eq:out-w}), we have
\begin{equation}\nonumber
\begin{aligned}
&\frac{1}{(y_2-y_1+\delta)^2} \int_{(-1,1)\setminus (y_1,y_2)}|\varphi_1'|^2\mathrm{d}y\\
&\leq \frac{1}{(y_2-y_1+\delta)^2}\big\langle -\varphi_1,w\chi_{([-1,1]\setminus (y_1,y_2))}\big\rangle\\
&\lesssim\frac{1}{(y_2-y_1+\delta)^2}\bigg(|k|^{-1}\Vert F\Vert_{L^2}\Vert w \Vert_{L^2}+\frac{\nu}{|k|}\Vert w'\overline{w}\Vert_{   L^\infty(B(y_1,\delta)\cup B(y_2,\delta))}\\
&\qquad+\left| \int_{(-1,1)\setminus (y_1,y_2)}2\varphi_2\overline{w}\mathrm{d}y\right|\bigg)\\
&\lesssim \frac{\Vert F\Vert_{L^2}\Vert w \Vert_{L^2}}{|k|(y_2-y_1+\delta)^2}+\frac{\nu \Vert w'\overline{w}\Vert_{  L^\infty(B(y_1,\delta)\cup B(y_2,\delta))}}{|k|(y_2-y_1+\delta)^2}\\
&\quad +\frac{\Vert \varphi_2\Vert_{L^\infty}}{\delta^{\frac{1}{2}}(y_2-y_1+\delta)}\frac{\delta^{\frac{3}{2}}\Vert w \Vert_{L^\infty}+\delta^{\frac{1}{2}}\Vert w \Vert_{L^1((-1,1)\setminus (y_1-\delta,y_2+\delta))}}{(y_2-y_1+\delta)}.
\end{aligned}
\end{equation}
By Lemma \ref{lem:P}, we have
\begin{align*}
&\Vert w \Vert_{L^1((-1,1)\setminus (y_1-\delta,y_2+\delta))}^2 \\
&\leq \left\Vert \frac{1}{1-y^2-\lambda}\right\Vert_{L^1((-1,1)\setminus (y_1-\delta,y_2+\delta))}\left(\int_{(-1,1)\setminus (y_1-\delta,y_2+\delta)}(\lambda-1+y^2)|w|^2\mathrm{d}y\right)\\
&\lesssim\frac{1+\ln (1+\frac{y_2-y_1}{\delta})}{(y_2-y_1+\delta)} \bigg(\frac{\Vert F\Vert_{L^2}\Vert w \Vert_{L^2}}{|k|}+\frac{\nu}{|k|}\Vert w'\overline{w}\Vert_{   L^\infty(B(y_1,\delta)\cup B(y_2,\delta))}\\
&\qquad +\left| \int_{(-1,1)\setminus (y_1,y_2)}2\varphi_2\overline{w}\mathrm{d}y\right|\bigg)\\
&\leq \frac{1+\ln (1+\frac{y_2-y_1}{\delta})}{(y_2-y_1+\delta)}\big(\mathcal{E}_1(w)(y_2-y_1+\delta)\delta+\Vert \varphi_2 \Vert_{L^\infty}(2\delta \Vert w \Vert_{L^\infty}\\
&\qquad +\Vert w \Vert_{L^1((-1,1)\setminus (y_1-\delta,y_2+\delta))})\bigg)\\
&\lesssim \mathcal{E}_1(w)\delta\left(1+\ln (1+\frac{y_2-y_1}{\delta})\right)\\
&\qquad +\frac{1+\ln (1+\frac{y_2-y_1}{\delta})}{(y_2-y_1+\delta)}\Vert \varphi_2\Vert_{L^\infty}\Vert w \Vert_{L^1((-1,1)\setminus (y_1-\delta,y_2+\delta))}),
\end{align*}
which implies
\begin{equation}\label{eq:out-w-L1}
\begin{aligned}
\Vert w \Vert_{L^1((-1,1)\setminus (y_1-\delta,y_2+\delta))}^2
&\lesssim \mathcal{E}_1(w)\delta\left(1+\ln (1+\frac{y_2-y_1}{\delta})\right)\\
&\qquad+\left|\frac{1+\ln (1+\frac{y_2-y_1}{\delta})}{(y_2-y_1+\delta)}\right|^2\Vert \varphi_2\Vert_{L^\infty}^2\\
&\lesssim \mathcal{E}_1(w)\delta(1+\frac{y_2-y_1}{\delta})= \mathcal{E}_1(w)(y_2-y_1+\delta).
\end{aligned}
\end{equation}
Then we conclude from (\ref{eq:out-phi1}) that
\begin{equation}\label{eq:out-phi1-2}
\begin{aligned}
\int_{(-1,1)\setminus (y_1-\delta,y_2+\delta)}\frac{|\varphi_1|^2}{(1-y^2-\lambda)^2}\mathrm{d}y \lesssim \mathcal{E}_1(w).
\end{aligned}
\end{equation}
On the other hand, we have
\begin{equation}\label{eq:out-phi2}
\begin{aligned}
&\int_{(-1,1)\setminus (y_1-\delta,y_2+\delta)}\frac{|\varphi_2|^2}{(1-y^2-\lambda)^2}\mathrm{d}y \\
&\lesssim \Vert \varphi_2\Vert_{L^\infty}^2\int_{(-1,1)\setminus (y_1-\delta,y_2+\delta)}\frac{1}{\left((y-y_1)(y-y_2)\right)^2}\mathrm{d}y\\
&\lesssim \frac{\Vert \varphi_2\Vert_{L^\infty}^2}{(y_2-y_1+\delta)^2\delta} \leq \mathcal{E}_1(w).
\end{aligned}
\end{equation}

Finally, the lemma follows from (\ref{eq:out-dw}), (\ref{eq:out-phi1-2}) and (\ref{eq:out-phi2}).
\end{proof}

Next we give $L^2$ estimate of $w$ in the interval $(y_1,y_2)$.

\begin{lemma}\label{lem:inside}
Let $\delta \in (0,\frac{y_2-y_1}{4}]$. It holds that
\begin{equation}\nonumber
\Vert w \Vert_{L^2(y_1,y_2)}^2 \leq C \mathcal{E}_2(w),
\end{equation}
where
\begin{equation}\label{def:E2}
\begin{aligned}
\mathcal{E}_2(w)=&\frac{\Vert F \Vert_{L^2}\Vert w \Vert_{L^2}}{|k|\delta(y_2-y_1)}+\frac{\nu \Vert w'\overline{w}\Vert_{L^\infty(B(y_1,\delta)\cup B(y_2,\delta))}}{|k|\delta (y_2-y_1)}\\
&+\delta \Vert w \Vert_{L^\infty}^2+\frac{\nu^2\Vert w \Vert_{L^\infty}^2}{|k|^2(y_2-y_1)^3\delta^4}+\frac{\nu \Vert w'\Vert_{L^2}\Vert w \Vert_{L^\infty}}{|k|\delta^{\frac{3}{2}}(y_2-y_1)}\\
&+\frac{\Vert \varphi_2\Vert_{L^\infty}^2}{(y_2-y_1)^2\delta}+\delta^3\Vert w'\Vert_{L^\infty(B(y_1,\delta)\cup B(y_2,\delta))}^2+\frac{\Vert F \Vert_{L^2}^2}{|k|^2(y_2-y_1)^3\delta}\\
&+\frac{\nu^2}{|k|^2}\left(\frac{\Vert w'\Vert_{L^\infty(B(y_1,\delta)\cup B(y_2,\delta))}^2}{(y_2-y_1)^3\delta^2}+\frac{\Vert w'\Vert_{L^2}^2}{(y_2-y_1)^3\delta^3}\right).
\end{aligned}
\end{equation}
\end{lemma}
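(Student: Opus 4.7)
The plan is to mimic the structure of Lemma~\ref{lem:outside} while exploiting the positivity of $1-y^2-\lambda$ inside $(y_1,y_2)$ and invoking the Poiseuille-specific inequality \eqref{eq:energy-key} together with Lemma~\ref{lem:hardy}. First I would reduce to a smaller sub-interval by the trivial splitting
\begin{equation*}
\|w\|_{L^2(y_1,y_2)}^2 \leq \|w\|_{L^2(y_1+\delta,y_2-\delta)}^2 + 2\delta\,\|w\|_{L^\infty}^2,
\end{equation*}
which already accounts for the $\delta\|w\|_{L^\infty}^2$ term in $\mathcal{E}_2(w)$. The main task becomes bounding the first piece, on which $|1-y^2-\lambda|\gtrsim (y_2-y_1)\delta$ by \eqref{eq:P-eq}.

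The main step is to test \eqref{eq:res} against $\overline{w/(1-y^2-\lambda)}\chi_{(y_1+\delta,y_2-\delta)}$ and take the imaginary part. The streaming term $ik(1-y^2-\lambda)w\cdot\overline{w/(1-y^2-\lambda)}$ produces the desired coercive contribution $|k|\|w\|_{L^2(y_1+\delta,y_2-\delta)}^2$. Integration by parts on the viscous part yields (i) boundary contributions at $y_1+\delta, y_2-\delta$ of size $\nu\|w'\overline{w}\|_{L^\infty(B(y_1,\delta)\cup B(y_2,\delta))}/((y_2-y_1)\delta)$, and (ii) an interior commutator $\nu\int 2y\,w'\overline{w}/(1-y^2-\lambda)^2\,dy$, controlled by Cauchy--Schwarz together with an inside analogue of \eqref{eq:P-L2-w}, producing the $\nu\|w'\|_{L^2}\|w\|_{L^\infty}/(|k|\delta^{3/2}(y_2-y_1))$ term. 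The forcing contributes $\|F\|_{L^2}\|w/(1-y^2-\lambda)\|_{L^2(y_1+\delta,y_2-\delta)}$, which by \eqref{eq:P-L2-in} and Young's inequality is absorbed into the $\|F\|_{L^2}\|w\|_{L^2}/(|k|\delta(y_2-y_1))$ and $\|F\|_{L^2}^2/(|k|^2(y_2-y_1)^3\delta)$ terms.

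The core difficulty is the nonlocal piece $2ik\int \varphi\,\overline{w}/(1-y^2-\lambda)\,dy$, which I would attack by $\varphi=\varphi_1+\varphi_2$. For $\varphi_2$, since it is harmonic-like and bounded pointwise, a direct Cauchy--Schwarz against \eqref{eq:P-L2-in} gives $\int|\varphi_2|^2/(1-y^2-\lambda)^2\lesssim \|\varphi_2\|_{L^\infty}^2/((y_2-y_1)^2\delta)$. For $\varphi_1$, Lemma~\ref{lem:hardy} controls $\int|\varphi_1|^2/(1-y^2-\lambda)^2$ by $(y_2-y_1)^{-2}\int(1-y^2-\lambda)^2|(\varphi_1/(1-y^2-\lambda))'|^2 + |\varphi_1(0)|^2/(y_2-y_1)^3$. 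The first quantity I would bound by the key inequality \eqref{eq:energy-key}, whose right-hand side $\int(1-y^2-\lambda)|w|^2\,dy + \langle 2\varphi_1,w\chi_{(y_1,y_2)}\rangle$ is estimated by testing \eqref{eq:res} against $\overline{w}\chi_{(y_1,y_2)}$ and taking imaginary parts (producing the boundary terms $\nu\,\mathrm{Im}(w'\overline{w})|_{y_1}^{y_2}$ that feed into the remaining $\nu$-type contributions of $\mathcal{E}_2(w)$). The boundary value $|\varphi_1(0)|$ is controlled via the Poincar\'e-type bound $|\varphi_1(0)|\lesssim(y_2-y_1)^{1/2}\|\varphi_1'\|_{L^2(y_1,y_2)}$ together with $\|\varphi_1'\|_{L^2(y_1,y_2)}^2\leq|\langle\varphi_1,w\chi_{(y_1,y_2)}\rangle|$.

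The hard part will be the bookkeeping: many of the $\nu^2$-type terms in $\mathcal{E}_2(w)$ arise from iterating these estimates (squaring an $L^\infty$ trace bound near the critical points when converting boundary values at $y_i\pm\delta$ to values at $y_i$ via $w(y_i)=w(y_i\pm\delta)\mp\int w'$, which is where the $\delta^3\|w'\|_{L^\infty(B(y_1,\delta)\cup B(y_2,\delta))}^2$ contribution originates). Young's inequality must be applied at each step with carefully chosen weights so that the various $\|w\|_{L^2}$-type factors on the right can be absorbed into the left-hand side, while the remaining source, viscous, and boundary pieces are packaged into the exact powers of $\delta$, $y_2-y_1$, $|k|$ and $\nu$ appearing in \eqref{def:E2}. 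Conceptually, however, the scheme is just the inside-interval counterpart of Lemma~\ref{lem:outside}, with the coercive structure provided by \eqref{eq:energy-key} and Lemma~\ref{lem:hardy}.
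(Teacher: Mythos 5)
Your skeleton matches the paper's proof: the two test functions $\overline{w}\chi_{(y_1,y_2)}$ and $\overline{w}/(1-y^2-\lambda)\cdot\chi_{(y_1+\delta,y_2-\delta)}$, the splitting $\varphi=\varphi_1+\varphi_2$, Lemma \ref{lem:hardy}, and \eqref{eq:energy-key} are all exactly the ingredients used. But there is a genuine gap at the single hardest point of the lemma, namely the control of $|\varphi_1(0)|^2/(y_2-y_1)^3$. You propose $|\varphi_1(0)|^2\le (y_2-y_1)\Vert\varphi_1'\Vert_{L^2(y_1,y_2)}^2$ together with $\Vert\varphi_1'\Vert_{L^2(y_1,y_2)}^2\le|\langle\varphi_1,w\chi_{(y_1,y_2)}\rangle|$. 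Both inequalities are true, but the quantity $-\langle\varphi_1,w\chi_{(y_1,y_2)}\rangle=\Vert\varphi_1'\Vert_{L^2(y_1,y_2)}^2+|k|^2\Vert\varphi_1\Vert_{L^2(y_1,y_2)}^2$ is \emph{not} controlled by $(y_2-y_1)^2\mathcal{E}_2(w)$. Inside $(y_1,y_2)$ the sign structure is the opposite of the outside case: there $1-y^2-\lambda\ge0$, so the imaginary-part identity only bounds the \emph{combination} $\int_{y_1}^{y_2}(1-y^2-\lambda)|w|^2\,\mathrm{d}y+\langle2\varphi_1,w\chi_{(y_1,y_2)}\rangle$, which is a difference of two nonnegative quantities; by \eqref{eq:energy-key} this difference dominates the weighted derivative $\int(1-y^2-\lambda)^2|(\varphi_1/(1-y^2-\lambda))'|^2$, but it gives no bound on $\Vert\varphi_1'\Vert_{L^2(y_1,y_2)}^2$ alone (contrast with \eqref{eq:energy-cut2}--\eqref{eq:out-w}, where both summands are nonnegative and each is controlled separately). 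One can try to salvage your route via $-\langle\varphi_1,w\rangle\le\Vert\varphi_1\Vert_{L^2}\Vert w\Vert_{L^2}$ and the $|k|^2\Vert\varphi_1\Vert_{L^2}^2$ control from \eqref{eq:energy-key}, but this only closes when $|k|(y_2-y_1)\gtrsim1$; it fails precisely in the long-wavelength regime.

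The paper handles this by writing $\frac{\varphi_1(0)}{1-\lambda}(y_2-y_1-2\theta)$ through an exact integration-by-parts identity with the weight $\min(y_2-y,\,y-y_1)$, reducing the problem to bounding $\frac{1}{y_2-y_1}\bigl|\int_{y_1+\theta}^{y_2-\theta}\frac{\varphi_1}{1-y^2-\lambda}\,\mathrm{d}y\bigr|^2$, and then splitting into the cases $|k|^2(y_2-y_1)^2\ge1$ and $|k|^2(y_2-y_1)^2\le1$. In the second case one introduces a cutoff $\chi$ and, crucially, uses the resolvent equation itself to substitute $w=\bigl(-ik^{-1}(F+\nu(\partial_y^2-|k|^2)w)-2\varphi\bigr)/(1-y^2-\lambda)$, producing the seven terms $I_1,\dots,I_7$. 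This substitution is the actual origin of the terms $\frac{\Vert F\Vert_{L^2}^2}{|k|^2(y_2-y_1)^3\delta}$, $\frac{\nu^2\Vert w\Vert_{L^\infty}^2}{|k|^2(y_2-y_1)^3\delta^4}$ and $\frac{\nu^2}{|k|^2}\bigl(\frac{\Vert w'\Vert_{L^\infty}^2}{(y_2-y_1)^3\delta^2}+\frac{\Vert w'\Vert_{L^2}^2}{(y_2-y_1)^3\delta^3}\bigr)$ in \eqref{def:E2} --- not the boundary-value bookkeeping you attribute them to. As written, your argument does not produce these terms and the mechanism offered in their place does not close in the regime $|k|^2(y_2-y_1)^2\le1$.
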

\begin{proof}
By integration by parts, we get
\begin{equation}\nonumber
\begin{aligned}
&\big|\mathrm{Im} \big\langle -\nu(\partial_y^2-|k|^2)w+ik[(1-y^2-\lambda)w+2\varphi], w \chi_{(y_1,y_2)}\big\rangle\big|\\
&=\bigg|k\int_{y_1}^{y_2}(1-y^2-\lambda)|w|^2\mathrm{d}y-\nu \mathrm{Im}\big(w'\overline{w}(y_2)-w'\overline{w}(y_1)\big)+k\int_{y_1}^{y_2}2\varphi \overline{w}\mathrm{d}y\bigg|\\
&\geq |k|\left(\int_{y_1}^{y_2}(1-y^2-\lambda)|w|^2\mathrm{d}y+2 \int_{y_1}^{y_2}\varphi_1\overline{w}\mathrm{d}y\right)\\
&\quad-\nu\big(|w'\overline{w}(y_2)|+|w'\overline{w}(y_1)|\big)-2|k|\left| \int_{y_1}^{y_2}\varphi_2\overline{w}\mathrm{d}y\right|,
\end{aligned}
\end{equation}
which gives
\begin{equation}\label{eq:in-w}
\begin{aligned}
&\frac{1}{(y_2-y_1)^2}\bigg(\int_{y_1}^{y_2}(1-y^2-\lambda)|w|^2\mathrm{d}y+\int_{y_1}^{y_2}2\varphi_1\overline{w}\mathrm{d}y\bigg)\\
&\leq \frac{1}{(y_2-y_1)^2}\bigg(\frac{1}{|k|}\Vert F\Vert_{L^2}\Vert w \Vert_{L^2}+\frac{2\nu}{|k|}\Vert w'\overline{w}\Vert_{L^\infty(B(y_1,\delta)\cup B(y_2,\delta))}\\
&\quad+2\left| \int_{y_1}^{y_2}\varphi_2\overline{w}\mathrm{d}y\right|\bigg)\\
&\lesssim \mathcal{E}_2(w)+\frac{2\Vert \varphi_2\Vert_{L^\infty}}{(y_2-y_1)\delta^{\frac{1}{2}}}\delta^{\frac{1}{2}}\Vert w \Vert_{L^\infty}\lesssim\mathcal{E}_2(w).
\end{aligned}
\end{equation}
Similarly, we have
\begin{equation}\nonumber
\begin{aligned}
&\bigg|\mathrm{Im} \bigg\langle -\nu(\partial_y^2-|k|^2)w\\
&\quad \quad +ik[(1-y^2-\lambda)w+2\varphi], \frac{w}{1-y^2-\lambda} \chi_{(y_1+\delta,y_2-\delta)}\bigg\rangle\bigg|\\
=&\bigg|\mathrm{Im}\bigg(-\nu\frac{w'\overline{w}}{(1-y^2-\lambda)}\Big|_{y_1+\delta}^{y_2-\delta}+
+2ik\int_{y_1+\delta}^{y_2-\delta}\frac{\varphi \overline{w}}{1-y^2-\lambda}\mathrm{d}y\\
&+\nu\int_{y_1+\delta}^{y_2-\delta}\frac{2yw'\overline{w}}{(1-y^2-\lambda)^2}\mathrm{d}y+ik\int_{y_1+\delta}^{y_2-\delta}|w|^2\mathrm{d}y\bigg)\bigg|\\
\geq &-\nu\Vert w'\overline{w}\Vert_{L^\infty(B(y_1,\delta)\cup B(y_2,\delta))}\left(\frac{1}{|y_1^2-(y_1+\delta)^2|}+\frac{1}{|y_2^2-(y_2-\delta)^2|}\right)\\
&-\nu\Vert w \Vert_{L^\infty}\Vert w' \Vert_{L^2}\left\Vert \frac{2y}{(1-y^2-\lambda)^2}\right\Vert_{L^2(y_1+\delta,y_2-\delta)}\\
&+\frac{|k|}{2}\int_{y_1+\delta}^{y_2-\delta}|w|^2\mathrm{d}y-|k|\int_{y_1+\delta}^{y_2-\delta}\frac{2|\varphi|^2}{(1-y^2-\lambda)^2}\mathrm{d}y,
\end{aligned}
\end{equation}
which gives
\begin{align*}
\frac{|k|}{2}\int_{y_1+\delta}^{y_2-\delta}|w|^2\mathrm{d}y\leq &\Vert F\Vert_{L^2}\left\Vert \frac{w}{(1-y^2-\lambda)}\right\Vert_{L^2(y_1+\delta,y_2-\delta)}\\
&+\frac{2\nu \Vert w'\overline{w}\Vert_{L^\infty(B(y_1,\delta)\cup B(y_2,\delta))}}{|y_1^2-(y_1-\delta)^2|}\\
&+\nu\Vert w \Vert_{L^\infty}\Vert w' \Vert_{L^2}\left\Vert \frac{2y}{(1-y^2-\lambda)^2}\right\Vert_{L^2(y_1+\delta,y_2-\delta)}\\
&+|k|\int_{y_1+\delta}^{y_2-\delta}\frac{2|\varphi|^2}{(1-y^2-\lambda)^2}\mathrm{d}y,
\end{align*}
which along with $y_2-y_1 \geq 4 \delta$ gives
\begin{equation}\label{eq:in-dw}
\begin{aligned}
\Vert w \Vert_{L^2(y_1,y_2)}^2 \lesssim &\frac{1}{|k|\delta(y_2-y_1)}\Vert F\Vert_{L^2}\Vert w \Vert_{L^2}+\frac{\nu\Vert w'\overline{w}\Vert_{L^\infty(B(y_1,\delta)\cup B(y_2,\delta))}}{|k|\delta (y_2-y_1)}\\
&+\frac{\nu\Vert w'\Vert_{L^2}\Vert w \Vert_{L^\infty}}{|k|\delta^{\frac{3}{2}}(y_2-y_1)}+\delta \Vert w \Vert_{L^\infty}^2+\int_{y_1+\delta}^{y_2-\delta}\frac{2|\varphi|^2}{(1-y^2-\lambda)^2}\mathrm{d}y.
\end{aligned}
\end{equation}

Next we estimate the last term  as follows
$$\int_{y_1+\delta}^{y_2-\delta}\frac{2|\varphi|^2}{(1-y^2-\lambda)^2}\mathrm{d}y \le
 \int_{y_1+\delta}^{y_2-\delta}\frac{2|\varphi_1|^2}{(1-y^2-\lambda)^2}\mathrm{d}y+\int_{y_1+\delta}^{y_2-\delta}\frac{2|\varphi_2|^2}{(1-y^2-\lambda)^2}\mathrm{d}y.$$
First of all, we have
\begin{equation}\label{eq:in-phi2}
\begin{aligned}
\int_{y_1+\delta}^{y_2-\delta}\frac{2|\varphi_2|^2}{(1-y^2-\lambda)^2}\mathrm{d}y \lesssim &\Vert \varphi_2\Vert_{L^\infty}^2 \int_{y_1+\delta}^{y_2-\delta}\frac{2}{(1-y^2-\lambda)^2}\mathrm{d}y\\
\lesssim&\frac{\Vert \varphi_2\Vert_{L^\infty}^2 }{(y_2-y_1)^2\delta} \lesssim \mathcal{E}_2(w).
\end{aligned}
\end{equation}
It remains to prove that
\ben\label{eq:in-phi1}
\int_{y_1+\delta}^{y_2-\delta}\frac{2|\varphi_1|^2}{(1-y^2-\lambda)^2}\mathrm{d}y \lesssim \mathcal{E}_2(w).
\een
Then the lemma follows from \eqref{eq:in-dw}, \eqref{eq:in-phi1} and \eqref{eq:in-phi2}.\smallskip

Thanks to Lemma \ref{lem:hardy}, (\ref{eq:energy-key}) and (\ref{eq:in-w}), we get
\begin{equation}\label{eq:in-phi1-est1}
\begin{aligned}
&\int_{y_1+\delta}^{y_2-\delta}\frac{2|\varphi_1|^2}{(1-y^2-\lambda)^2}\mathrm{d}y \\ &\lesssim\frac{2}{(y_2-y_1)^2}\int_{y_1}^{y_2}(1-y^2-\lambda)^2\left|\left(\frac{\varphi_1}{(1-y^2-\lambda)}\right)'\right|^2\mathrm{d}y\\
&\qquad+\frac{2|\varphi_1(0)|^2}{(y_2-y_1)^3}\\
&\lesssim \frac{1}{(y_2-y_1)^2}\left( \int_{y_1}^{y_2} (1-y^2-\lambda)|w|^2\mathrm{d}y+\langle 2 \varphi_1,w \chi_{(y_1,y_2)}\rangle\right)\\
&\qquad+\frac{2|\varphi_1(0)|^2}{(y_2-y_1)^3}\\
&\lesssim \mathcal{E}_2(w)+\frac{2|\varphi_1(0)|^2}{(y_2-y_1)^3}.
\end{aligned}
\end{equation}

To control $\frac{2|\varphi_1(0)|^2}{(y_2-y_1)^3}$, we notice that for any $\delta \leq \theta \leq \frac{y_2-y_1}{4}$,
\begin{equation}\nonumber
\begin{aligned}
\int_0^{y_2-\theta}&\left(\frac{\varphi_1(y)}{1-y^2-\lambda}\right)'(y_2-\theta-y)\mathrm{d}y\\
&-\int_{y_1+\theta}^{0}\left(\frac{\varphi_1(y)}{1-y^2-\lambda}\right)'(y-y_1-\theta)\mathrm{d}y\\
=&\int_{y_1+\theta}^{y_2-\theta}\frac{\varphi_1(y)}{1-y^2-\lambda}\mathrm{d}y-\frac{\varphi_1(0)}{1-\lambda}(y_2-y_1-2\theta),
\end{aligned}
\end{equation}
which gives
\begin{align*}
&\frac{\varphi_1(0)}{1-\lambda}(y_2-y_1-2\theta)\leq \left|\int_{y_1+\theta}^{y_2-\theta}\frac{\varphi_1(y)}{1-y^2-\lambda}\mathrm{d}y\right|\\
&\qquad+\int_{y_1+\theta}^{y_2-\theta}\left|\left(\frac{\varphi_1(y)}{1-y^2-\lambda}\right)'\right|\min(y_2-y,y-y_1)\mathrm{d}y\\
&\lesssim \left|\int_{y_1+\theta}^{y_2-\theta}\frac{\varphi_1(y)}{1-y^2-\lambda}\mathrm{d}y\right|\\
&\qquad+(y_2-y_1)^{\frac{1}{2}}\left(\int_{y_1+\theta}^{y_2-\theta}\left|\left(\frac{\varphi_1(y)}{1-y^2-\lambda}\right)'\right|^2\min(y_2-y,y-y_1)^2\mathrm{d}y\right)^{\frac{1}{2}}\\
&\lesssim \left|\int_{y_1+\theta}^{y_2-\theta}\frac{\varphi_1(y)}{1-y^2-\lambda}\mathrm{d}y\right|\\
&\qquad+(y_2-y_1)^{\frac{1}{2}}\left(\int_{y_1+\theta}^{y_2-\theta}\left|\left(\frac{\varphi_1(y)}{1-y^2-\lambda}\right)'\right|^2\frac{(1-y^2-\lambda)^2}{(y_2-y_1)^2}\mathrm{d}y\right)^{\frac{1}{2}}.
\end{align*}
Thanks to $1-\lambda\thicksim(y_2-y_1)^2$, we obtain
\begin{equation}\label{eq:in-phi1-b}
\begin{aligned}
\frac{|\varphi_1(0)|^2}{(y_2-y_1)^3}\lesssim &\frac{\left|\int_{y_1+\theta}^{y_2-\theta}\frac{\varphi_1(y)}{1-y^2-\lambda}\mathrm{d}y\right|^2}{y_2-y_1}
\\
&+\int_{y_1+\theta}^{y_2-\theta}\left|\left(\frac{\varphi_1(y)}{1-y^2-\lambda}\right)'\right|^2\frac{(1-y^2-\lambda)^2}{(y_2-y_1)^2}\mathrm{d}y.
\end{aligned}
\end{equation}
By (\ref{eq:energy-key}) and (\ref{eq:in-w}), we have
\ben\label{eq:in-phi1-b0}
\int_{y_1+\theta}^{y_2-\theta}\left|\left(\frac{\varphi_1(y)}{1-y^2-\lambda}\right)'\right|^2\frac{(1-y^2-\lambda)^2}{(y_2-y_1)^2}\mathrm{d}y\lesssim \mathcal{E}_2(w).
\een

To control $\frac1 {y_2-y_1}{\left|\int_{y_1+\theta}^{y_2-\theta}\frac{\varphi_1(y)}{1-y^2-\lambda}\mathrm{d}y\right|^2}$, we consider two cases.\smallskip

\noindent{\bf Case 1.} $|k|^2(y_2-y_1)^2 \geq 1$. In this case, we take $\theta=\frac{y_2-y_1}{4}$ and obtain
\begin{equation}\nonumber
\begin{aligned}
\frac{1}{y_2-y_1}\left|\int_{y_1+\theta}^{y_2-\theta}\frac{\varphi_1(y)}{1-y^2-\lambda}\mathrm{d}y\right|^2 \leq &\frac{1}{y_2-y_1}\int_{y_1+\theta}^{y_2-\theta}|\varphi_1|^2\mathrm{d}y\int_{y_1+\theta}^{y_2-\theta}\frac{1}{(1-y^2-\lambda)^2}\mathrm{d}y\\
\lesssim&\frac{1}{(y_2-y_1)^3\theta}\int_{y_1+\theta}^{y_2-\theta}|\varphi_1|^2\mathrm{d}y\\
\lesssim &\frac{|k|^2}{(y_2-y_1)^2}\int_{y_1+\theta}^{y_2-\theta}|\varphi_1|^2\mathrm{d}y,
\end{aligned}
\end{equation}
which along with (\ref{eq:energy-key}) and (\ref{eq:in-w}) gives
\begin{equation}\label{eq:in-phi1-b1}
\frac{1}{y_2-y_1}\left|\int_{y_1+\theta}^{y_2-\theta}\frac{\varphi_1(y)}{1-y^2-\lambda}\mathrm{d}y\right|^2 \lesssim \mathcal{E}_2(w).
\end{equation}

\noindent{\bf Case 2.} $|k|^2(y_2-y_1)^2 \leq 1$.  In this case, we take $\theta=\delta$ and introduce
\begin{equation}\nonumber
\begin{aligned}
\chi(y)=\eta\left(\frac{y}{y_2-y_1}\right) \ \mathrm{with} \ \ \eta(z)=\left \{
\begin{array}{lll}
1,&|z| \leq 1,\\
0,&|z| \geq 2.
\end{array}
\right.
\end{aligned}
\end{equation}
We get by integration by parts that
$$\left|\int_{-1}^{1}\varphi'\chi'+|k|^2\varphi \chi\mathrm{d}y\right|=\left|\int_{-1}^{1}-w(y)\chi(y)\mathrm{d}y\right|.$$
Due to $\chi(y)=1$ for $y \in [y_1+\theta,y_2-\theta]$, we have
\begin{equation}\nonumber
\begin{aligned}
\left|\int_{y_1+\delta}^{y_2-\delta}w(y)\mathrm{d}y\right| \leq &\left|\int_{-1}^{1}\varphi'\chi'+|k|^2\varphi \chi\mathrm{d}y\right|+4\delta \Vert w\Vert_{L^\infty}\\
&+\int_{(-1,1)\setminus (y_1-\delta,y_2+\delta)}|w(y)|\mathrm{d}y.
\end{aligned}
\end{equation}
Recall that $F=-\nu(\partial_y^2-|k|^2)w+ik\big[(1-y^2-\lambda)w+2\varphi\big]$, which gives $w=\frac{-i\frac{1}{k}\big(F+\nu(\partial_y^2-|k|^2)w\big)-2\varphi}{(1-y^2-\lambda)}$ and then
\begin{equation}\nonumber
\begin{aligned}
&\left|\int_{y_1+\delta}^{y_2-\delta}\frac{2\varphi}{1-y^2-\lambda}\mathrm{d}y\right|-\left|\int_{y_1+\delta}^{y_2-\delta}\frac{i\frac{1}{k}(F+\nu(\partial_y^2-|k|^2)w)}{(1-y^2-\lambda)}\mathrm{d}y\right|\\
&\leq \left|\int_{-1}^{1}\varphi'\chi'+|k|^2\varphi \chi\mathrm{d}y\right|+4\delta \Vert w\Vert_{L^\infty}
+\int_{(-1,1)\setminus (y_1-\delta,y_2+\delta)}|w(y)|\mathrm{d}y,
\end{aligned}
\end{equation}
which yields that
\begin{equation}\nonumber
\begin{aligned}
&\frac{1}{y_2-y_1}\left|\int_{y_1+\delta}^{y_2-\delta}\frac{2\varphi_1}{1-y^2-\lambda}\mathrm{d}y\right|^2 \\
&\lesssim \frac{\delta^2}{y_2-y_1}\Vert w \Vert_{L^\infty}^2+\frac{1}{|k|^2(y_2-y_1)}\left|\int_{y_1+\delta}^{y_2-\delta} \frac{F}{1-y^2-\lambda}\mathrm{d}y\right|^2\\
& \quad+\frac{\nu^2}{|k|^2(y_2-y_1)}\left|\int_{y_1+\delta}^{y_2-\delta}\frac{w''}{1-y^2-\lambda}\mathrm{d}y\right|^2\\
&\quad+\frac{1}{y_2-y_1}\left|\int_{y_1+\delta}^{y_2-\delta}\frac{2\varphi_2}{1-y^2-\lambda}\mathrm{d}y\right|^2 +\frac{1}{y_2-y_1}\left|\int_{-1}^{1}\varphi'\chi'+|k|^2\varphi \chi\mathrm{d}y\right|^2\\
&\quad+\frac{1}{y_2-y_1}\left|\int_{(-1,1)\setminus (y_1-\delta,y_2+\delta)}|w(y)|\mathrm{d}y\right|^2
+\frac{\nu^2|k|^2}{y_2-y_1}\left|\int_{y_1+\delta}^{y_2-\delta}\frac{w}{1-y^2-\lambda}\mathrm{d}y\right|^2\\
&:=I_1+\cdots I_7.
\end{aligned}
\end{equation}

Thanks to $y_2-y_1 \geq 4\delta$, we have
$$I_1 \leq \delta \Vert w \Vert_{L^\infty}^2 \lesssim \mathcal{E}_2(w).$$
By \eqref{eq:P-L2-in}, we get
\begin{align*}
&I_2 \lesssim \frac{\Vert F\Vert_{L^2}^2}{|k|^2(y_2-y_1)}\left\Vert \frac{1}{1-y^2-\lambda}\right\Vert_{L^2(y_1+\delta,y_2-\delta)}^2\lesssim \frac{\Vert F\Vert_{L^2}^2}{|k|^2(y_2-y_1)^3\delta}\leq \mathcal{E}_2(w),\\
&I_4 \lesssim \Vert \varphi_2 \Vert_{L^\infty}^2 \left\Vert \frac{1}{1-y^2-\lambda}\right\Vert_{L^2(y_1+\delta,y_2-\delta)}^2 \lesssim\frac{\Vert \varphi_2\Vert_{L^\infty}^2}{(y_2-y_1)^2\delta} \leq \mathcal{E}_2(w).
\end{align*}
We get by integration by parts that
\begin{equation}\nonumber
\begin{aligned}
I_3=&\frac{\nu^2}{|k|^2(y_2-y_1)}\left|\frac{w'}{1-y^2-\lambda}\bigg|_{y_1+\delta}^{y_2-\delta}+\int_{y_1+\delta}^{y_2-\delta}\frac{w'(-2y)}{(1-y^2-\lambda)^2}\mathrm{d}y\right|^2\\
 \lesssim &\frac{\nu^2}{|k|^2}\left(\frac{\Vert w'\Vert_{L^\infty(B(y_1,\delta)\cup B(y_2,\delta))}^2}{\delta^2(y_2-y_1)^3}+\frac{\Vert w'\Vert_{L^2}^2}{\delta^3(y_2-y_1)^3}\right) \leq \mathcal{E}_2(w).
\end{aligned}
\end{equation}
By (\ref{eq:out-w-L1}), we have
\begin{equation}\nonumber
\begin{aligned}
I_6=&\frac{1}{y_2-y_1}\Vert w \Vert_{L^1((-1,1)\setminus (y_1-\delta,y_2+\delta))}^2 \\
\leq &\frac{C\mathcal{E}_1(w)(y_2-y_1+\delta)}{y_2-y_1} \lesssim \mathcal{E}_1(w) \lesssim\mathcal{E}_2(w).
\end{aligned}
\end{equation}

For $I_5$, we first notice  that
\begin{align*}
I_5 \lesssim &\frac{1}{y_2-y_1}\bigg(\Vert \varphi_1'\Vert_{L^2((-1,1)\setminus (y_1-\delta,y_2+\delta))}^2\Vert \chi'\Vert_{L^2}^2\\
&+\Vert \varphi_2'\Vert_{L^1}^2\Vert \chi'\Vert_{L^\infty}^2+|k|^4\Vert \varphi \Vert_{L^1(B(0,2(y_2-y_1))}^2\bigg)\\
\lesssim&\frac{1}{y_2-y_1}\bigg(\frac{\Vert \varphi_1'\Vert_{L^2((-1,1)\setminus (y_1-\delta,y_2+\delta))}^2}{(y_2-y_1)}\\
&\quad+\frac{\Vert \varphi_2'\Vert_{L^1}^2}{(y_2-y_1)^2}+|k|^4\left(\int_{-2(y_2-y_1)}^{2(y_2-y_1)}|\varphi|\mathrm{d}y\right)^2\bigg)\\
:=&I_5^1+I_5^2+I_5^3.
\end{align*}
By (\ref{eq:energy-cut2}), (\ref{eq:out-w}) and (\ref{eq:out-w-L1}), we have
\begin{equation}\nonumber
\begin{aligned}
I_5^1 \lesssim &\frac{1}{(y_2-y_1)^2}\Vert \varphi_1'\Vert_{L^2((-1,1)\setminus (y_1,y_2))}^2
\lesssim \frac{1}{2(y_2-y_1)^2}\big\langle -2\varphi_1,w\chi_{(-1,1)\setminus (y_1,y_2)}\big\rangle\\
\lesssim&\frac{1}{(y_2-y_1)^2}\bigg(\frac{\Vert F\Vert_{L^2}\Vert w \Vert_{L^2}}{|k|}+\frac{\nu}{|k|}\Vert w'\overline{w}\Vert_{L^\infty(B(y_1,\delta)\cup B(y_2,\delta))}\\
&\qquad+\left| \int_{(-1,1)\setminus (y_1,y_2)}2\varphi_2\overline{w}\mathrm{d}y\right|\bigg)\\
\lesssim&\mathcal{E}_2(w)+\frac{\Vert \varphi_2\Vert_{L^\infty}}{(y_2-y_1)^2}\big(\delta\Vert w\Vert_{L^\infty}+\Vert w \Vert_{L^1((-1,1)\setminus (y_1-\delta,y_2+\delta))}\big)\\
\lesssim&\mathcal{E}_2(w).
\end{aligned}
\end{equation}
Thanks to the definition of $\varphi_2$ and the monotonicity of $\sinh$, we have
$$I_5^2=\frac{\Vert\varphi_2'\Vert_{L^1}^2}{(y_2-y_1)^3}\lesssim\frac{\Vert\varphi_2\Vert_{L^\infty}^2}{(y_2-y_1)^3} \lesssim \frac{\Vert\varphi_2\Vert_{L^\infty}^2}{(y_2-y_1)^2\delta} \lesssim \mathcal{E}_2(w).$$
Due to $|k|^2(y_2-y_1)^2 \leq 1$, we get by \eqref{eq:energy-key}-(\ref{eq:in-w}) and \eqref{eq:energy-cut2}-\eqref{eq:out-w} that
\begin{equation}\nonumber
\begin{aligned}
I_5^3 \lesssim &\frac{|k|^4\left(\int_{-2(y_2-y_1)}^{2(y_2-y_1)}|\varphi_1|\mathrm{d}y\right)^2+|k|^4\left(\int_{-2(y_2-y_1)}^{2(y_2-y_1)}|\varphi_2|\mathrm{d}y\right)^2}{y_2-y_1}\\
\lesssim&|k|^4 \int_{-2(y_2-y_1)}^{2(y_2-y_1)}|\varphi_1|^2\mathrm{d}y+|k|^4(y_2-y_1)\Vert \varphi_2\Vert_{L^\infty}^2\\
\lesssim&\frac{1}{2(y_2-y_1)^2}2|k|^2\int_{-2(y_2-y_1)}^{2(y_2-y_1)}|\varphi_1|^2\mathrm{d}y+\frac{\Vert\varphi_2\Vert_{L^\infty}^2}{(y_2-y_1)^2\delta}\\
\lesssim&\mathcal{E}_2(w).
\end{aligned}
\end{equation}
This shows that
$$I_5 \lesssim\mathcal{E}_2(w).$$

For $I_7$, we have
\begin{equation}\nonumber
\begin{aligned}
I_7 \lesssim &\frac{\nu^2 |k|^2(y_2-y_1-2\delta)}{y_2-y_1}\int_{y_1+\delta}^{y_2-\delta} \frac{|w|^2}{(1-y^2-\lambda)^2}\mathrm{d}y\\
\lesssim &\frac{\nu^2}{|k|^2(y_2-y_1)^3\delta}\int_{y_1+\delta}^{y_2-\delta} \frac{|w|^2}{(1-y^2-\lambda)^2}\mathrm{d}y.
\end{aligned}
\end{equation}
On the other hand, we get by Hardy's inequality that
\begin{equation}\nonumber
\begin{aligned}
&\int_{y_1+\delta}^{y_2-\delta} \frac{|w|^2}{(1-y^2-\lambda)^2}\mathrm{d}y \\
&\lesssim \frac{1}{(y_2-y_1-\delta)\delta}\bigg(\int_{y_1+\delta}^0 \frac{\left|\int_{y_1}^y w'\mathrm{d}z\right|^2}{(y-y_1)^2}+\frac{\left|w(y_1)\right|^2}{(y-y_1)^2}\mathrm{d}y\\
&\qquad+\int_{0}^{y_2-\delta} \frac{\left|\int_{y_2}^y w'\mathrm{d}z\right|^2}{(y-y_2)^2}+\frac{\left|w(y_2)\right|^2}{(y-y_2)^2}\mathrm{d}y\bigg)\\
&\lesssim \frac{1}{\delta^2}\bigg(\int_{y_1+\delta}^0 |w'|^2+\frac{\left|w(y_1)\right|^2}{(y-y_1)^2}\mathrm{d}y+\int_{0}^{y_2-\delta} |w'|^2+\frac{\left|w(y_2)\right|^2}{(y-y_2)^2}\mathrm{d}y\bigg)\\
&\lesssim \frac{1}{\delta^2}\Vert w' \Vert_{L^2}^2+\frac{1}{\delta^3}\Vert w \Vert_{L^\infty}^2,
\end{aligned}
\end{equation}
which shows that
\begin{equation}\nonumber
\begin{aligned}
I_7 \lesssim &\frac{\nu^2}{|k|^2(y_2-y_1)^3\delta}\left(\frac{1}{\delta^2}\Vert w' \Vert_{L^2}^2+\frac{1}{\delta^3}\Vert w \Vert_{L^\infty}^2\right)\\
\lesssim &\frac{\nu^2}{|k|^2(y_2-y_1)^3\delta^3}\Vert w' \Vert_{L^2}^2+\frac{\nu^2}{|k|^2(y_2-y_1)^3\delta^4}\Vert w \Vert_{L^\infty}^2\\
\lesssim&\mathcal{E}_2(w).
\end{aligned}
\end{equation}

Summing up the estimates for $I_i(i=1,\cdots,7)$, we conclude that
\ben\label{eq:in-phi1-b2}
\int_{y_1+\delta}^{y_2-\delta}\frac{2|\varphi_1|^2}{(1-y^2-\lambda)^2}\mathrm{d}y \lesssim \mathcal{E}_2(w).
\een

Now \eqref{eq:in-phi1} follows from \eqref{eq:in-phi1-est1}, \eqref{eq:in-phi1-b1}, \eqref{eq:in-phi1-b0}, \eqref{eq:in-phi1-b1} and \eqref{eq:in-phi1-b2}.
\end{proof}

The following two lemmas give $L^\infty$ estimate of $\varphi_2$
and $w'$ in terms of $\|F\|_{L^2}, \|w\|_{L^2}$ and $\|w\|_{L^\infty}$.

\begin{lemma}\label{lem:varphi-Linfty}
For any $\nu \in (0,1],\lambda\in [0,1]$ and $\delta \in (0,1]$, there holds that
\begin{equation}\nonumber
\begin{aligned}
\frac{\Vert \varphi_2 \Vert_{L^\infty}^2}{(y_2-y_1+\delta)^2\delta} \lesssim \mathcal{F}_1(w),
\end{aligned}
\end{equation}
where
\begin{equation}\nonumber
\begin{aligned}
\mathcal{F}_1(w)=\frac{\Vert F\Vert_{L^2}^2}{|k|^2\delta^2(y_2-y_1+\delta)^2}+\frac{\nu\Vert F\Vert_{L^2}\Vert w \Vert_{L^2}}{|k|^2\delta^4(y_2-y_1+\delta)^2}+\delta \Vert w \Vert_{L^\infty}^2.
\end{aligned}
\end{equation}
\end{lemma}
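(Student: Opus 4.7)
The plan is to reduce the estimate of $\|\varphi_2\|_{L^\infty}$ to pointwise bounds on $\varphi$ at the two critical points and then to extract these bounds from the Orr--Sommerfeld equation via a localized test function. The explicit formula for $\varphi_2$ combined with the convexity inequality $\sinh a + \sinh b \leq \sinh(a+b)$ for $a,b\ge 0$ yields $\|\varphi_2\|_{L^\infty} \leq |\varphi(y_1)| + |\varphi(y_2)|$, so it suffices to prove, for each $i\in\{1,2\}$,
\begin{equation*}
|\varphi(y_i)|^2 \lesssim \frac{\|F\|_{L^2}^2}{|k|^2\delta} + \frac{\nu\|F\|_{L^2}\|w\|_{L^2}}{|k|^2\delta^3} + (y_2-y_1+\delta)^2\delta^2\|w\|_{L^\infty}^2,
\end{equation*}
which, after dividing by $(y_2-y_1+\delta)^2\delta$, matches $\mathcal{F}_1(w)$ term by term.

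To prove this I would choose a smooth nonnegative bump $\chi_i$ supported in $B(y_i,\delta)\cap(-1,1)$, normalized so that $\int\chi_i=1$ and $\|\chi_i^{(j)}\|_{L^2}\lesssim \delta^{1/2-j}$. Pairing \eqref{eq:res} with $\chi_i$ and integrating by parts once on the viscous term produces
\begin{equation*}
2ik\int\chi_i\varphi = \int \chi_i F - \nu\int w'\chi_i' - \nu|k|^2\int\chi_i w - ik\int (1-y^2-\lambda)w\chi_i.
\end{equation*}
The key estimate for the viscous contribution comes from the basic energy identity obtained by testing \eqref{eq:res} with $\bar w$: $\nu(\|w'\|_{L^2}^2+|k|^2\|w\|_{L^2}^2)=\mathrm{Re}\langle F,w\rangle\le \|F\|_{L^2}\|w\|_{L^2}$, so $\nu\|w'\|_{L^2}\le \sqrt{\nu\|F\|_{L^2}\|w\|_{L^2}}$ and therefore
\begin{equation*}
\Bigl|\nu\int w'\chi_i'\Bigr|\lesssim \sqrt{\nu\|F\|_{L^2}\|w\|_{L^2}}\,\delta^{-3/2},
\end{equation*}
which, after squaring and dividing by $|k|^2$, reproduces exactly the middle term. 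The bound $|\int\chi_i F|\lesssim \|F\|_{L^2}\delta^{-1/2}$ supplies the first term, and the factorization $1-y^2-\lambda=-(y-y_1)(y-y_2)$ yields $|1-y^2-\lambda|\le(y_2-y_1+\delta)\delta$ on $\mathrm{supp}\,\chi_i$, so that $|ik\int(1-y^2-\lambda)w\chi_i|\lesssim |k|(y_2-y_1+\delta)\delta\|w\|_{L^\infty}$ supplies the third. The remaining $\nu|k|^2\int\chi_i w$ contribution is absorbed into these three using $\|\chi_i\|_{L^1}=1$ together with the assumptions $\nu\leq 1$ and $\delta\leq 1$.

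One then replaces $\int\chi_i\varphi$ by $\varphi(y_i)$. Writing $\varphi=\varphi_1+\varphi_2$ and exploiting $\varphi_1(y_i)=0$, the error decomposes as $\int\chi_i(\varphi_2-\varphi_2(y_i))+\int\chi_i\varphi_1$; the first piece is controlled by the smoothness of the harmonic extension $\varphi_2$ (whose scale is $|k|^{-1}$), and the second piece is handled by a Hardy-type inequality near $y_i$, giving $|\int\chi_i\varphi_1|\lesssim \delta^{1/2}\|\varphi_1'\|_{L^2(B(y_i,\delta))}$ with $\|\varphi_1'\|_{L^2}$ controlled by the energy estimate for $\varphi_1$. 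Squaring the resulting bound for $2|k||\varphi(y_i)|$ and dividing by $|k|^2(y_2-y_1+\delta)^2\delta$ yields the lemma.

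The main obstacle is verifying that both the $\nu|k|^2\int\chi_i w$ term and the averaging error absorb cleanly into the three-term structure of $\mathcal F_1(w)$; the decisive ingredient that produces the tight cross product $\nu\|F\|_{L^2}\|w\|_{L^2}$ (rather than a weaker sum of pure squares) is the application of the energy identity $\nu\|w'\|_{L^2}^2\le \|F\|_{L^2}\|w\|_{L^2}$ to the viscous term after precisely one integration by parts against $\chi_i'$.
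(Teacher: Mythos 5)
Your overall strategy --- reduce $\Vert\varphi_2\Vert_{L^\infty}$ to the point values $|\varphi(y_i)|$, average the Orr--Sommerfeld equation against an $L^1$-normalized bump of width $\delta$ near $y_i$, and control the viscous contributions through the energy identity $\nu(\Vert w'\Vert_{L^2}^2+|k|^2\Vert w\Vert_{L^2}^2)\leq\Vert F\Vert_{L^2}\Vert w\Vert_{L^2}$ --- is exactly the paper's (which follows Lemma 3.10 of \cite{LWZ2}), and your bounds for the three source terms $\int\chi_iF$, $\nu\int w'\chi_i'$ and $ik\int(1-y^2-\lambda)w\chi_i$ reproduce the three terms of $\mathcal{F}_1(w)$ correctly. (Minor point: the stated normalization $\Vert\chi_i^{(j)}\Vert_{L^2}\lesssim\delta^{1/2-j}$ should read $\delta^{-1/2-j}$ for an $L^1$-normalized bump; your displayed estimates use the correct scaling. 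Also, the term $\nu|k|^2\int\chi_i w$ should be absorbed via $\nu|k|^2\Vert w\Vert_{L^2}\leq\Vert F\Vert_{L^2}$ and $\Vert\chi_i\Vert_{L^2}\lesssim\delta^{-1/2}$, not via $\Vert\chi_i\Vert_{L^1}=1$ and $\nu,\delta\leq1$, which would leave $\nu|k|^2\Vert w\Vert_{L^\infty}$ and does not fit into $\mathcal{F}_1$.)

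The genuine gap is in the final step, where you replace $\int\chi_i\varphi$ by $\varphi(y_i)$. Your decomposition of the error into $\int\chi_i\varphi_1+\int\chi_i(\varphi_2-\varphi_2(y_i))$ does not close within $\mathcal{F}_1(w)$. The bound $|\int\chi_i\varphi_1|\lesssim\delta^{1/2}\Vert\varphi_1'\Vert_{L^2(B(y_i,\delta))}$ forces you to estimate $\Vert\varphi_1'\Vert_{L^2}$, and the only available estimate (testing the equation against $w$ localized to the relevant interval, as in \eqref{eq:out-w} and \eqref{eq:in-w}) produces the quantities $|k|^{-1}\Vert F\Vert_{L^2}\Vert w\Vert_{L^2}$, $\nu|k|^{-1}\Vert w'\overline{w}\Vert_{L^\infty}$ and $|\int\varphi_2\overline{w}|$ appearing in $\mathcal{E}_1(w)$ --- none of which belongs to $\mathcal{F}_1(w)$, and the last of which reintroduces $\Vert\varphi_2\Vert_{L^\infty}$, making the argument circular: the entire point of this lemma is that $\mathcal{F}_1$ involves only $F$ and $w$, so that it can later be fed into $\mathcal{E}_1$ and $\mathcal{E}_2$. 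The $\varphi_2$-oscillation piece is also not uniformly absorbable: when $y_2$ lies within $O(\delta)$ of the boundary, one only has $|\varphi_2(y)-\varphi_2(y_2)|\leq|\varphi(y_2)|$ with constant $1$, so it cannot be moved to the left-hand side. The fix, and what the paper actually does, is to use a \emph{symmetric} average $\frac{1}{2b}\int_{a-b}^{a+b}$ about the evaluation point with $b\in[\delta/2,\delta]$: the first-order Taylor term cancels and the replacement error is $b^2\Vert\varphi''\Vert_{L^\infty}\lesssim\delta^2\Vert w\Vert_{L^\infty}$ (using $\Vert\varphi''\Vert_{L^\infty}\leq\Vert w\Vert_{L^\infty}+|k|^2\Vert\varphi\Vert_{L^\infty}\lesssim\Vert w\Vert_{L^\infty}$ by the maximum principle), which after squaring and dividing by $(y_2-y_1+\delta)^2\delta$ is $\lesssim\delta\Vert w\Vert_{L^\infty}^2$, exactly the last term of $\mathcal{F}_1(w)$. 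With that single modification your argument coincides with the paper's.
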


\begin{proof}
We will follow the proof of Lemma 3.10 in \cite{LWZ2}.
It suffices to estimate $\|\varphi\|_{L^\infty\big(B(y_1,\delta)\cup B(y_2,\delta)\big)}$.
Recall (3.24) in \cite{LWZ2}: for $a,b>0$,
\beno
|\varphi(a)|\le \frac 1 {2b}\Big|\int_{a-b}^{a+b}\varphi(y)\mathrm{d}y\Big|+b^2\|\varphi''\|_{L^\infty}
\eeno
Then for any $a\in B(y_1,\delta)$, there exists $b\in [\delta/2,\delta]$ so that
\begin{align*}
|\varphi(a)|\lesssim& \frac 1 {b|k|}\Big(\int_{a-b}^{a+b}|F+\nu |k|^2w|\mathrm{d}y+\nu\big(|w'(a+b)|+|w'(a-b)|\big)\\&\qquad+\int_{a-b}^{a+b}|1-y^2-\lambda|\mathrm{d}y\|w\|_{L^\infty}\Big)+b^2\|\varphi''\|_{L^\infty}\\
\lesssim& \frac 1 {\delta|k|}\Big(\delta^\frac 12\|F\|_{L^2}+\frac \nu{\delta^\frac 12}\|w'\|_{L^2}\\&\qquad+\delta^2(y_2-y_1+\delta)\|w\|_{L^\infty}\Big)+\delta^2\|w\|_{L^\infty},
\end{align*}
which implies our result. Here we used $\|\varphi''\|_{L^\infty}\le \|w\|_{L^\infty}$ and
\beno
\int_{a-b}^{a+b}\nu |k|^2|w|\mathrm{d}y\lesssim \delta^\frac 12\nu|k|^2\|w\|_{L^2}
\lesssim \delta^\frac 12\|F\|_{L^2}.
\eeno
\end{proof}

\begin{lemma}\label{lem:dw-Linfty}
For any $\nu \in (0,1],\lambda\in [0,1]$ and $\delta \in (0,1]$, there holds that
\begin{equation}\nonumber
\begin{aligned}
\delta^3\Vert w'\Vert_{L^\infty(B(y_1,\delta)\cup B(y_2,\delta))}^2\lesssim \mathcal{F}_2(w),
\end{aligned}
\end{equation}
where
\begin{equation}\nonumber
\begin{aligned}
\mathcal{F}_2(w)=\frac{\delta^6(y_2-y_1+\delta)^2|k|^2}{\nu^2}\mathcal{F}_1(w)+\frac{\delta^4}{\nu^2}\Vert F  \Vert_{L^2}^2+\delta\|w\|_{L^\infty}^2.
\end{aligned}
\end{equation}
\end{lemma}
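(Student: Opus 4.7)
The plan is to mimic the approach of the analogous lemma in \cite{LWZ2}. First I would rearrange the Orr-Sommerfeld equation \eqref{eq:res} to isolate $w''$:
\beno
w''(y) = |k|^2 w(y) + \frac{ik}{\nu}\bigl[(1-y^2-\lambda)w(y) + 2\varphi(y)\bigr] - \frac{F(y)}{\nu},
\eeno
which provides pointwise control on $|w''|$ in terms of quantities already appearing in $\mathcal{F}_1(w)$ and $\mathcal{F}_2(w)$. The strategy is to recover $\|w'\|_{L^\infty(B(y_j,\delta))}$ (for $j=1,2$) from a mean-value reduction combined with integration of this expression.

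For any $a\in B(y_j,\delta)$ and $b\in[\delta/2,\delta]$, expanding $w(a\pm b)$ to second order around $a$ gives
\beno
2b\,w'(a) = w(a+b) - w(a-b) - \int_a^{a+b}(a+b-s)w''(s)\,\mathrm{d}s + \int_{a-b}^{a}(s-a+b)w''(s)\,\mathrm{d}s,
\eeno
so $|w'(a)| \lesssim \|w\|_{L^\infty}/\delta + \int_{a-\delta}^{a+\delta}|w''(s)|\,\mathrm{d}s$. Substituting the expression for $w''$ and estimating term by term (Cauchy--Schwarz on the integration interval of length $\lesssim\delta$, together with $|1-y^2-\lambda|\lesssim\delta(y_2-y_1+\delta)$ on $B(y_j,2\delta)$) yields
\beno
|w'(a)| \lesssim \frac{\|w\|_{L^\infty}}{\delta} + \frac{\delta^{1/2}\|F\|_{L^2}}{\nu} + |k|^2\delta^{1/2}\|w\|_{L^2} + \frac{|k|\delta^2(y_2-y_1+\delta)\|w\|_{L^\infty}}{\nu} + \frac{|k|\delta\,\|\varphi\|_{L^\infty(B(y_j,2\delta))}}{\nu}.
\eeno

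The delicate term is $|k|^2\delta^{1/2}\|w\|_{L^2}$, which cannot be absorbed into $\|w\|_{L^\infty}/\delta$ in general. To control it I would invoke the basic energy identity obtained by taking the real part of the $L^2$ inner product of \eqref{eq:res} with $w$,
\beno
\nu\|w'\|_{L^2}^2 + \nu|k|^2\|w\|_{L^2}^2 \le \|F\|_{L^2}\|w\|_{L^2},
\eeno
where the contributions from $(1-y^2-\lambda)|w|^2$ and from $\int\varphi\bar w = -\|\varphi'\|_{L^2}^2 - |k|^2\|\varphi\|_{L^2}^2$ are real, so their multiplication by $ik$ produces purely imaginary terms that drop out under $\mathrm{Re}$. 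This gives $|k|^2\|w\|_{L^2}\le\|F\|_{L^2}/\nu$, collapsing $|k|^2\delta^{1/2}\|w\|_{L^2}$ into the same order as $\delta^{1/2}\|F\|_{L^2}/\nu$.

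Squaring, multiplying by $\delta^3$, and using $\mathcal{F}_1(w)\gtrsim\delta\|w\|_{L^\infty}^2$ to absorb the $\frac{|k|^2\delta^7(y_2-y_1+\delta)^2}{\nu^2}\|w\|_{L^\infty}^2$ contribution into the first summand of $\mathcal{F}_2(w)$, and invoking the $L^\infty$ bound $\|\varphi\|_{L^\infty(B(y_j,2\delta))}^2\lesssim(y_2-y_1+\delta)^2\delta\,\mathcal{F}_1(w)$ (which follows from the intermediate estimate in the proof of Lemma \ref{lem:varphi-Linfty}) then reproduces exactly the three summands of $\mathcal{F}_2(w)$. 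The main obstacle is the $|k|^2 w$ piece of $w''$: without the energy identity above, its contribution $|k|^2\delta\|w\|_{L^\infty}$ would be uncontrolled. A secondary care-point is that we need an $L^\infty$ bound on the full $\varphi=\varphi_1+\varphi_2$ on a neighbourhood of $y_j$, which is marginally stronger than the $\|\varphi_2\|_{L^\infty}$ bound stated in Lemma \ref{lem:varphi-Linfty} but is already built into its proof (since $\varphi_1(y_j)=0$, the $\varphi_1$ correction on $B(y_j,2\delta)$ is of lower order).
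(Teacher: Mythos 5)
Your proposal is correct and follows essentially the same route as the paper: isolate $w''$ from the Orr--Sommerfeld equation, bound $\|w'\|_{L^\infty(B(y_j,\delta))}$ by $\delta^{-1}\|w\|_{L^\infty}+\|w''\|_{L^1(B(y_j,\delta))}$, estimate each piece (using $|1-y^2-\lambda|\lesssim\delta(y_2-y_1+\delta)$ on the balls, the $L^\infty$ bound on $\varphi$ from the proof of Lemma \ref{lem:varphi-Linfty}, and $\nu|k|^2\|w\|_{L^2}\lesssim\|F\|_{L^2}$ to tame the $|k|^2w$ term), exactly as the paper does. The two care-points you flag — the energy identity for the $|k|^2w$ piece and the fact that Lemma \ref{lem:varphi-Linfty}'s proof actually controls $\|\varphi\|_{L^\infty}$ near $y_1,y_2$ rather than only $\|\varphi_2\|_{L^\infty}$ — are precisely the ingredients the paper uses.
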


\begin{proof}
We follow the proof of Lemma 3.11 in \cite{LWZ2}. We have
$$\Vert w'\Vert_{L^\infty(B(y_1,\delta))}\lesssim\frac{1}{\delta}\Vert w \Vert_{L^\infty}+\Vert w''\Vert_{L^1(B(y_1,\delta))},$$
where we get by \eqref{eq:res} that
\begin{equation}\nonumber
\begin{aligned}
\delta^{\frac{3}{2}}\int_{B(y_1,\delta)}|w''|\mathrm{d}y \lesssim &\frac{\delta^{\frac{3}{2}}|k|}{\nu}\int_{B(y_1,\delta)}|(1-y^2-\lambda)w|\mathrm{d}y+\frac{\delta^{\frac{3}{2}}|k|}{\nu}\int_{B(y_1,\delta)}|\varphi|\mathrm{d}y\\
&+\frac{\delta^{\frac{3}{2}}}{\nu}\int_{B(y_1,\delta)}|F|\mathrm{d}y+\delta^{\frac{3}{2}}|k|^2\int_{B(y_1,\delta)} |w|\mathrm{d}y\\
\lesssim& \frac{\delta^3(y_2-y_1+\delta)|k|}{\nu}\delta^{\frac{1}{2}}\Vert w \Vert_{L^\infty}\\
&+\frac{(y_2-y_1+\delta)\delta^{3}|k|}{\nu}\frac{\Vert \varphi\Vert_{L^\infty(B(y_1,\delta)\cup B(y_2,\delta))}}{(y_2-y_1+\delta)\delta^{\frac{1}{2}}}+\frac{\delta^2}{\nu}\Vert F   \Vert_{L^2},
\end{aligned}
\end{equation}
here we used
\beno
\delta^{\frac{3}{2}}|k|^2\int_{B(y_1,\delta)} |w|\mathrm{d}y\lesssim \delta^{2}|k|^2\|w\|_{L^2}
\lesssim \frac{\delta^2}{\nu}\Vert F   \Vert_{L^2}.
\eeno
This shows that
$$\delta^3 \Vert w'\Vert_{L^\infty(B(y_1,\delta))}^2 \lesssim\mathcal{F}_2(w).$$
The estimate in domain $B(y_2,\delta)$ is similar.
\end{proof}

Now we are in a position to prove Proposition \ref{prop:res} in the case of $\lambda\in (0,1)$.

\begin{proof}
 We consider two cases.\smallskip

\noindent\textbf{Case 1}. $\nu^{\frac{1}{4}}|k|^{-\frac{1}{4}} \geq \frac{y_2-y_1}{4}$.\smallskip

Let $\delta=\nu^{\frac{1}{4}}|k|^{-\frac{1}{4}}<<1$. In this case, $\Vert w \Vert_{L^2(y_1,y_2)}^2 \lesssim \delta \Vert w \Vert_{L^\infty}^2 \lesssim \mathcal{E}_1(w),$ therefore $\Vert w \Vert_{L^2}^2 \lesssim \mathcal{E}_1(w).$ Thus, we only need to estimate each term in $\mathcal{E}_1(w).$

Thanks to Lemma \ref{lem:varphi-Linfty}, we have
\begin{equation}\nonumber
\begin{aligned}
\frac{\Vert \varphi_2\Vert_{L^\infty}^2}{(y_2-y_1+\delta)^2\delta} \lesssim &\frac{\Vert F\Vert_{L^2}^2}{|k|^2\delta^2 (y_2-y_1+\delta)^2}+\frac{\nu \Vert F\Vert_{L^2}\Vert w \Vert_{L^2}}{|k|^2\delta^4(y_2-y_1+\delta)^2}+\delta \Vert w \Vert_{L^\infty}^2\\
\lesssim &\frac{\Vert F \Vert_{L^2}^2}{|k|^2\delta^4}+\frac{\nu \Vert F\Vert_{L^2}\Vert w \Vert_{L^2}}{|k|^2\delta^6}+\delta \Vert w \Vert_{L^\infty}^2,
\end{aligned}
\end{equation}
which together with Lemma \ref{lem:dw-Linfty} yields that
\begin{equation}\nonumber
\begin{aligned}
\delta^3\Vert w'\Vert_{L^\infty(B(y_1,\delta)\cup B(y_2,\delta))}^2\lesssim&\frac{\delta^8|k|^2}{\nu^2}\left(\frac{\Vert F \Vert_{L^2}^2}{|k|^2\delta^4}+\frac{\nu \Vert F\Vert_{L^2}\Vert w \Vert_{L^2}}{|k|^2\delta^6}+\delta \Vert w \Vert_{L^\infty}^2\right)\\
&+\frac{\delta^4}{\nu^2}\Vert F \Vert_{L^2}^2+\delta \Vert w \Vert_{L^\infty}^2\\
\lesssim & \frac{\Vert F\Vert_{L^2}^2}{\nu |k|}+\frac{\Vert F\Vert_{L^2}\Vert w \Vert_{L^2}}{\nu^{\frac{1}{2}}|k|^{\frac{1}{2}}}+\delta\Vert w \Vert_{L^\infty}^2,
\end{aligned}
\end{equation}
which gives
\begin{equation}\nonumber
\begin{aligned}
&\frac{\nu}{|k|(y_2-y_1+\delta)\delta}\Vert w'\overline{w}\Vert_{L^\infty(B(y_1,\delta)\cup B(y_2,\delta))}\\
&\lesssim\frac{\nu}{|k|\delta^4}\delta^{\frac{3}{2}}\Vert w'\Vert_{L^\infty(B(y_1,\delta)\cup B(y_2,\delta))}\delta^{\frac{1}{2}}\Vert w \Vert_{L^\infty}\\
&\lesssim\frac{\nu}{|k|\delta^4}\left(\delta^3\Vert w'\Vert_{L^\infty(B(y_1,\delta)\cup B(y_2,\delta))}^2+\delta\Vert w \Vert_{L^\infty}^2\right)\\
&\lesssim \frac{\Vert F\Vert_{L^2}^2}{\nu |k|}+\frac{\Vert F\Vert_{L^2}\Vert w \Vert_{L^2}}{\nu^{\frac{1}{2}}|k|^{\frac{1}{2}}}+\delta \Vert w \Vert_{L^\infty}^2.
\end{aligned}
\end{equation}

Using the fact that $\nu\|w'\|_{L^2}^2\le \|F\|_{L^2}\|w\|_{L^2}$, we get
\begin{equation}\nonumber
\begin{aligned}
\frac{\nu}{|k|\delta^{\frac{5}{2}}}\Vert w'\Vert_{L^2}\Vert w \Vert_{L^\infty} &\leq \frac{\nu^2}{|k|^2 \delta^6}\Vert w'\Vert_{L^2}+\delta \Vert w \Vert_{L^\infty}^2\\
&\lesssim\frac{\nu\Vert F\Vert_{L^2}\Vert w\Vert_{L^2}}{|k|^2\delta^6}+\delta  \Vert w \Vert_{L^\infty}^2,\\
\end{aligned}
\end{equation}
and
\begin{equation}\nonumber
\begin{aligned}
&\delta  \Vert w \Vert_{L^\infty}^2 \lesssim \delta \Vert w \Vert_{L^2} \Vert w' \Vert_{L^2}\lesssim \frac{\delta}{\nu^{\frac{1}{2}}}\Vert F \Vert_{L^2}^{\frac{1}{2}}\Vert w \Vert_{L^2}^{\frac{3}{2}}.
\end{aligned}
\end{equation}

Summing up and due to $\delta=\nu^{\frac{1}{4}}|k|^{-\frac{1}{4}}$, we conclude that
\begin{equation}\nonumber
\begin{aligned}
\Vert w \Vert_{L^2}^2\lesssim \nu^{-1}|k|^{-1}\Vert F \Vert_{L^2}^2.
\end{aligned}
\end{equation}

\noindent\textbf{Case 2. $\nu^{\frac{1}{4}}|k|^{-\frac{1}{4}} \leq \frac{y_2-y_1}{4}.$}
Take $\delta^3(y_2-y_1)|k|=\nu$, then $0<\delta \leq \frac{y_2-y_1}{4}$.
In this case, we have $\mathcal{E}_1(w)\le \mathcal{E}_2(w)$. We deduce from Lemma \ref{lem:outside} and Lemma \ref{lem:inside} that $\Vert w \Vert_{L^2}^2 \lesssim \mathcal{E}_2(w).$ Now we estimate each term in $\mathcal{E}_2(w)$.
First of all, we have
\beno
&&\frac{\nu^2}{|k|^2(y_2-y_1)^3\delta^4}\Vert w \Vert_{L^\infty}^2\leq \frac{\delta^6 (y_2-y_1)^2}{(y_2-y_1)^3\delta^4}\Vert w \Vert_{L^\infty}^2 \lesssim \delta\Vert w \Vert_{L^\infty}^2,\\
&&\frac{\Vert F \Vert_{L^2}\Vert w \Vert_{L^2}}{|k|\delta(y_2-y_1)}\lesssim \frac{\Vert F \Vert_{L^2}\Vert w \Vert_{L^2}}{|k|\delta^\frac 32(y_2-y_1)^\frac 12}\lesssim \frac{\Vert F \Vert_{L^2}\Vert w \Vert_{L^2}}{\nu^{\frac{1}{2}}|k|^{\frac{1}{2}}},\\
&&\frac{\Vert F \Vert_{L^2}^2}{|k|^2(y_2-y_1)^3\delta}\lesssim \frac{\Vert F \Vert_{L^2}^2}{\nu|k|},\\
&&\frac{\nu^2}{|k|^2}\frac{\Vert w'\Vert_{L^2}^2}{(y_2-y_1)^3\delta^3}\lesssim \frac{\Vert F\Vert_{L^2}\Vert w \Vert_{L^2}}{|k|(y_2-y_1)^2}\lesssim \frac{\Vert F\Vert_{L^2}\Vert w \Vert_{L^2}}{\nu^{\frac{1}{2}}|k|^{\frac{1}{2}}}.
\eeno
Thanks to $\delta\le |k|^{-\frac 14}|\nu|^{\frac 14}$, we get
$$\delta  \Vert w \Vert_{L^\infty}^2 \lesssim \delta \Vert w \Vert_{L^2} \Vert w' \Vert_{L^2}\lesssim \frac{\delta}{\nu^{\frac{1}{2}}}\Vert F \Vert_{L^2}^{\frac{1}{2}}\Vert w \Vert_{L^2}^{\frac{3}{2}}\lesssim \frac{1}{\nu^{\frac{1}{4}}|k|^{\frac{1}{4}}}\Vert F \Vert_{L^2}^{\frac{1}{2}}\Vert w \Vert_{L^2}^{\frac{3}{2}},$$
which gives
\begin{equation}\nonumber
\begin{aligned}
\frac{\nu}{|k|\delta^{\frac{3}{2}}(y_2-y_1)}\Vert w'\Vert_{L^2}\Vert w \Vert_{L^\infty}=&\delta\Vert w'\Vert_{L^2}\delta^{\frac{1}{2}}\Vert w \Vert_{L^\infty} \lesssim\delta^2 \Vert w'\Vert_{L^2}^2+\delta \Vert w \Vert_{L^\infty}\\
\lesssim&\frac{\delta^2}{\nu}\Vert F\Vert_{L^2}\Vert w \Vert_{L^2}+\delta \Vert w \Vert_{L^\infty}^2\\
\lesssim &\frac{\Vert F \Vert_{L^2}\Vert w \Vert_{L^2}}{\nu^{\frac{1}{2}}|k|^{\frac{1}{2}}}+\frac{1}{\nu^{\frac{1}{4}}|k|^{\frac{1}{4}}}\Vert F \Vert_{L^2}^{\frac{1}{2}}\Vert w \Vert_{L^2}^{\frac{3}{2}}.
\end{aligned}
\end{equation}

Thanks to Lemma \ref{lem:varphi-Linfty}, we have
\begin{equation}\nonumber
\begin{aligned}
\frac{\Vert \varphi_2\Vert_{L^\infty}^2}{(y_2-y_1)^2\delta}\lesssim&\frac{\Vert F\Vert_{L^2}^2}{|k|^2\delta^2(y_2-y_1)^2}+\frac{\nu\Vert F\Vert_{L^2}\Vert w \Vert_{L^2}}{|k|^2\delta^4(y_2-y_1)^2}+\delta \Vert w \Vert_{L^\infty}^2\\
\lesssim &\frac{\Vert F\Vert_{L^2}^2}{\nu|k|}+\frac{\Vert F\Vert_{L^2}\Vert w \Vert_{L^2}}{\nu^{\frac{1}{2}}|k|^{\frac{1}{2}}}+\frac{1}{\nu^{\frac{1}{4}}|k|^{\frac{1}{4}}}\Vert F \Vert_{L^2}^{\frac{1}{2}}\Vert w \Vert_{L^2}^{\frac{3}{2}}.
\end{aligned}
\end{equation}
Here we used $\frac {\nu} {|k|^2\delta^4(y_2-y_2)^2}\le \frac {1} {|k|\delta (y_2-y_2)}\le \frac {1} {|k|^\frac 12 \nu^\frac 12}$. Then by Lemma \ref{lem:dw-Linfty}, we get
\begin{equation}\nonumber
\begin{aligned}
\delta^3\Vert w'\Vert_{L^\infty(B(y_1,\delta)\cup B(y_2,\delta))}^2\lesssim &\frac{\delta^6(y_2-y_1)^2|k|^2}{\nu^2}\mathcal{F}_1(w)+\frac{\delta^4}{\nu^2}\Vert F  \Vert_{L^2}^2+\delta\|w\|^2_{L^\infty}\\
\lesssim &\frac{\Vert F\Vert_{L^2}^2}{\nu|k|}+\frac{\Vert F\Vert_{L^2}\Vert w \Vert_{L^2}}{\nu^{\frac{1}{2}}|k|^{\frac{1}{2}}}+\frac{1}{\nu^{\frac{1}{4}}|k|^{\frac{1}{4}}}\Vert F \Vert_{L^2}^{\frac{1}{2}}\Vert w \Vert_{L^2}^{\frac{3}{2}},
\end{aligned}
\end{equation}
which gives
\begin{equation}\nonumber
\begin{aligned}
&\frac{\nu}{|k|\delta (y_2-y_1)}\Vert w'\overline{w}\Vert_{L^\infty(B(y_1,\delta)\cup B(y_2,\delta))}\\
&\lesssim\delta^{\frac{3}{2}}\Vert w'\Vert_{L^\infty(B(y_1,\delta)\cup B(y_2,\delta))}\delta^{\frac{1}{2}}\Vert w\Vert_{L^\infty}\\
&\lesssim\delta^3\Vert w'\Vert_{L^\infty(B(y_1,\delta)\cup B(y_2,\delta))}^2+\delta \Vert w \Vert_{L^\infty}^2\\
&\lesssim\frac{\Vert F\Vert_{L^2}^2}{\nu|k|}+\frac{\Vert F\Vert_{L^2}\Vert w \Vert_{L^2}}{\nu^{\frac{1}{2}}|k|^{\frac{1}{2}}}+\frac{1}{\nu^{\frac{1}{4}}|k|^{\frac{1}{4}}}\Vert F \Vert_{L^2}^{\frac{1}{2}}\Vert w \Vert_{L^2}^{\frac{3}{2}},
\end{aligned}
\end{equation}
and
\begin{equation}\nonumber
\begin{aligned}
\frac{\nu^2}{|k|^2}\frac{\Vert w'\Vert_{L^\infty(B(y_1,\delta)\cup B(y_2,\delta))}^2}{(y_2-y_1)^3\delta^2}\lesssim&\delta^3\Vert w'\Vert_{L^\infty(B(y_1,\delta)\cup B(y_2,\delta))}^2\\
\lesssim &\frac{\Vert F\Vert_{L^2}^2}{\nu|k|}+\frac{\Vert F\Vert_{L^2}\Vert w \Vert_{L^2}}{\nu^{\frac{1}{2}}|k|^{\frac{1}{2}}}+\frac{1}{\nu^{\frac{1}{4}}|k|^{\frac{1}{4}}}\Vert F \Vert_{L^2}^{\frac{1}{2}}\Vert w \Vert_{L^2}^{\frac{3}{2}}.
\end{aligned}
\end{equation}

Summing up, we conclude that
\begin{equation}\nonumber
\begin{aligned}
\Vert w \Vert_{L^2}^2\lesssim \nu^{-1}|k|^{-1}\Vert F \Vert_{L^2}^2.
\end{aligned}
\end{equation}

This completes the proof of Proposition \ref{prop:res} in the case of $
\lambda\in [0,1]$.
\end{proof}

\subsection{Case of $\lambda\le 0$}
First of all, we get by integration by parts that
\begin{equation}\nonumber
\begin{aligned}
&\Big|\mathrm{Im} \big\langle -\nu(\partial_y^2-|k|^2)w+ik[(1-y^2-\lambda)w+2\varphi], w\big\rangle\Big|\\
&\geq |k|\left(\int_{-1}^{1}(1-y^2-\lambda)|w|^2\mathrm{d}y+2 \int_{-1}^{1}\varphi\overline{w}\mathrm{d}y\right),
\end{aligned}
\end{equation}
which gives
\begin{equation}\nonumber
\begin{aligned}
\int_{-1}^1(1-y^2-\lambda)|w|^2\mathrm{d}y+\big\langle 2\varphi,w\big\rangle \leq |k|^{-1}\Vert F\Vert_{L^2}\Vert w \Vert_{L^2}.
\end{aligned}
\end{equation}
This along with \eqref{eq:energy-key} shows that
\begin{equation}\label{eq:lam-n1}
\begin{aligned}
2\int_{-1}^1(1-y^2-\lambda)^2&\bigg|\left(\frac{\varphi}{1-y^2-\lambda}\right)'\bigg|^2\mathrm{d}y\\
&+\int_{-1}^1|k|^2|\varphi|^2\mathrm{d}y
\leq |k|^{-1}\Vert F\Vert_{L^2}\Vert w \Vert_{L^2}.
\end{aligned}
\end{equation}

On the other hand, we have
\begin{equation}\nonumber
\begin{aligned}
&\bigg|\mathrm{Im} \bigg\langle -\nu(\partial_y^2-|k|^2)w\\
&\qquad+ik[(1-y^2-\lambda)w+2\varphi], \frac{w}{1-y^2-\lambda} \chi_{(-1+\delta,1-\delta)}\bigg\rangle\bigg|\\
&\geq -2\nu\Vert w'\overline{w}\Vert_{L^\infty\big((-1,-1+\delta)\cup(1-\delta,1)\big)}\frac{1}{|1-(1-\delta)^2-\lambda|}\\
&\qquad-\nu\Vert w \Vert_{L^\infty}\Vert w' \Vert_{L^2}\left\Vert \frac{2y}{(1-y^2-\lambda)^2}\right\Vert_{L^2(-1+\delta,1-\delta)}\\
&\qquad+\frac{|k|}{2}\int_{-1+\delta}^{1-\delta}|w|^2\mathrm{d}y-|k|\int_{-1+\delta}^{1-\delta}\frac{2|\varphi|^2}{(1-y^2-\lambda)^2}\mathrm{d}y,
\end{aligned}
\end{equation}
which gives
\begin{align*}
&\frac{|k|}{2}\int_{-1+\delta}^{1-\delta}|w|^2\mathrm{d}y \\
&\leq \Vert F\Vert_{L^2}\left\Vert \frac{w}{(1-y^2-\lambda)}\right\Vert_{L^2(-1+\delta,1-\delta)}+\frac{2\nu \Vert w'\overline{w}\Vert_{L^\infty((-1,-1+\delta)\cup(1-\delta,1))}}{|1-(1-\delta)^2-\lambda|}\\
&\qquad+\nu\Vert w \Vert_{L^\infty}\Vert w' \Vert_{L^2}\left\Vert \frac{2y}{(1-y^2-\lambda)^2}\right\Vert_{L^2(-1+\delta,1-\delta)}\\
&\qquad+|k|\int_{-1+\delta}^{1-\delta}\frac{2|\varphi|^2}{(1-y^2-\lambda)^2}\mathrm{d}y.
\end{align*}
Thus, we obtain
\begin{equation}\nonumber
\begin{aligned}
\Vert w\Vert_{L^2}^2 \lesssim &\Vert w\Vert_{L^2(-1+\delta,1-\delta)}^2+\delta\Vert w \Vert_{L^\infty}^2\\
\lesssim& \frac{\Vert F\Vert_{L^2}\Vert w \Vert_{L^2}}{|k|\delta}+\frac{\nu \Vert w\Vert_{L^\infty}\Vert w'\Vert_{L^\infty\left(A\right)}}{|k|(1-(1-\delta)^2-\lambda)}\\
&+\frac{\nu}{\delta^{\frac{3}{2}}|k|}\Vert w'\Vert_{L^2}\Vert w \Vert_{L^\infty}+\delta \Vert w \Vert_{L^\infty}^2+\int_{-1+\delta}^{1-\delta}\frac{2|\varphi|^2}{(1-y^2-\lambda)^2}\mathrm{d}y,
\end{aligned}
\end{equation}
where $A=(-1,-1+\delta)\cup (1-\delta,1)$.

We get by \eqref{eq:lam-n1} that
\begin{equation}\label{4}
\begin{aligned}
\int_{-1+\delta}^{1-\delta}\frac{|\varphi|^2}{(1-y^2-\lambda)^2}\mathrm{d}y\lesssim \delta^{-2}\Vert \varphi\Vert_{L^2}^2\lesssim \frac{1}{|k|\delta^2}\Vert F\Vert_{L^2}\Vert w \Vert_{L^2}.
\end{aligned}
\end{equation}
From the proof of Lemma \ref{lem:dw-Linfty} and Lemma \ref{lem:varphi-Linfty}, we infer that
\begin{equation}\nonumber
\begin{aligned}
&\frac{\Vert w'\Vert_{L^\infty\left(A\right)}}{|1-(1-\delta)^2-\lambda|}\\
&\lesssim \frac{|k|}{\nu}\delta\Vert w \Vert_{L^\infty}+\frac{|k|}{|1-(1-\delta)^2-\lambda|\nu}\delta\Vert \varphi\Vert_{L^\infty(A)}\\
&\qquad+\frac{1}{\delta^{\frac{1}{2}}\nu}\Vert F   \Vert_{L^2}+\delta^{-2}\|w\|_{L^\infty}\\
&\lesssim \frac{|k|}{\nu}\delta\Vert w \Vert_{L^\infty}+\nu^{-1}\big(\delta^{-\frac 12}\|F\|_{L^2}+\nu\delta^{-\frac 32}\|w'\|_{L^2}+\delta\|w\|_{L^\infty}+|k|\delta^2\|w\|_{L^\infty}\big)\\
&\qquad+\frac{1}{\delta^{\frac{1}{2}}\nu}\Vert F   \Vert_{L^2}+\delta^{-2}\|w\|_{L^\infty}
\end{aligned}
\end{equation}

Summing up, we conclude that
\begin{equation}\nonumber
\begin{aligned}
\Vert w\Vert_{L^2}^2
\lesssim& \frac{\Vert F\Vert_{L^2}\Vert w \Vert_{L^2}}{|k|\delta^2}+\|w\|_{L^\infty}|k|^{-1}\Big(|k|\delta\|w\|_{L^\infty}+\delta^{-\frac 12}\|F\|_{L^2}\\
&+\nu\delta^{-\frac 32}\|w'\|_{L^2}+\delta\|w\|_{L^\infty}+|k|\delta^2\|w\|_{L^\infty}+\nu\delta^{-2}\|w\|_{L^\infty}\Big)\\
&+\frac{\nu}{\delta^{\frac{3}{2}}|k|}\Vert w'\Vert_{L^2}\Vert w \Vert_{L^\infty}+\delta \Vert w \Vert_{L^\infty}^2\\
\lesssim& \frac{\Vert F\Vert_{L^2}\Vert w \Vert_{L^2}}{|k|\delta^2}+\delta \Vert w \Vert_{L^\infty}^2
+|k|^{-1}\delta^{-\frac 12}\|F\|_{L^2}\|w\|_{L^\infty}\\
&+\nu\delta^{-\frac 32}|k|^{-1}\|w'\|_{L^2}\|w\|_{L^\infty}+\nu \delta^{-2}|k|^{-1}\|w\|_{L^\infty}^2.
\end{aligned}
\end{equation}
Taking $\delta=\nu^{\frac 14}|k|^{-\frac 14}$, we obtain
\begin{equation}\nonumber
\begin{aligned}
\Vert w\Vert_{L^2}^2
&\lesssim \frac{\Vert F\Vert_{L^2}\Vert w \Vert_{L^2}}{|k|^\frac 12\nu^\frac 12}+\delta \Vert w \Vert_{L^\infty}^2
+\delta^{-2}|k|^{-2}\|F\|_{L^2}^2+\nu^2\delta^{-4}|k|^{-2}\|w'\|_{L^2}^2\\
&\lesssim \frac{\Vert F\Vert_{L^2}\Vert w \Vert_{L^2}}{|k|^\frac 12\nu^\frac 12}+\nu^{-\frac 14}|k|^{-\frac 14}\Vert w \Vert_{L^2}^\frac 32\|F\|_{L^2}^\frac 12
+\nu^{-\frac12}|k|^{-\frac 32}\|F\|_{L^2}^2,
\end{aligned}
\end{equation}
which implies that
\beno
\|w\|_{L^2}\lesssim (\nu|k|)^{-\frac 12}\|F\|_{L^2}.
\eeno

The proof is completed.

\section{Proof of the main Theorems: enhanced dissipation and nonlinear stability}\label{enhanceddecay}
In this section, we will prove our main results.
\subsection{The Enhanced dissipation}
An operator $H$ in a Hilbert space $X$ is accretive if $\mathrm{Re}\langle Hf,f\rangle \geq0$ for any $f\in X$ and an accretive operator is called m-accretive if any $\lambda <0$ belongs to the resolvent set of $H$.  We define
$$\Psi(A)=\inf\big\{\Vert (A-i\lambda)u\Vert:u\in D(A),\lambda \in \mathbb{R},\Vert u\Vert=1\big\}.$$

The following Gearhart-Pr$\ddot{\mathrm{u}}$ss type lemma comes from \cite{Wei}.
\begin{lemma}\label{lem:GP}
Let $A$ be an m-accretive operator on a Hilbert spaces $X$. Then  for any $t>0$,
\beno
\Vert e^{-tA}\Vert \leq e^{-t\Psi(A)+\frac{\pi}{2}}.
\eeno
\end{lemma}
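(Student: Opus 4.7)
The plan is to follow the approach of Wei, which extracts exponential decay of the semigroup from the uniform resolvent control on the imaginary axis encoded by $\Psi(A)$.

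First I would dispose of the trivial case $\Psi(A)=0$ and assume $\Psi:=\Psi(A)>0$. From the definition of $\Psi$, $A-i\lambda$ is injective with closed range and its inverse on the range is bounded by $1/\Psi$ for every $\lambda\in\mathbb R$. To upgrade closed range to surjectivity I would use m-accretivity: the resolvent $(A+\mu+i\lambda)^{-1}$ exists on all of $X$ for every $\mu>0$, and a limiting argument as $\mu\to 0^+$, combined with the uniform $\Psi$-bound, shows that the range of $A-i\lambda$ is dense and hence equal to $X$. This yields the uniform resolvent estimate $\|(A-i\lambda)^{-1}\|_{X\to X}\le 1/\Psi$ on the entire imaginary axis.

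Next, for $f\in D(A)$ I would set $u(t)=e^{-tA}f$ and use the Laplace-transform identity $(A+\mu+i\lambda)^{-1}f=\int_0^\infty e^{-(\mu+i\lambda)t}u(t)\,dt$, valid for $\mu>0$ by m-accretivity. Plancherel's theorem then gives
\begin{equation*}
\int_0^\infty e^{-2\mu t}\|u(t)\|_X^2\,dt=\frac{1}{2\pi}\int_{\mathbb R}\|(A+\mu+i\lambda)^{-1}f\|_X^2\,d\lambda.
\end{equation*}
Combined with accretivity, which forces $t\mapsto\|u(t)\|_X$ to be non-increasing, and with the uniform resolvent bound from the first step (together with the $1/\lambda$ decay of $(A+i\lambda)^{-1}f$ obtained from writing $(A+i\lambda)^{-1}f=(i\lambda)^{-1}f-(i\lambda)^{-1}(A+i\lambda)^{-1}Af$ for $f\in D(A)$), I would extract a polynomial-in-$t$ decay rate for $\|u(t)\|_X$.

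The main obstacle is upgrading this polynomial decay to the sharp exponential rate $\|e^{-tA}\|\le e^{-t\Psi+\pi/2}$. This is handled by exploiting the semigroup property $e^{-tA}=(e^{-(t/n)A})^n$ together with the observation that $A-i\lambda$ inherits both m-accretivity and the same pseudospectral bound $\Psi$ for every $\lambda\in\mathbb R$, so any bound proved for $e^{-\tau A}$ is equally a bound for $e^{-\tau(A-i\lambda)}$. Iterating the polynomial-decay estimate over short time intervals and optimizing over the number of iterations yields exponential decay, and the particular constant $\pi/2$ emerges from the sharp optimization (as opposed to a generic Gearhart--Pr\"uss argument which would only give unspecified constants). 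The delicate arithmetic in this final optimization is the heart of Wei's argument and is where I would expect to devote most attention.
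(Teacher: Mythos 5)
The paper does not actually prove this lemma: it is quoted verbatim from the cited reference [Wei] (``Diffusion and mixing in fluid flow via the resolvent estimate''), so there is no in-paper proof to match. Measured against Wei's actual argument, your first steps are sound: reducing to $\Psi(A)>0$ and upgrading the pseudospectral lower bound to the uniform resolvent estimate $\Vert (A-i\lambda)^{-1}\Vert\le 1/\Psi(A)$ on all of $i\mathbb{R}$, using that $A-i\lambda$ is again m-accretive, is standard and correct.

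The genuine gap is in the second half. The Laplace-transform/Plancherel route is the classical Gearhart--Pr\"uss argument, and it has two problems here. First, the polynomial decay you extract controls $\Vert e^{-tA}f\Vert$ by the graph norm of $f$ (the $1/\lambda$ tail of $(A+i\lambda)^{-1}f$ costs you $\Vert Af\Vert$), and since $\Vert Ae^{-tA}f\Vert$ is only bounded by $\Vert Af\Vert$ for a general m-accretive $A$ (no analyticity is assumed), the naive iteration over time intervals does not close without further ideas. Second, and more importantly, even a successful Gearhart--Pr\"uss iteration yields $\Vert e^{-tA}\Vert\le Ce^{-\delta t}$ with non-explicit $C$ and with $\delta$ strictly smaller than $\Psi(A)$ in general; the content of the lemma is precisely the sharp rate $\Psi(A)$ with the explicit prefactor $e^{\pi/2}$, and your assertion that these ``emerge from the sharp optimization'' of the iteration is not substantiated and is not how the constant arises. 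Wei's proof is a direct time-domain energy argument with no Fourier analysis: for $x(t)=e^{-tA}x_0$ one has $x'(t)+i\lambda x(t)=-(A-i\lambda)x(t)$, hence $\Vert x'(t)+i\lambda x(t)\Vert\ge\Psi\Vert x(t)\Vert$ for \emph{every} $\lambda$; choosing the optimal $\lambda$ for the trajectory on a given interval, combining with the monotonicity of $\Vert x(t)\Vert$ coming from accretivity, and comparing with the harmonic oscillator produces the bound, with $\pi/2=\Psi\cdot\frac{\pi}{2\Psi}$ appearing as a quarter-period of $\cos(\Psi t)$. To repair your write-up you would either need to reproduce that variational argument or simply cite [Wei], as the paper does.
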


Recall that
\beno
\widehat{\mathscr{L}}=-\nu(\partial_y^2-|k|^2)+ik\big[(1-y^2)+2(\partial_y^2-|k|^2)^{-1}\big],
\eeno
It is easy to see that $\widehat{\mathscr{L}}$ is an m-accretive operator. Moreover, it follows from Proposition \ref{prop:res} that
\begin{equation}\nonumber
\begin{aligned}
\Psi(\widehat{\mathscr{L}}) \geq c\nu^{\frac{1}{2}}|k|^{\frac{1}{2}}+\nu |k|^2.
\end{aligned}
\end{equation}
Notice that
\begin{equation}\nonumber
\begin{aligned}
\Vert e^{-t\mathscr{L}} g_{\not=}\Vert_{L^2}^2&\sim\sum_{k\not=0}\Vert e^{-\widehat{\mathscr{L}}t}\widehat{g}(t;k,y)\Vert_{L^2}^2.
\end{aligned}
\end{equation}
Then Lemma \ref{lem:GP} gives the following enhanced dissipation estimate.

\begin{proposition}\label{prop:ED}
There exist constants $C,c>0$, independent of $\nu$ such that
\begin{equation}\nonumber
\begin{aligned}
\Vert e^{-t\mathscr{L}} g_{\not=}\Vert_{L^2} \leq Ce^{-c\nu^{\frac{1}{2}}t-\nu t}\Vert g_{\not=}\Vert_{L^2}.
\end{aligned}
\end{equation}
\end{proposition}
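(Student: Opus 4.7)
The plan is to reduce the claim, via Plancherel in $x$, to a uniform-in-$k$ semigroup estimate on each nonzero Fourier mode, and then deduce such an estimate by combining the resolvent bound of Proposition \ref{prop:res} with the m-accretivity of $\widehat{\mathscr{L}}$ through the Gearhart--Pr\"uss type Lemma \ref{lem:GP}. Since $\Vert e^{-t\mathscr{L}} g_{\not=}\Vert_{L^2}^2 \sim \sum_{k\ne 0}\Vert e^{-t\widehat{\mathscr{L}}}\widehat{g}(k,\cdot)\Vert_{L^2(I)}^2$, it suffices to control each mode by a common exponential factor and sum.

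First I would establish the pseudospectral lower bound
$$\Psi(\widehat{\mathscr{L}}) \;\geq\; \tfrac{1}{2}\bigl(c\sqrt{\nu}\,|k|^{1/2} + \nu|k|^2\bigr).$$
The dissipative half is immediate from m-accretivity: the identity $\mathrm{Re}\langle \widehat{\mathscr{L}}f,f\rangle = \nu|k|^2\Vert f\Vert_{L^2}^2 + \nu\Vert f'\Vert_{L^2}^2$ noted in Section \ref{sec2} yields $\Vert(\widehat{\mathscr{L}} - i\lambda)f\Vert_{L^2} \geq \nu|k|^2\Vert f\Vert_{L^2}$ for all $\lambda\in\R$. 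For the enhanced-dissipation half, set $F = (\widehat{\mathscr{L}} - i\lambda)w$ and let $\varphi$ solve $(\partial_y^2 - |k|^2)\varphi = w$ with $\varphi(\pm 1)=0$; then
$$-\nu(\partial_y^2 - |k|^2)w + ik\bigl[(1-y^2-\lambda/k)w + 2\varphi\bigr] = F,$$
which is precisely the Orr--Sommerfeld system \eqref{eq:res} with real parameter $\lambda/k$. Proposition \ref{prop:res} therefore delivers $\nu^{1/2}|k|^{1/2}\Vert w\Vert_{L^2} \leq C\Vert F\Vert_{L^2}$; taking the infimum over unit-norm $w$ and real $\lambda$ yields $\Psi(\widehat{\mathscr{L}}) \geq c\sqrt{\nu}|k|^{1/2}$. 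Averaging the two lower bounds gives the displayed inequality.

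Next I would apply Lemma \ref{lem:GP} to the m-accretive operator $\widehat{\mathscr{L}}$, obtaining
$$\Vert e^{-t\widehat{\mathscr{L}}}\Vert_{L^2\to L^2} \;\leq\; \exp\!\bigl(-t\Psi(\widehat{\mathscr{L}}) + \tfrac{\pi}{2}\bigr) \;\leq\; C\exp\!\bigl(-c\sqrt{\nu}\,|k|^{1/2} t - \tfrac{1}{2}\nu|k|^2 t\bigr).$$
Since $|k|\geq 1$, this is dominated by $C e^{-c\sqrt{\nu} t - c\nu t}$ (after relabeling $c$), so for each nonzero mode
$$\Vert e^{-t\widehat{\mathscr{L}}}\widehat{g}(k,\cdot)\Vert_{L^2(I)} \;\leq\; C e^{-c\sqrt{\nu} t - c\nu t}\,\Vert \widehat{g}(k,\cdot)\Vert_{L^2(I)}.$$
Squaring, summing over $k\neq 0$, and invoking Plancherel in $x$ produces the claimed bound on $\Vert e^{-t\mathscr{L}} g_{\not=}\Vert_{L^2}$.

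The only substantive input is the resolvent estimate of Proposition \ref{prop:res}; once that is in hand, the remaining steps amount to the identification of the Orr--Sommerfeld spectral parameter with $\lambda/k$ and a direct application of the standard Gearhart--Pr\"uss type bound. No genuine obstacle remains at this stage---the hard analytic work has already been carried out in Section \ref{secresolvent}, and what remains is essentially functional-analytic bookkeeping together with the observation $|k|\geq 1$ that converts the per-mode factor $e^{-c\sqrt{\nu}|k|^{1/2} t - c\nu|k|^2 t}$ into the uniform decay $e^{-c\sqrt{\nu} t - c\nu t}$.
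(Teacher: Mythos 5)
Your proposal is correct and follows essentially the same route as the paper: the pseudospectral lower bound $\Psi(\widehat{\mathscr{L}})\geq c\nu^{1/2}|k|^{1/2}+\nu|k|^2$ obtained from Proposition \ref{prop:res} together with m-accretivity, then Lemma \ref{lem:GP} applied mode by mode and Plancherel in $x$ (the paper itself omits these details, deferring to \cite{LWZ1}). Your identification of the Orr--Sommerfeld spectral parameter as $\lambda/k$ and the averaging of the two lower bounds are exactly the bookkeeping the paper leaves implicit, so nothing further is needed.
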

The proof of Proposition \ref{prop:ED} is similar to that in \cite{LWZ1} and here we omit it.

And the Theorem \ref{thm:ED} follows from Proposition \ref{prop:ED}.

\subsection{Nonlinear stability threshold}\label{nonlinearstability}
Now we consider the nonlinear problem
\begin{equation}\label{6.1}
\left \{
\begin{array}{lll}
(\partial_t -\nu\Delta+(1-y^2)\partial_x)\omega+2\partial_x \Delta^{-1}\omega=\nabla\cdot f,\\
\omega(t;x,\pm 1)=0,\\
\omega_{t=0}=\omega_0=:\omega(0),
\end{array}
\right.
\end{equation}
where $f=-u\omega$.

To derive the space-time estimates, we decompose the vorticity  $\omega=\omega^L+\omega^{NL}$ with
\begin{equation}\label{6.2}
\left \{
\begin{array}{lll}
(\partial_t+\mathscr{L})\omega^{NL}=\nabla\cdot f,\\
\omega^{NL}|_{t=0}=0,\\
\omega^{NL}(t;x,\pm 1)=0,
\end{array}
\right.
\end{equation}
and
\begin{equation}\label{6.3}
\left \{
\begin{array}{lll}
(\partial_t+\mathscr{L})\omega^{L}=0,\\
\omega^{L}|_{t=0}=\omega_0\triangleq \omega(0),\\
\omega^{L}(t;x,\pm 1)=0,
\end{array}
\right.
\end{equation}
where $\mathscr{L}=-\nu \Delta+(1-y^2)\partial_x+2\partial_x \Delta^{-1}$.
\begin{lemma}
There exists a constant $C>0$ independent of $t$ such that
\begin{equation}\label{6.4}
\Vert (\omega^{NL})_{\not=}(t)\Vert_{L^2}^2+\nu \int_0^t\Vert (\nabla \omega^{NL})_{\not=}(s)\Vert_{L^2}^2 \mathrm{d}s \leq C\nu^{-1}\int_0^t \Vert f_{\not=}(s)\Vert_{L^2}^2\mathrm{d}s,
\end{equation}
\begin{equation}\label{6.5}
\left\Vert \int_0^t e^{-(t-s)\mathscr{L}}(\nabla \cdot \ f_{\not=}(s))\mathrm{d}s\right\Vert_{L^2}^2\leq C\nu^{-1}\int_0^t \Vert f_{\not=}(s)\Vert_{L^2}^2\mathrm{d}s.
\end{equation}
\end{lemma}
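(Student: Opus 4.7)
The plan is to extract both (6.4) and (6.5) from one clean $L^2$ energy estimate on the projected system. First I would note that $\mathscr{L}$ commutes with the projection $g \mapsto g_{\not=}$: the multiplier $1-y^2$ acts in $y$ alone, while $\partial_x$ and $\Delta^{-1}$ (with zero Dirichlet data at $y=\pm 1$) preserve the Fourier decomposition in $x$. Applying this projection to (6.2) therefore yields
$$(\partial_t + \mathscr{L})\omega^{NL}_{\not=} = \nabla \cdot f_{\not=}, \qquad \omega^{NL}_{\not=}|_{t=0}=0, \qquad \omega^{NL}_{\not=}|_{y=\pm 1}=0.$$

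Next I would pair with $\omega^{NL}_{\not=}$ in $L^2$ and take the real part. The transport term $(1-y^2)\partial_x$ is skew-symmetric, and so is the nonlocal term $2\partial_x\Delta^{-1}$, since $\Delta$ with zero Dirichlet data is self-adjoint on $L^2(\mathbb{T}\times I)$, hence so is $\Delta^{-1}$, and it commutes with the skew-symmetric $\partial_x$. Only the diffusion contributes to the real part, giving
$$\mathrm{Re}\langle \mathscr{L}\omega^{NL}_{\not=}, \omega^{NL}_{\not=}\rangle = \nu\|\nabla \omega^{NL}_{\not=}\|_{L^2}^2.$$
Integration by parts on the right-hand side produces $\langle \nabla\cdot f_{\not=}, \omega^{NL}_{\not=}\rangle = -\langle f_{\not=}, \nabla \omega^{NL}_{\not=}\rangle$, with the boundary contribution vanishing thanks to $\omega^{NL}_{\not=}|_{y=\pm 1}=0$. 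Cauchy--Schwarz followed by Young's inequality then yields
$$\tfrac12 \tfrac{d}{dt}\|\omega^{NL}_{\not=}\|_{L^2}^2 + \tfrac{\nu}{2}\|\nabla \omega^{NL}_{\not=}\|_{L^2}^2 \leq \tfrac{1}{2\nu}\|f_{\not=}\|_{L^2}^2,$$
and integrating in time from $0$ to $t$ (using the vanishing initial condition) delivers (6.4) directly, with $C=1$.

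For (6.5), Duhamel's formula applied to the projected problem gives $\omega^{NL}_{\not=}(t) = \int_0^t e^{-(t-s)\mathscr{L}}(\nabla\cdot f_{\not=}(s))\,\mathrm{d}s$, so (6.5) is simply the bound on $\|\omega^{NL}_{\not=}(t)\|_{L^2}^2$ already contained in (6.4). This is a standard parabolic energy argument and there is no genuine obstacle; the only subtlety worth pausing over is the skew-symmetry of $2\partial_x\Delta^{-1}$ under the Dirichlet boundary data, which, as noted above, reduces to the self-adjointness of the Dirichlet Laplacian together with the commutativity $[\partial_x,\Delta^{-1}]=0$.
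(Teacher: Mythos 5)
Your proof is correct, and it is the standard argument: the paper itself omits the proof (deferring to Lemma 6.1 of \cite{LWZ1}), but the intended reasoning is exactly this energy estimate, using that $\mathrm{Re}\,\langle \widehat{\mathscr{L}}f,f\rangle=\nu|k|^2\Vert f\Vert_{L^2}^2+\nu\Vert f'\Vert_{L^2}^2$ (as recorded in Section~\ref{sec2}) so that only the diffusion survives, together with integration by parts on $\nabla\cdot f_{\not=}$ and Duhamel for \eqref{6.5}. No gaps.
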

\begin{proof}
The proof of this lemma is similar to Lemma 6.1 of \cite{LWZ1}, here we omit it.
\end{proof}
\begin{lemma}
Let $c$ be in Proposition \ref{prop:ED} and $c'\in(0,c)$, then it holds that
\begin{equation}\label{6.7}
\begin{array}{lll}
&\Vert e^{c'\nu^{\frac{1}{2}}t}\omega^{NL}_{\not=}\Vert_{L^\infty L^2}^2+\nu^{\frac{1}{2}}\Vert e^{c'\nu^{\frac{1}{2}}t}\omega^{NL}_{\not=}\Vert_{L^2L^2}^2\\
&\qquad+\nu \Vert e^{c'\nu^{\frac{1}{2}}t}(\nabla\omega^{NL})_{\not=}\Vert_{L^2L^2}^2
\lesssim \nu^{-1} \Vert e^{c'\nu^{\frac{1}{2}}t}f_{\not=}\Vert_{L^2L^2}^2.
\end{array}
\end{equation}
\end{lemma}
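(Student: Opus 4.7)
The plan is to substitute $W = e^{c'\sqrt{\nu}t}\omega^{NL}_{\not=}$ and $F = e^{c'\sqrt{\nu}t}f_{\not=}$, so that $W$ satisfies the shifted evolution
\begin{equation*}
(\partial_t + \widetilde{\mathscr{L}})W = \nabla \cdot F, \qquad W(0) = 0, \qquad W|_{y=\pm 1} = 0,
\end{equation*}
where $\widetilde{\mathscr{L}} := \mathscr{L} - c'\sqrt{\nu}$ acts on nonzero $x$-frequencies. The desired inequality then reduces to
\begin{equation*}
\|W\|_{L^\infty L^2}^2 + \sqrt{\nu}\|W\|_{L^2L^2}^2 + \nu\|\nabla W\|_{L^2L^2}^2 \lesssim \nu^{-1}\|F\|_{L^2L^2}^2.
\end{equation*}

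First I would carry out a standard energy estimate. Multiplying by $\overline{W}$ and taking the real part, the skew-symmetry of $(1-y^2)\partial_x$ and $2\partial_x\Delta^{-1}$ under periodic-in-$x$ and Dirichlet-in-$y$ boundary conditions gives $\mathrm{Re}\langle \widetilde{\mathscr{L}}W, W\rangle = \nu\|\nabla W\|_{L^2}^2 - c'\sqrt{\nu}\|W\|_{L^2}^2$. Integration by parts (valid since $W|_{y=\pm 1} = 0$) converts $\langle \nabla\cdot F, W\rangle$ into $-\langle F, \nabla W\rangle$, and after Cauchy--Schwarz and integration in time one obtains
\begin{equation*}
\|W\|_{L^\infty L^2}^2 + \nu\|\nabla W\|_{L^2L^2}^2 \leq 2c'\sqrt{\nu}\|W\|_{L^2L^2}^2 + \nu^{-1}\|F\|_{L^2L^2}^2.
\end{equation*}
Thus the first and third terms on the LHS of the target inequality are controlled as soon as we establish the enhanced-dissipation bound $\|W\|_{L^2L^2} \lesssim \nu^{-3/4}\|F\|_{L^2L^2}$, which is precisely the middle $\sqrt{\nu}\|W\|_{L^2L^2}^2$ piece.

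I would prove the latter by duality. Writing $\|W\|_{L^2L^2} = \sup_{\|\psi\|_{L^2L^2}=1} |\langle W, \psi\rangle|$ and using $W = (\partial_t + \widetilde{\mathscr{L}})^{-1}(\nabla\cdot F)$, we have $\langle W, \psi\rangle = -\langle F, \nabla \chi\rangle$, where $\chi$ solves the backward adjoint problem $(-\partial_t + \widetilde{\mathscr{L}}^{\,*})\chi = \psi$ with $\chi|_{y=\pm 1} = 0$ and vanishing terminal data. Proposition \ref{prop:res} translates to $\|(\widehat{\mathscr{L}}_k^{\,*} - i\tau)^{-1}\|_{L^2\to L^2} \lesssim \nu^{-1/2}|k|^{-1/2}$ uniformly in $\tau\in\mathbb{R}$, and for $c'$ small the bound persists after the small real shift since $|k|\geq 1$. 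Plancherel in time, applied after zero-extension and time-reversal, then yields $\|\chi\|_{L^2L^2} \lesssim \nu^{-1/2}\|\psi\|_{L^2L^2}$. A dissipative energy estimate on the $\chi$ equation gives
\begin{equation*}
\nu\|\nabla\chi\|_{L^2L^2}^2 \leq c'\sqrt{\nu}\|\chi\|_{L^2L^2}^2 + \|\psi\|_{L^2L^2}\|\chi\|_{L^2L^2} \lesssim \nu^{-1/2}\|\psi\|_{L^2L^2}^2,
\end{equation*}
so $\|\nabla\chi\|_{L^2L^2} \lesssim \nu^{-3/4}\|\psi\|_{L^2L^2}$, and hence $|\langle W, \psi\rangle| \lesssim \nu^{-3/4}\|F\|_{L^2L^2}\|\psi\|_{L^2L^2}$, as desired.

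Combining the two estimates (choosing $c'$ small enough to absorb $2c'\sqrt{\nu}\|W\|_{L^2L^2}^2$) closes the argument. The main obstacle is the duality step: a naive Plancherel-in-time application to $(\partial_t + \widetilde{\mathscr{L}})W = \nabla \cdot F$ loses a factor of $|k|$ on the source and is too lossy, so one must transfer the spatial derivative onto the adjoint test function and control $\|\nabla\chi\|_{L^2L^2}$ through the interplay of the sharp resolvent-based $L^2_tL^2_y$ bound on $\chi$ itself with the dissipative energy identity for $\chi$ — which is precisely where the sharpness of Proposition \ref{prop:res} (and the resulting decay rate $e^{-c\sqrt{\nu}t}$) is essential.
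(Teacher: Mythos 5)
Your argument is correct, but it is genuinely different from the route the paper takes: the paper omits the proof entirely and refers to Lemma 6.2 of \cite{LWZ1}, where the weighted space--time bound is obtained from the decaying semigroup bound of Proposition \ref{prop:ED} via Duhamel's formula on time intervals of length $O(\nu^{-1/2})$ together with the unweighted estimates \eqref{6.4}--\eqref{6.5} (i.e.\ a time-slicing/Young-convolution argument). You instead run a single space--time duality against the backward adjoint problem and feed the resolvent bound of Proposition \ref{prop:res} in directly through Plancherel in time. Both routes ultimately rest on the same pseudospectral input, and your bookkeeping is right: the energy identity with the shift $-c'\sqrt{\nu}$, the transfer of $\nabla$ onto $\chi$, the adjoint resolvent bound $\|(\widehat{\mathscr{L}}_k^{\,*}-i\tau)^{-1}\|\lesssim \nu^{-1/2}|k|^{-1/2}$ surviving the real shift because $|k|\ge 1$ and $c'<c$, and the exponent count $\sqrt{\nu}\cdot\nu^{-3/2}=\nu^{-1}$ all check out. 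One technical point deserves care in your Plancherel step: after time reversal the zero extension to $t<0$ is harmless since the data vanish there, but at the other endpoint you cannot simply truncate $\chi$ (that would create a jump and a Dirac mass in $\partial_t\chi$); you must let the solution evolve freely beyond the terminal time and invoke the semigroup decay $\|e^{-t\widetilde{\mathscr{L}}^{\,*}}\|\le Ce^{-(c-c')\sqrt{\nu}t}$ to guarantee square-integrability on $\mathbb{R}$. Alternatively, the intermediate bound $\|\chi\|_{L^2L^2}\lesssim \nu^{-1/2}\|\psi\|_{L^2L^2}$ follows without Plancherel from $\chi(t)=\int_t^Te^{-(s-t)\widetilde{\mathscr{L}}^{\,*}}\psi(s)\,\mathrm{d}s$, the decay of Proposition \ref{prop:ED} and Young's convolution inequality, which would make your proof rely only on statements already established in the paper. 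A last cosmetic remark: no smallness of $c'$ is needed to ``absorb'' the term $2c'\sqrt{\nu}\|W\|_{L^2L^2}^2$; once $\sqrt{\nu}\|W\|_{L^2L^2}^2\lesssim\nu^{-1}\|F\|_{L^2L^2}^2$ is known, it is simply substituted into the right-hand side.
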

\begin{proof}
This lemma can be obtained by applying the similar arguments as in the Lemma 6.2 of \cite{LWZ1}, here we omit the proof.
\end{proof}
Therefore, we conclude that
\begin{proposition}\label{pro:sp-time}
Let $c$ be in Proposition \ref{prop:ED} and $c'\in(0,c)$, then it holds that
\begin{equation}\label{6.8}
\begin{array}{lll}
&\Vert e^{c'\nu^{\frac{1}{2}}t}\omega_{\not=}\Vert_{L^\infty L^2}^2+\nu^{\frac{1}{2}}\Vert e^{c'\nu^{\frac{1}{2}}t}\omega_{\not=}\Vert_{L^2L^2}^2\\
&+\nu \Vert e^{c'\nu^{\frac{1}{2}}t}(\nabla\omega)_{\not=}\Vert_{L^2L^2}^2
\lesssim \|\omega_{\not=}(0)\|_{L^2}^2+\nu^{-1} \Vert e^{c'\nu^{\frac{1}{2}}t}f_{\not=}\Vert_{L^2L^2}^2.
\end{array}
\end{equation}
\end{proposition}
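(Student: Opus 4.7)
The plan is to use the decomposition $\omega_{\not=}=\omega^L_{\not=}+\omega^{NL}_{\not=}$ already introduced in \eqref{6.2}--\eqref{6.3}. Since the linear operator $\mathscr{L}$ preserves the $x$-mean, both $\omega^L_{\not=}$ and $\omega^{NL}_{\not=}$ satisfy the corresponding systems with only the nonzero modes. The contribution of $\omega^{NL}_{\not=}$ is already handled by \eqref{6.7}, which produces the forcing term $\nu^{-1}\Vert e^{c'\nu^{1/2}t}f_{\not=}\Vert_{L^2L^2}^2$ on the right-hand side. Therefore the only thing left to prove is the analogous estimate for $\omega^L_{\not=}$ with right-hand side $\Vert\omega_{\not=}(0)\Vert_{L^2}^2$, after which a triangle inequality combines the two parts.

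For $\omega^L_{\not=}$, I would first invoke Proposition \ref{prop:ED} to obtain the pointwise decay $\Vert\omega^L_{\not=}(t)\Vert_{L^2}\leq Ce^{-c\nu^{1/2}t}\Vert\omega_{\not=}(0)\Vert_{L^2}$. Since $c'<c$, multiplying by $e^{c'\nu^{1/2}t}$ gives immediately
\begin{equation*}
\Vert e^{c'\nu^{1/2}t}\omega^L_{\not=}\Vert_{L^\infty L^2}^2+\nu^{1/2}\Vert e^{c'\nu^{1/2}t}\omega^L_{\not=}\Vert_{L^2L^2}^2\lesssim \Vert\omega_{\not=}(0)\Vert_{L^2}^2,
\end{equation*}
where the $L^2L^2$ bound follows from $\nu^{1/2}\int_0^\infty e^{-2(c-c')\nu^{1/2}t}\,\mathrm{d}t=\frac{1}{c-c'}$.

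For the weighted gradient piece, I would use the m-accretivity identity $\mathrm{Re}\langle \widehat{\mathscr{L}}f,f\rangle=\nu|k|^2\Vert f\Vert_{L^2}^2+\nu\Vert f'\Vert_{L^2}^2$ recalled in Section \ref{sec2}: the advection $(1-y^2)\partial_x$ and nonlocal term $2\partial_x\Delta^{-1}$ are skew-adjoint on each Fourier mode (the latter by writing $\Delta^{-1}\omega=\Phi$ and integrating by parts), so testing the equation $(\partial_t+\mathscr{L})\omega^L_{\not=}=0$ against $\omega^L_{\not=}$ yields $\frac{1}{2}\frac{d}{dt}\Vert\omega^L_{\not=}\Vert_{L^2}^2+\nu\Vert\nabla\omega^L_{\not=}\Vert_{L^2}^2=0$. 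Multiplying by $e^{2c'\nu^{1/2}t}$ and integrating in time gives
\begin{equation*}
e^{2c'\nu^{1/2}t}\Vert\omega^L_{\not=}(t)\Vert_{L^2}^2+2\nu\int_0^t e^{2c'\nu^{1/2}s}\Vert\nabla\omega^L_{\not=}\Vert_{L^2}^2\,\mathrm{d}s=\Vert\omega_{\not=}(0)\Vert_{L^2}^2+2c'\nu^{1/2}\int_0^t e^{2c'\nu^{1/2}s}\Vert\omega^L_{\not=}\Vert_{L^2}^2\,\mathrm{d}s.
\end{equation*}
The last term is controlled by the previously established weighted $L^2L^2$ bound, which produces $\nu\Vert e^{c'\nu^{1/2}t}\nabla\omega^L_{\not=}\Vert_{L^2L^2}^2\lesssim\Vert\omega_{\not=}(0)\Vert_{L^2}^2$.

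Combining the three bounds on $\omega^L_{\not=}$ with the estimate \eqref{6.7} on $\omega^{NL}_{\not=}$ and adding, one obtains \eqref{6.8}. No genuine obstacle arises here, since all the analytic work was already carried out in Proposition \ref{prop:ED} (enhanced dissipation) and in the preceding lemma for $\omega^{NL}_{\not=}$; the mild technical point is the verification that the nonlocal term $2\partial_x\Delta^{-1}$ contributes zero to $\mathrm{Re}\langle\mathscr{L}\omega,\omega\rangle$, which is exactly the reason the unweighted energy identity for $\omega^L_{\not=}$ closes cleanly and the weight $e^{c'\nu^{1/2}t}$ can be absorbed thanks to the strict inequality $c'<c$.
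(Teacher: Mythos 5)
Your argument is correct and follows essentially the route the paper intends: the decomposition $\omega=\omega^L+\omega^{NL}$ is set up in \eqref{6.2}--\eqref{6.3} precisely for this purpose, the nonlinear part is disposed of by \eqref{6.7}, and the linear part follows from Proposition \ref{prop:ED} together with the weighted energy identity (the paper itself omits this step, citing the analogous proposition in \cite{LWZ1}). The only cosmetic slips are harmless constants (e.g.\ the missing factor $2$ in $\int_0^\infty e^{-2(c-c')\nu^{1/2}t}\,\mathrm{d}t$).
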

\begin{proof}
The proof of this proposition is similar to Proposition 6.3 of \cite{LWZ1} and here we omit it.
\end{proof}

Now we are on a position to prove Theorem \ref{mainresult1}.
\begin{proof}[Proof of Theorem \ref{mainresult1}]
First, we have the following basic estimate
$$\|\omega\|_{L^\infty L^2}^2+\nu\|\nabla \omega\|_{L^2L^2}^2\leq \|\omega_0\|_{L^2}^2.$$
It follows from Proposition \ref{pro:sp-time} that
\begin{equation}\nonumber
\begin{array}{lll}
&\Vert e^{c'\nu^{\frac{1}{2}}t}\omega_{\not=}\Vert_{L^\infty L^2}^2+\nu^{\frac{1}{2}}\Vert e^{c'\nu^{\frac{1}{2}}t}\omega_{\not=}\Vert_{L^2L^2}^2\\
&+\nu \Vert e^{c'\nu^{\frac{1}{2}}t}(\nabla\omega)_{\not=}\Vert_{L^2L^2}^2
\lesssim \|\omega_{\not=}(0)\|_{L^2}^2+\nu^{-1} \Vert e^{c'\nu^{\frac{1}{2}}t}f_{\not=}\Vert_{L^2L^2}^2.
\end{array}
\end{equation}
It remains to estimate the nonlinear term $f_{\not=}$.
Recall that $(\partial_y^2-|k|^2)\varphi=\widehat{\omega}, \widehat{u}=(\partial_y\varphi,-ik\varphi)$ and
\begin{equation}\nonumber
\begin{aligned}
\|u_{\not=}\|_{L^\infty}^2\thicksim\sum_{k\not=0}\|\widehat{u}(t;k,\cdot)\|_{L^\infty}^2,
\end{aligned}
\end{equation}
and
\begin{equation}\nonumber
\begin{aligned}
\|\widehat{u}(t;k,\cdot)\|_{L^\infty}^2\leq & \|\widehat{u}(t;k,\cdot)\|_{L^2}\|\widehat{u}(t;k,\cdot)\|_{H^1}\\
\leq&(\|\partial_y\varphi\|_{L^2}^2+|k|^2\|\varphi\|_{L^2}^2)^{\frac{1}{2}}
(\|\partial_y^2\varphi\|_{L^2}^2+2|k|^2\|\partial_y\varphi\|_{L^2}^2+|k|^4\|\varphi\|_{L^2}^2)^{\frac{1}{2}}\\
\lesssim&\|\widehat{\omega}(t;k,\cdot)\|_{L^2}^2,
\end{aligned}
\end{equation}
therefore, one has
$$\|u_{\not=}\|_{L^\infty}\lesssim \|\omega_{\not=}\|_{L^2}.$$
Note that
$$(u\omega)_{\not=}=\overline{u}\omega_{\not=}+u_{\not=}\overline{\omega}+(u_{\not=}\omega_{\not=})_{\not=},$$
then we obtain that
\begin{equation}\nonumber
\begin{aligned}
 \Vert e^{c'\nu^{\frac{1}{2}}t}f_{\not=}\Vert_{L^2L^2}^2 \leq &\Vert e^{c'\nu^{\frac{1}{2}}t}\overline{u}\omega_{\not=}\Vert_{L^2L^2}^2+\Vert e^{c'\nu^{\frac{1}{2}}t}u_{\not=}\overline{\omega}\Vert_{L^2L^2}^2+
 \Vert e^{c'\nu^{\frac{1}{2}}t}(u_{\not=}\omega_{\not=})_{\not=}\Vert_{L^2L^2}^2\\
 \lesssim&\|\overline{u}\|_{L^\infty L^\infty}^2\|e^{c'\nu^{\frac{1}{2}}t}\omega_{\not=}\Vert_{L^2L^2}^2
 +\|\overline{\omega}\|_{L^\infty L^2}^2\|e^{c'\nu^{\frac{1}{2}}t}u_{\not=}\Vert_{L^2L^2}^2
 \\
 &+\|\omega_{\not=}\|_{L^\infty L^2}^2\|e^{c'\nu^{\frac{1}{2}}t}u_{\not=}\Vert_{L^2L^2}^2\\
 \lesssim& \|\omega_0\|_{L^2}^2\nu^{-\frac{1}{2}}\Vert \omega_{\not=}\Vert_{X_{c'}}^2.
\end{aligned}
\end{equation}
Moreover, we have
\begin{equation}\nonumber
\begin{aligned}
\Vert\omega_{\not=}\Vert_{X_{c'}}^2\lesssim  \|\omega_{\not=}(0)\|_{L^2}^2+\|\omega_0\|_{L^2}^2\nu^{-\frac{3}{2}}\Vert \omega_{\not=}\Vert_{X_{c'}}^2.
\end{aligned}
\end{equation}
Therefore, if $\|\omega_0\|_{L^2}\leq c_0\nu^{\frac{3}{4}}$ for some small $c_0>0$, we deduce that
\begin{equation}\nonumber
\begin{aligned}
\Vert\omega_{\not=}\Vert_{X_{c'}}^2 \leq C\|\omega_{\not=}(0)\|_{L^2}^2,
\end{aligned}
\end{equation}
then the Theorem \ref{mainresult1} follows from a continuity argument.
\end{proof}

\section*{Acknowledgment}
The authors thank to Prof. Zhifei Zhang for suggesting this problem and many valuable discussions. Ding is supported by NSFC (11371152, 11571117, 11871005 and 11771155) and NSF of Guangdong (2017A030313003). Part of this work was conducted during Lin visited the School of Mathematical Science at Peking University, Lin would like to thank the school for their hospitality.

\end{document}